\newtheorem{thm}{Theorem}[section]
\newtheorem*{thm1}{Theorem A}
\newtheorem{definition}{Definition}[section]
\newtheorem*{remark}{Remark}
\newtheorem{lemma}[thm]{Lemma}
\newtheorem{proposition}[thm]{Proposition}
\begin{document}\large{
\title{Determining system poles using row sequences of orthogonal Hermite-Pad\'e approximants}
\author{N. Bosuwan\thanks{The research of N. Bosuwan was supported by the research fund for DPST graduate with first placement from the Institute for the Promotion of Teaching Science and Technology (IPST 001/2557) and Faculty of Science, Mahidol University.}\,\,\,\footnote{Corresponding author.} and  G. L\'opez Lagomasino\thanks{The research of G. L\'opez Lagomasino was supported by research grant MTM2015-65888-C4-2-P from Ministerio de Econom\'ia y Competitividad}}}
\maketitle

\begin{abstract} Given a system of functions $\textup{\textbf{F}}=(F_1,\ldots,F_d),$ analytic on a neighborhood of some compact subset $E$ of the complex plane with simply connected complement, we define a sequence of vector rational functions with common denominator in terms of the orthogonal expansions of the components $F_k, k=1,\ldots,d,$ with respect to a sequence of orthonormal polynomials associated with a  measure $\mu$ whose support is contained in $E$. Such sequences of vector rational functions resemble row sequences of type II Hermite-Pad\'e approximants. Under appropriate assumptions on $\mu,$ we give necessary and sufficient conditions for the convergence with geometric rate of the common denominators of the sequence of vector rational functions so constructed. The exact rate of convergence of these denominators is provided and  the rate of convergence of the simultaneous approximants is estimated. It is shown that the common denominator of the approximants detect the location of the poles of the system of functions.
\end{abstract}

\vspace{0,7cm}

\noindent
{\bf Keywords:} Montessus de Ballore Theorem $\cdot$ Orthogonal expansions $\cdot$ Simultaneous approximation $\cdot$
Hermite-Pad\'e approximation $\cdot$ Rate of convergence $\cdot$ Inverse results

\vspace{0,5cm}\noindent
{\bf Mathematics Subject Classification (2010):} Primary 30E10 $\cdot$ 41A21 $\cdot$ 41A28 $\cdot$ Secondary 41A25 $\cdot$ 41A27

\section{Introduction}

Let $\textup{\textbf{F}}=(F_1,\ldots,F_d)$ be a system of $d$ formal or convergent Taylor expansions about the origin; that is, for each $i=1,\ldots,d,$ we have
\begin{equation}\label{Taylor}
F_i:=\sum_{n=0}^{\infty} f_{n,i} z^n, \quad \quad f_{n,i}\in \mathbb{C}.
\end{equation}

The following construction can be traced back to classical works of Ch. Hermite \cite{Her} and K. Mahler \cite{Mah}.

\begin{definition}\label{classical}\textup{
Let $\textup{\textbf{F}}=(F_1,\ldots,F_d)$ be a system of $d$ formal Taylor expansions as in \eqref{Taylor}. Fix a multi-index $\textup{\textbf{m}}=(m_1,\ldots,m_d)\in \mathbb{N}^d.$ Set
$$|\textup{\textbf{m}}|:=m_1+m_2+\ldots+m_d.$$ Then, for each $n \geq \max\{m_1,\ldots,m_d\},$ there exist polynomials $Q,$ $P_{i},$ $i=1,\ldots,d,$ such that
$$
\deg P_{i}\leq n-m_i, \quad i=1,\ldots,d, \quad \quad  \deg(Q)\leq |\textup{\textbf{m}}|, \quad Q\not\equiv 0,
$$
\begin{equation}\label{analogy}
Q(z)F_i(z)-P_i(z)=A_i z^{n+1}+\cdots.
\end{equation}
The vector of rational functions $\textup{\textbf{R}}_{n,\textup{\textbf{m}}}:= (P_1/Q,\ldots,P_d/Q)$ is called an \emph{$(n,\textup{\textbf{m}})$ (type II) Hermite-Pad\'e approximant of $\textup{\textbf{F}}.$}
}
\end{definition}

Alternatively, one can solve the following problem
\begin{proposition}\label{classical1}\textup{
Given $\textup{\textbf{F}}=(F_1,\ldots,F_d)$  and  $\textup{\textbf{m}}=(m_1,\ldots,m_d)\in \mathbb{N}^d,$ find polynomials $Q,$ $P_{k,i},$ $k=0,1,\ldots,m_i-1,$ $i=1,\ldots,d,$ such that for all $i=1,2,\ldots,d,$
$$
\deg P_{k,i}\leq n-1, \quad k=0,\ldots,m_i-1, \quad \quad  \deg(Q)\leq |\textup{\textbf{m}}|, \quad Q\not\equiv 0,
$$
\begin{equation}\label{analogy1}
Q(z)z^kF_i(z)-P_{k,i}(z)=A_i z^{n+1}+\cdots, \qquad k=0,\ldots,m_i-1.
\end{equation}
}
\end{proposition}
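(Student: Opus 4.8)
The plan is to recast the requirements in \eqref{analogy1} as a homogeneous system of linear equations in the coefficients of $Q$, and then invoke the elementary fact that such a system has a nontrivial solution as soon as the number of unknowns exceeds the number of equations.

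Write $Q(z)=\sum_{j=0}^{|\textbf{m}|}q_jz^j$, so that $Q$ is encoded by the $|\textbf{m}|+1$ scalars $q_0,\dots,q_{|\textbf{m}|}$. Fix $i\in\{1,\dots,d\}$ and $k\in\{0,\dots,m_i-1\}$, and let $Q(z)z^kF_i(z)=\sum_{\ell\ge 0}c_{\ell,k,i}z^\ell$ be the corresponding formal power series; by \eqref{Taylor} each $c_{\ell,k,i}$ is a linear form in $q_0,\dots,q_{|\textbf{m}|}$ whose coefficients are built from the Taylor data $f_{n,i}$. If $Q$ is prescribed, the two requirements $\deg P_{k,i}\le n-1$ and $Q(z)z^kF_i(z)-P_{k,i}(z)=A_iz^{n+1}+\cdots$ force $P_{k,i}$ to be the degree-$(n-1)$ truncation $P_{k,i}(z)=\sum_{\ell=0}^{n-1}c_{\ell,k,i}z^\ell$ of $Qz^kF_i$, since this is the only polynomial of degree at most $n-1$ that cancels the coefficients of $z^0,\dots,z^{n-1}$ on the left-hand side of \eqref{analogy1}. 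With that choice, \eqref{analogy1} holds if and only if the coefficient of $z^n$ also vanishes, that is, $c_{n,k,i}=0$.

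Imposing $c_{n,k,i}=0$ for every admissible pair $(k,i)$, i.e.\ for $1\le i\le d$ and $0\le k\le m_i-1$, amounts to exactly $\sum_{i=1}^d m_i=|\textbf{m}|$ homogeneous linear equations in the $|\textbf{m}|+1$ unknowns $q_0,\dots,q_{|\textbf{m}|}$. Since the number of equations is strictly smaller than the number of unknowns, there is a solution $(q_0,\dots,q_{|\textbf{m}|})\ne(0,\dots,0)$; the associated polynomial satisfies $\deg Q\le|\textbf{m}|$ and $Q\not\equiv 0$, and defining each $P_{k,i}$ by the truncation formula above completes the construction.

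I do not expect a real obstacle here — the argument is routine linear algebra once the dictionary is set up. The one point deserving attention is checking that the analytic condition ``$=A_iz^{n+1}+\cdots$'' combined with ``$\deg P_{k,i}\le n-1$'' contributes precisely one scalar equation per pair $(k,i)$: the degree bound absorbs exactly the first $n$ Taylor coefficients and the coefficient of $z^n$ is the single remaining constraint. Implicitly one wants $n$ large enough, say $n\ge\max\{m_1,\dots,m_d\}$ as in Definition \ref{classical}, for all the stated degree inequalities to be meaningful. As an alternative, one may simply observe that every $(n,\textbf{m})$ Hermite--Pad\'e approximant $(Q,P_1,\dots,P_d)$ furnished by Definition \ref{classical} already yields a solution of the present problem on setting $P_{k,i}:=z^kP_i$, because then $\deg(z^kP_i)\le k+(n-m_i)\le n-1$ and $z^k\bigl(Q(z)F_i(z)-P_i(z)\bigr)=A_iz^{n+1+k}+\cdots$, which is of the required form.
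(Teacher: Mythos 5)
Your argument is correct and matches the paper's (very terse) treatment: the paper likewise reduces the problem to the homogeneous linear system given by the vanishing of the coefficient of $z^n$ in each $Qz^kF_i$ (equivalently, the same system as in Definition \ref{classical}), which has $|\textup{\textbf{m}}|$ equations in the $|\textup{\textbf{m}}|+1$ coefficients of $Q$ and hence a nontrivial solution, with each $P_{k,i}$ then uniquely determined as the truncation of $Qz^kF_i$. Your closing observation that a solution of Definition \ref{classical} yields one of Proposition \ref{classical1} via $P_{k,i}:=z^kP_i$ is a correct additional consistency check.
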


It is easy to verify that the system of homogeneous linear equations to be solved in order to find the polynomial $Q$ in \eqref{analogy} and \eqref{analogy1} is the same. Once $Q$ is found the polynomial $P_{i}$ in Definition \ref{classical} and $P_{k,i}$ in Proposition \ref{classical1} are uniquely determined. In this sense, Definition \ref{classical} and Proposition \ref{classical1} solve the same problem. However, we wish to point out that different solutions for $Q$ may produce, in general, different $(n,\textup{\textbf{m}})$ Hermite-Pad\'e approximants of ${\bf F}$. In the sequel, given $(n,\textup{\textbf{m}}),$ one particular solution is taken. For that solution, we write
$$\textup{\textbf{R}}_{n,\textup{\textbf{m}}}:=(R_{n,\textup{\textbf{m}},1},\ldots,R_{n,\textup{\textbf{m}},d})=(P_{n,\textup{\textbf{m}},1},\ldots,P_{n,\textup{\textbf{m}},d})/Q_{n,|\textup{\textbf{m}}|},$$
when $Q_{n,|\textup{\textbf{m}}|}$ is monic and has no common zero simultaneously with all the $P_{n,\textup{\textbf{m}},i}.$

Most papers devoted to Hermite-Pad\'e approximation deal with diagonal or near diagonal sequences (when $|{\bf m}| \approx n$) and their application in several areas (such as multiple orthogonal polynomials, number theory, random matrices, brownian motions, Toda lattices, to name a few). Less attention has been paid to the theory related with row sequences, when $\bf m$ remains fixed independent of $n$.

The first significant contribution on the convergence of row sequences of Hermite-Pad\'e approximation is due to Graves-Morris and Saff \cite{MorrisSaff} (see also \cite{GS2} and \cite{GS3})  where a Montessus de Ballore type theorem \cite{Mon} was proved, under the assumption of polewise independence of the system of fucntions. This concept was introduced by the authors in the same paper. Recently, Cacoq, de la Calle, and L\'opez \cite{CacoqYsernLopezIncom} improved that result in several directions; namely, improving the estimate on the rate of convergence and weakening the assumption of polewise independence. In its final form, in \cite[Theorem 1.4 and Theorem 3.7]{CacoqYsernLopez} the authors prove an analogue of the Montessus de Ballore-Gonchar theorem. To state that result, we need to introduce  some concepts and notation.

Let $\boldsymbol \Omega:=(\Omega_1,\ldots,\Omega_d)$ be a system of domains such that, for each $i=1,\ldots,d,$ $F_i$ is meromorphic in $\Omega_i.$ We say that the point $\xi\in \Omega_i$ is a pole of $\textup{\textbf{F}}$ in $\boldsymbol \Omega$ of order $\tau$ if there exists an index $i\in \{1,\ldots,d\}$ such that $\xi \in \Omega_i$ and it is a pole of $F_i$ of order $\tau,$ and for $j\not=i$ either $\xi$ is a pole of $F_j$ of order less than or equal to $\tau$ or $\xi\not\in \Omega_j.$ When $\boldsymbol \Omega=(\Omega,\ldots,\Omega),$ we say that $\xi$ is a pole of $\textup{\textbf{F}}$ in $\Omega.$

Denote by $$\mathbb{B}_{R}:=\{z\in \mathbb{C}:|z|<R\}$$ the disk centered at the origin of radius $R.$
Let $R_{0}(\textup{\textbf{F}})$ be the radius of the largest disk $\mathbb{B}_{R_{0}(\textup{\textbf{F}})}$ to which all the expansions $F_i,$ $i=1,\ldots,d$ can be extended analytically. If $R_{0}(\textup{\textbf{F}})=0,$ we take $\mathbb{B}_{R_m(\textup{\textbf{F}})}=\emptyset,$ $m\geq 0;$ otherwise, $R_{m}(\textup{\textbf{F}})$ is the radius of the largest disk $\mathbb{B}_{R_{m}(\textup{\textbf{F}})}$ centered at the origin to which all the analytic elements $(F_i, \mathbb{B}_{R_{0}(F_i)})$ can be extended so that $\textup{\textbf{F}}$ has at most $m$ poles counting multiplicities. Denote by  $\mathbb{N}$   the set of all positive integers.

Let us define the concept of system pole for a vector of $d$ formal Taylor expansions  $\textup{\textbf{F}}$ as in \eqref{Taylor}.
\begin{definition}\label{SP} \textup{ Given  $\textup{\textbf{F}}=(F_1,F_2,\ldots,F_d)$  and $\textup{\textbf{m}}=(m_1,m_2,\ldots,m_d)\in \mathbb{N}^d $, we say that $\xi\in \mathbb{C}\setminus\{0\}$ is a \emph{system pole of order $\tau$ of $\textup{\textbf{F}}$ with respect to $\textup{\textbf{m}}$} if $\tau$ is the largest positive integer such that for each $t=1,2,\ldots,\tau,$ there exists at least one polynomial combination of the form
\begin{equation}\label{polycom11}
\sum_{i=1}^d v_i F_i,\quad \quad \deg v_i<m_i,\quad \quad i=1,2,\ldots,d,
\end{equation}
which is holomorphic on a neighborhood of $\overline{\mathbb{B}}_{|\xi|}$ except for a pole at $z=\xi$ of exact order $t.$ }
\end{definition}

To each system pole $\xi$ of $\textup{\textbf{F}}$ with respect to $\textup{\textbf{m}}$ we associate several characteristic values. Let $\tau$ be the order of $\xi$ as a system pole of $\textup{\textbf{F}}$. For each $t = 1,\ldots, \tau,$ denote by $r_{\xi,t}(\textup{\textbf{F}},\textup{\textbf{m}})$ the largest of all the numbers $R_{t}(g)$ (the radius of the largest disk containing at most $t$ poles of $g$), where $g$ is a polynomial combination of type \eqref{polycom11} that is analytic on a neighborhood of $\mathbb{B}_{|\xi|}$ except for a pole at $z = \xi$ of order $t$. Then,
$$R_{\xi,t}(\textup{{\textbf{F}}},\textup{\textbf{m}}):=\min_{k=1,\ldots,t} r_{\xi,k} (\textup{\textbf{F}},\textup{\textbf{m}}),$$
$$R_{\xi}(\textup{\textbf{F}},\textup{\textbf{m}}):=R_{\xi,\tau}(\textup{{\textbf{F}}},\textup{\textbf{m}})=\min_{k=1,\ldots,\tau} r_{\xi,k} (\textup{\textbf{F}},\textup{\textbf{m}}).$$

By $Q_{\textup{\textbf{m}}}^{\textup{\textbf{F}}},$ we denote the monic polynomial whose zeros are the system poles of $\textup{\textbf{F}}$ with respect to $\textup{\textbf{m}}$ taking account of their order. The set of distinct zeros of $Q_{\textup{\textbf{m}}}^{\textup{\textbf{F}}}$ is denoted by $\mathcal{P}(\textup{\textbf{F}},\textup{\textbf{m}}).$

The following theorem (see  \cite[Theorem 1.4 and Theorem 3.7]{CacoqYsernLopez}) is an analogue of the Montessus de Ballore-Gonchar theorem.
\begin{thm1}
Let $\textup{\textbf{F}}$ be a vector of formal Taylor expansions at the origin and fix a multi-index $\textup{\textbf{m}}\in \mathbb{N}^d.$ The following two assertions are equivalent:
\begin{enumerate}
\item [(a)] $R_{0}(\textup{\textbf{F}})>0$ and $\textup{\textbf{F}}$ has exactly $|\textup{\textbf{m}}|$ system poles with respect to $\textup{\textbf{m}}$ counting multiplicities.
\item [(b)] The denominators $Q_{n,|\textup{\textbf{m}}|},$ $n \geq |\textup{\textbf{m}}|,$ of the Hermite-Pad\'e approximants of $\textup{\textbf{F}}$ are uniquely determined for all sufficiently large $n,$ and there exists a polynomial $Q_{|\textup{\textbf{m}}|}$ of degree $|\textup{\textbf{m}}|,$ $Q_{|\textup{\textbf{m}}|}(0)\not=0,$ such that
$$\limsup_{n \rightarrow \infty} \|Q_{|\textup{\textbf{m}}|}-Q_{n,|\textup{\textbf{m}}|}\|^{1/n}=\theta<1,$$
where $\|\cdot\|$ denotes the coefficient norm in the space of polynomials.
\end{enumerate}
Moreover, if either (a) or (b) takes place, then $Q_{|\textup{\textbf{m}}|}\equiv Q_{\textup{\textbf{m}}}^{\textup{\textbf{F}}},$ and
$$\theta=\max\left\{ \frac{|\xi|}{R_{\xi}(\textup{\textbf{F}},\textup{\textbf{m}})} :  \mathcal{P}(\textup{\textbf{F}},\textup{\textbf{m}}) \right\}.$$
 \end{thm1}

An exact expression for the rate and region of convergence of $R_{n,\textup{\textbf{m}},i}$ to $F_i$ as $n \to \infty$ is also given, see \cite[Theorem 3.7]{CacoqYsernLopez}.

The object of this paper is to give a similar result when Taylor expansions are replaced by orthogonal ones in the sense we will describe below.

Let $E$ be an infinite compact subset of the complex plane $\mathbb{C}$ such that $\overline{\mathbb{C}}\setminus E$ is simply connected.
Let $\mu$ be a finite positive Borel measure with infinite support $\mbox{supp}(\mu)$ contained in $E$. We write $\mu \in \mathcal{M}(E)$ and define the associated inner product,
$$\langle g,h \rangle_{\mu}:=\int g(\zeta) \overline{h(\zeta)} d\mu(\zeta), \quad g,h \in L_2(\mu).$$
Let
$$p_{n}(z):=\kappa_n z^n+\cdots, \quad  \kappa_n>0,\quad n=0,1,2,\ldots,$$ be the orthonormal polynomial of degree $n$ with respect to $\mu$ with positive leading coefficient; that is, $\langle p_n, p_m \rangle_{\mu}=\delta_{n,m}.$  Denote by $\mathcal{H}(E)$ the space of all functions holomorphic in some neighborhood of $E.$ We define
$$\mathcal{H}(E)^d:=\{(F_1,F_2,\ldots,F_d):  F_i\in \mathcal{H}(E),  i=1,2,\ldots,d \}.$$

A natural way of extending the notion of Hermite-Pad\'e approximation as given by Definition \ref{classical} to the present setting is the following.

\begin{definition}\label{simu11111}\textup{ Let $\textup{\textbf{F}}=(F_1,F_2,\ldots,F_d)\in \mathcal{H}(E)^d$  and $\mu\in \mathcal{M}(E).$ Fix a multi-index $\textup{\textbf{m}}=(m_1,m_2,\ldots,m_d)\in \mathbb{N}^d.$  Set $|\textup{\textbf{m}}|=m_1+m_2+\ldots+m_d.$ Then, for each $n\geq \max\{m_1,m_2,\ldots,m_d\},$ there exist polynomials $\tilde{Q}_{n,\textup{\textbf{m}}}^{\mu}$ and $ \tilde{P}_{n,\textup{\textbf{m}},i}^{\mu},$ $i=1,2,\ldots,d$ such that
$$\deg(\tilde{P}_{n,\textup{\textbf{m}},i}^{\mu})\leq n-m_i, \quad \deg(\tilde{Q}_{n,|\textup{\textbf{m}}|}^{\mu})\leq |\textup{\textbf{m}}|, \quad \tilde{Q}_{n,|\textup{\textbf{m}}|}^{\mu}\not\equiv 0,$$
$$\langle \tilde{Q}_{n,|\textup{\textbf{m}}|}^{\mu} F_i-\tilde{P}_{n,\textup{\textbf{m}},i}^{\mu},\, p_{j} \rangle_{\mu}=0, \quad  j=0,1,\ldots,n,$$
for all $i=1,2,\ldots,d.$
The vector rational function
$$\tilde{\textup{\textbf{ R}}}_{n,\textup{\textbf{m}}}^{\mu}:=(\tilde{R}_{n,\textup{\textbf{m}},1}^{\mu}, \ldots,\tilde{R}_{n,\textup{\textbf{m}},d}^{\mu})=(\tilde{P}_{n,\textup{\textbf{m}},1}^{\mu}/\tilde{Q}_{n,|\textup{\textbf{m}}|}^{\mu} ,\ldots,\tilde{P}_{n,\textup{\textbf{m}},d}^{\mu}/\tilde{Q}_{n,|\textup{\textbf{m}}|}^{\mu})$$ is called an \emph{$(n, \textup{\textbf{m}})$  simultaneous Fourier-Pad\'e approximant of $\textup{\textbf{F}}$ with respect to $\mu.$}}
\end{definition}

The convergence of simultaneous Fourier-Pad\'e approximation was first investigated in  \cite{CacoqLopez} for the case when $E=\{z\in \mathbb{C}:|z|\leq 1\}$ and the support of  $\mu$ is contained in the unit circle (see also \cite{bosuwan,bosuwan1} for the case when $d=1$ and $E$ is a general compact set). The results obtained in \cite{CacoqLopez} are not very promising for several reasons. The restrictions imposed on the measure are stronger than what is to be expected, the extension to the case of measures supported on more general compact sets of the complex plane does not appear to be very plausible, the authors only obtain direct results ((a) implies (b))  using the assumption of polewise independence introduced in \cite{MorrisSaff} which already in the classical case of Hermite-Pad\'e approximation does not lead to an inverse statement ((b) implies (a)) in the theorem.

It is easy to check, when $E=\{z\in \mathbb{C}:|z|\leq 1\}$ and $d\mu=d\theta/2\pi$ on the unit circle, that the concepts of Hermite-Pad\'e approximation and that of simultaneous Fourier-Pad\'e approximation  with respect to $d\theta/2\pi$ of $\textup{\textbf{F}}$ coincide. By the same token, for this special case the simultaneous Fourier-Pad\'e approximants verify Proposition \ref{classical1}. However, for other measures the analogues of Definition \ref{classical} and Proposition \ref{classical1} lead to different homogeneous linear systems of equations. It turns out, that the correct way to extend the notion of Hermite-Pad\'e approximation to the case of vector orthogonal expansions in order to obtain direct and inverse type results is through Proposition \ref{classical1}. So, we propose the following definition.

\begin{definition}\label{simu}\textup{ Let $\textup{\textbf{F}}=(F_1,F_2,\ldots,F_d)\in \mathcal{H}(E)^d$  and $\mu\in \mathcal{M}(E).$ Fix a multi-index $\textup{\textbf{m}}=(m_1,m_2,\ldots,m_d)\in \mathbb{N}^d$ and  $n\in \mathbb{N}.$ Then, there exist polynomials $Q_{n,|\textup{\textbf{m}}|}^{\mu},$ $P_{n,\textup{\textbf{m}},k,i}^{\mu},$ $k=0,1,\ldots,m_i-1, i=1,2,\ldots,d$ such that for all $ i=1,2,\ldots,d,$
\begin{equation}\label{simu1}
\deg (P_{n,\textup{\textbf{m}},k,i}^{\mu})\leq n-1, \quad k=0,1,\ldots,m_i-1, \quad   \deg(Q_{n,|\textup{\textbf{m}}|}^{\mu})\leq |\textup{\textbf{m}}|, \quad Q_{n,|\textup{\textbf{m}}|}^{\mu}\not\equiv 0,
\end{equation}
\begin{equation}\label{simu2}
\langle Q_{n,|\textup{\textbf{m}}|}^{\mu} z^{k} F_i-P_{n,\textup{\textbf{m}},k,i}^{\mu},\, p_{j} \rangle_{\mu}=0,  \quad \quad k=0,1,\ldots,m_i-1 \quad \quad j=0,1,\ldots,n.
\end{equation}  The vector  rational function
$$\textup{\textbf{R}}^{\mu}_{n,\textup{\textbf{m}}}:=(R_{n,\textup{\textbf{m}},1}^{\mu},\ldots,R_{n,\textup{\textbf{m}},d}^{\mu})=(P_{n,\textup{\textbf{m},0,1}}^{\mu}, \ldots, P_{n,\textup{\textbf{m},0,d}}^{\mu})/Q_{n,|\textup{\textbf{m}}|}^{\mu}$$ is called an \emph{$(n,\textup{\textbf{m}})$ orthogonal Hermite-Pad\'e approximant of $\textup{\textbf{F}}$ with respect to $\mu.$} }
\end{definition}
Clearly,
\begin{equation}
\label{simu3}
\langle Q_{n,|\textup{\textbf{m}}|}^{\mu} z^{k} F_i,\, p_{n} \rangle_{\mu}=0,\quad \quad i=1,\ldots,d,  \quad \quad k=0,1,\ldots,m_i-1.
\end{equation}
Since $Q_{n,|\textup{\textbf{m}}|}^{\mu}\not\equiv 0,$ we normalize it to have leading coefficient equal to $1.$ We call $Q_{n,|\textup{\textbf{m}}|}^{\mu}$ \emph{a denominator of an $(n,\textup{\textbf{m}})$ orthogonal Hermite-Pad\'e approximant of $\textup{\textbf{F}}$ with respect to $\mu$}.

From \eqref{simu1}-\eqref{simu2} it is obvious that the polynomials $P_{n,\textup{\textbf{m}},k,i}^{\mu}$ are uniquely determined once $Q_{n,|\textup{\textbf{m}}|}^{\mu}$ is found as a solution of the homogeneous linear system of $|{\bf m}|$ equations on the $|{\bf m}|+1$ unknown coefficients of $Q_{n,|\textup{\textbf{m}}|}^{\mu}$ resulting from \eqref{simu3}. Therefore, for any pair $(n,\textup{\textbf{m}})\in \mathbb{N}\times \mathbb{N}^d,$ a rational function $\textup{\textbf{R}}^{\mu}_{n,\textup{\textbf{m}}}$ always exists. However, in general, $\textup{\textbf{R}}^{\mu}_{n,\textup{\textbf{m}}}$ may not be unique. In this paper, we will restrict our attention to  orthogonal Hermite-Pad\'e approximants (as in Definition \ref{simu}).

Given $E\subset \mathbb{C}$ with $\overline{C}\setminus E$ simply connected, there exists a unique (exterior) conformal mapping $\Phi$ from $\overline{\mathbb{C}}\setminus {E}$ onto $\overline{\mathbb{C}}\setminus \{w\in \mathbb{C}: |w|\leq 1\}$ satisfying $\Phi(\infty)=\infty$ and $\Phi'(\infty)>0.$
For each $\rho>1,$  we introduce
 $$\Gamma_{\rho}:=\{z\in \mathbb{C}: |\Phi(z)|=\rho\}, \quad \quad \mbox{and} \quad \quad D_{\rho}:=E\cup \{z\in \mathbb{C}: |\Phi(z)|<\rho\},$$
 as the \emph{level curve of index $\rho$} and the \emph{canonical domain of index $\rho$}, respectively.
Let $\rho_0(\textup{\textbf{F}})$ be equal to the index $\rho$ of the largest canonical domain $D_{\rho}$ to which all $F_i,$ $i=1,\ldots,d,$ can be extended as holomorphic functions. Moreover, $\rho_{m}(\textup{\textbf{F}})$ is the index of the largest canonical domain $D_\rho$ to which all $F_i,$ $i=1,\ldots,d$ can be extended so that $\textup{\textbf{F}}$ has at most $m$ poles counting multiplicities.

In analogy with Definition \ref{SP} we give
\begin{definition}\textup{ Given $\textup{\textbf{F}}=(F_1,F_2,\ldots,F_d)\in\mathcal{H}(E)^d$ and $\textup{\textbf{m}}=(m_1,m_2,\ldots,m_d)\in \mathbb{N}^d $, we say that $\xi\in \mathbb{C}$ is a \emph{system pole of order $\tau$ of $\textup{\textbf{F}}$ with respect to $\textup{\textbf{m}}$} if $\tau$ is the largest positive integer such that for each $t=1,2,\ldots,\tau,$ there exists at least one polynomial combination of the form
\begin{equation}\label{polycom}
\sum_{i=1}^d v_i F_i,\quad \quad \deg v_i<m_i,\quad \quad i=1,2,\ldots,d,
\end{equation}
which is holomorphic on a neighborhood of $\overline{D}_{|\Phi(\xi)|}$ except for a pole at $z=\xi$ of exact order $t.$ }
\end{definition}

As above, let $E$ be a compact set such that $\overline{\mathbb{C}} \setminus E$ is simply connected and $\mu \in \mathcal{M}(E)$. Let $p_n(z)$ be the $n$-th orthonormal polynomial of $\mu$ with positive leading coefficient $\kappa_n$. The measure $\mu$ is said to be regular, and we write $\mu \in {\mbox{\bf Reg}}$, if
\[ \lim_{n\to \infty} \kappa_n^{1/n} = \frac{1}{\mbox{cap}(\mbox{supp}(\mu))},\]
where $\mbox{cap}(\mbox{supp}(\mu))$ denotes the logarithmic capacity of $\mbox{supp}(\mu)$ (for the definition of regular measures and its different defining properties see \cite[Theorem 3.1.1]{totik}). We are interested in regular measures for which $\mbox{cap}(\mbox{supp}(\mu)) = \mbox{cap}(E)$ and for them we write $\mu \in {\mbox{\bf Reg}}(E)$. Since $E\subset \mathbb{C}$ is a compact set such that $\overline{\mathbb{C}}\setminus E$ is simply connected, it is well known, see \cite[Theorem 3, p. 314]{Gol}, that
\[\mbox{cap}(E) = 1/|\Phi'(\infty)|,\]
and regularity is equivalent in this case (see again \cite[Theorem 3.1.1]{totik}) to
\begin{equation}
\label{asintlog}
  \lim_{n \to \infty} |p_n(z)|^{1/n} = |\Phi(z)|,
\end{equation}
uniformly inside $\mathbb{C} \setminus \mbox{Co}(E)$, where $\mbox{Co}(E)$ denotes the convex hull of $E$. Here, and in what follows, the phrase ``uniformly inside a domain" means ``uniformly on each compact subset of the indicated domain". Now, if $E$ itself is convex then
\eqref{asintlog} takes place in the complement of $E$. When $E \setminus \mbox{Co}(E) \neq \emptyset,$ there may be $o(n)$ zeros of $p_n$ wandering around this complement which affect the $n$-th root asymptotic in that region. We say that $\mu \in {\mbox{\bf Reg}}_1(E)$ when \eqref{asintlog} takes place uniformly on compact subsets of $\mathbb{C} \setminus E$.

Let us introduce the second type function
\[s_n(z) := \int \frac{\overline{p_n(\zeta)}}{z - \zeta} d\mu(\zeta), \qquad z\in \overline{\mathbb{C}}\setminus \mbox{supp}(\mu)\]
From orthogonality it readily follows that
\[p_n(z)s_n(z) := \int \frac{|p_n(\zeta)|^2}{z - \zeta} d\mu(\zeta).\]
It is easy to check that for any compact subset $K \subset \overline{\mathbb{C}}\setminus \mbox{Co}(E)$ there exist positive constants $C_1(K), C_2(K),$ independent of $n,$ such that
\[C_1(K)  \leq \left|\int \frac{|p_n(\zeta)|^2}{z - \zeta} d\mu(\zeta)\right| \leq C_2(K), \qquad z \in K,\]
so that if $\mu \in {\mbox{\bf Reg}}(E)$ then
\begin{equation}
\label{asintlog2}
  \lim_{n \to \infty} |s_n(z)|^{1/n} = |\Phi(z)|^{-1},
\end{equation}
uniformly inside $\mathbb{C} \setminus \mbox{Co}(E)$. We say that $\mu \in {\mbox{\bf Reg}}_2(E)$ when \eqref{asintlog2} takes place uniformly inside $\mathbb{C} \setminus E$. Consequently, when $E$ is convex then ${\mbox{\bf Reg}}(E) = {\mbox{\bf Reg}}_1(E)={\mbox{\bf Reg}}_2(E)$. When both \eqref{asintlog} and \eqref{asintlog2} hold uniformly inside $\overline{\mathbb{C}} \setminus E$ we write $\mu \in {\mbox{\bf Reg}}_{1,2}(E)$.

Fix $0 \leq m \leq n$. From the extremal properties of monic orthogonal polynomials $p_n = \kappa_n P_n$ in the $L_2$ norm, we have
\[ \frac{1}{\kappa_n^2} = \int |P_n(z)|^2 d\mu(z) \leq \int |z^{m}P_{n-m}(z)|^2 d\mu(z) \leq \|z\|_E^{2m} \int |P_{n-m}(z)|^2 d\mu(z) = \frac{\|z \|_E^{2m}}{\kappa_{n-m}^2}.\]
Whence
\[\frac{\kappa_{n-m}}{\kappa_n} \leq \|z\|^m_E, \qquad n \geq m,\]
where $\|\cdot \|_{E}$ denotes the sup-norm on $E.$
We need an analogous uniform bound with respect to $n$ from below. We say that $\mu \in \mbox{\bf Reg}_{1,2}^m(E)$ if it is in $\mbox{\bf Reg}_{1,2}(E)$ and there exists a positive constant $c$ such that
\begin{equation}
\label{below}
\frac{\kappa_{n-m}}{\kappa_n} \geq c, \qquad n \geq n_0.
\end{equation}

\indent All measures in the complex plane whose orthonormal polynomials verify strong asymptotic  are in $\mbox{\bf Reg}_{1,2}^m(E)$, see \cite{Widom1969}.
Other classes of measures related with ratio asymptotics of orthogonal polynomials, which are contained in $\textup{\textbf{Reg}}_{1,2}^{m}(E)$ may be found in \cite{Be,bosuwan,bosuwan1}. Unfortunately, there are no results of general character, in terms of the analytic properties of the measure, describing the measures in the complex plane whose sequence of orthonormal polynomials have ratio asymptotic, except when $E$ is a segment of the real line, the unit circle, or an arc of the unit circle (see, for example, \cite{Be}, \cite{Rakhmanov}, \cite{Rakhmanov2}).

Let $\tau$ be the order of $\xi$ as a system pole of $\textup{\textbf{F}}.$ For each $t=1,\ldots,\tau,$ denote by $\rho_{\xi,t}(\textup{\textbf{F}},\textup{\textbf{m}})$ the largest of all the numbers $\rho_t(G)$ (the index of the largest canonical domain containing at most $t$ poles of $G$), where $G$ is a polynomial combination of type \eqref{polycom} that is holomorphic on a neighborhood of $\overline{D}_{|\Phi(\xi)|}$ except for a pole at $z=\xi$ of order $t.$ Then, we define
$$\boldsymbol\rho_{\xi,t} (\textup{\textbf{F}}, \textup{\textbf{m}}):=\min_{k=1,\ldots,t} \rho_{\xi, k}(\textup{\textbf{F}}, \textup{\textbf{m}}),$$
$$\boldsymbol\rho_{\xi} (\textup{\textbf{F}}, \textup{\textbf{m}}):=\boldsymbol\rho_{\xi,\tau} (\textup{\textbf{F}}, \textup{\textbf{m}})=\min_{t=1,\ldots, \tau} \rho_{\xi, t} (\textup{\textbf{F}},\textup{\textbf{m}}).$$

Fix $i\in\{1,\ldots,d\}$ and $k\in\{0,1,\ldots,m_i-1\}.$ Let $D_{{i,k}}(\textup{\textbf{F}},\textup{\textbf{m}})$ be the largest canonical domain in which all the poles of $z^kF_{i}$ are system poles of $\textup{\textbf{F}}$ with respect to $\textup{\textbf{m}},$ their order as poles of $z^{k}F_i$ does not exceed their order as system poles, and $z^{k}F_i$ has no other singularity. By $\boldsymbol\rho_{i,k}(\textup{\textbf{F}},\textup{\textbf{m}}),$ we denote the index of this canonical domain. Let $\xi_{1},\ldots,\xi_N$ be the poles of $z^{k}F_i$ in $D_{i,k}(\textup{\textbf{F}},\textup{\textbf{m}}).$  For each $j=1,\ldots,N,$ let $\hat{\tau}_j$ be the order of $\xi_j$ as pole of $z^{k}F_i$ and $\tau_j$ its order as a system pole. By assumption, $\hat{\tau}_j\leq \tau_j.$ Set
$$\boldsymbol\rho_{i,k}^{*}(\textup{\textbf{F}},\textup{\textbf{m}}):=\min\{ \boldsymbol\rho_{i,k}(\textup{\textbf{F}},\textup{\textbf{m}}),\min_{j=1,\ldots,N} \boldsymbol\rho_{\xi_j,\hat{\tau}_j}(\textup{\textbf{F}},\textup{\textbf{m}})\}$$
and let $D_{i,k}^{*}(\textup{\textbf{F}},\textup{\textbf{m}})$ be the canonical domain with this index.

The main result of the paper is the following.

\begin{thm} \label{complete} Let $\textup{\textbf{F}}=(F_1,F_2,\ldots,F_d)\in \mathcal{H}(E)^d,$  $\textup{\textbf{m}}\in \mathbb{N}^d$ be a fixed multi-index, and  $\mu\in \textup{\textbf{Reg}}_{1,2}^{|\textup{\textbf{m}}|}(E).$  Then, the following two assertions are equivalent:
\begin{enumerate}
\item [(a)] $\textup{\textbf{F}}$ has exactly $|\textup{\textbf{m}}|$ system poles with respect to $\textup{\textbf{m}}$ counting multiplicities.
\item [(b)] The polynomials $Q_{n,|\textup{\textbf{m}}|}^{\mu}$ of $\textup{\textbf{F}}$ are uniquely determined for all sufficiently large $n,$ and there exists a polynomial $Q_{|\textup{\textbf{m}}|}$ of degree $|\textup{\textbf{m}}|$ such that
$$\limsup_{n \rightarrow \infty} \|Q_{n,|\textup{\textbf{m}}|}^{\mu}-Q_{|\textup{\textbf{m}}|}\|^{1/n}=\theta<1.$$
\end{enumerate}
Moreover, if either (a) or (b) takes place, then $Q_{|\textup{\textbf{m}}|}=Q_{\textup{\textbf{m}}}^{\textup{\textbf{F}}},$
$$\theta=\max \left\{\frac{|\Phi(\xi)|}{\boldsymbol\rho_{\xi}(\textup{\textbf{F}},\textup{\textbf{m}})} :\xi\in\mathcal{P}(\textup{\textbf{F}},\textup{\textbf{m}})\right\},$$ and
for any compact subset $K$ of $D_{i,0}^{*}(\textup{\textbf{F}},\textup{\textbf{m}})\setminus \mathcal{P}(\textup{\textbf{F}},\textup{\textbf{m}}),$
\begin{equation*}
\limsup_{n \rightarrow \infty} \|R_{n,\textup{\textbf{m}},i}^{\mu}-F_i\|_{K}^{1/n}\leq \frac{\|\Phi\|_K}{\boldsymbol \rho_{i,0}^{*}({\textup{\textbf{F}},\textup{\textbf{m}}})},
\end{equation*}
where $\|\cdot \|_{K}$ denotes the sup-norm on $K$ and if $K\subset E,$ then $\|\Phi\|_K$ is replaced by $1.$
\end{thm}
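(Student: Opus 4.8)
The plan is to follow the now-standard route for Montessus de Ballore–Gonchar type theorems, adapting the argument of Cacoq–Ysern–López (Theorem A) from Taylor expansions to orthogonal expansions, with the regularity class $\mathbf{Reg}_{1,2}^{|\mathbf m|}(E)$ playing the role that controls the $n$-th root asymptotics of $p_n$ and $s_n$. First I would establish the basic linear-algebra/Cauchy-integral apparatus: using the orthogonality relations \eqref{simu3}, rewrite the quantities $\langle Q_{n,|\mathbf m|}^\mu z^k F_i, p_n\rangle_\mu$ as contour integrals over a level curve $\Gamma_\rho$ (with $\rho$ just below $\rho_0(\mathbf F)$ or below the first singularity of the relevant combination), converting each into an expression involving $s_n$ and the analytic continuation of $Q_{n,|\mathbf m|}^\mu z^k F_i$. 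The key estimate, valid for $\mu\in\mathbf{Reg}_{1,2}(E)$, is that $|s_n(z)|^{1/n}\to|\Phi(z)|^{-1}$ uniformly on compacta of $\mathbb C\setminus E$; combined with the lower bound \eqref{below} on $\kappa_{n-m}/\kappa_n$, this yields the quantitative control on the coefficients of $Q_{n,|\mathbf m|}^\mu$ that one needs. This is the technical heart and, I expect, the main obstacle: in the Taylor case one has exact formulas for the coefficients of $Q_{n,|\mathbf m|}$ via Hankel determinants and clean geometric decay, whereas here one must pass through the $s_n$ asymptotics, which only hold at the level of $n$-th roots and only uniformly on compact subsets of $\mathbb C\setminus E$ (hence the need for $\mathbf{Reg}_{1,2}$ rather than merely $\mathbf{Reg}$), and one must carefully handle the $o(n)$ exceptional behaviour that can otherwise occur on $E\setminus\mathrm{Co}(E)$.

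For the direction (a) $\Rightarrow$ (b): assuming $\mathbf F$ has exactly $|\mathbf m|$ system poles counting multiplicity, let $Q_{\mathbf m}^{\mathbf F}$ be the monic polynomial vanishing at them. For each system pole $\xi$ and each order $t\le\tau_\xi$, pick a polynomial combination $g=\sum v_iF_i$, $\deg v_i<m_i$, holomorphic on a neighbourhood of $\overline D_{|\Phi(\xi)|}$ with a pole of exact order $t$ at $\xi$ and extendable to $D_\rho$ with at most $t$ poles for $\rho$ up to $\rho_{\xi,t}(\mathbf F,\mathbf m)$. Plugging these combinations into \eqref{simu3} (which holds simultaneously for all $k<m_i$, hence for the combination $\sum v_i z^k$? — more precisely one uses that \eqref{simu3} is linear in the $v_i$ and forms $\langle Q_{n,|\mathbf m|}^\mu\, g, p_n\rangle_\mu=0$) produces, after the contour-integral manipulation above, a linear system for the coefficients of $Q_{n,|\mathbf m|}^\mu$ whose right-hand side decays geometrically with the predicted rate $\max_\xi |\Phi(\xi)|/\boldsymbol\rho_\xi(\mathbf F,\mathbf m)$. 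Since the limiting system (as $n\to\infty$) is nonsingular precisely because there are exactly $|\mathbf m|$ system poles (so the residue/interpolation conditions at the $\xi$'s with multiplicity pin down a degree-$|\mathbf m|$ monic polynomial uniquely), one concludes $Q_{n,|\mathbf m|}^\mu$ is eventually unique and $\|Q_{n,|\mathbf m|}^\mu-Q_{\mathbf m}^{\mathbf F}\|^{1/n}\to\theta<1$ with the stated $\theta$. The convergence estimate for $R_{n,\mathbf m,i}^\mu-F_i$ on compacta of $D_{i,0}^*(\mathbf F,\mathbf m)\setminus\mathcal P(\mathbf F,\mathbf m)$ then follows by writing $R_{n,\mathbf m,i}^\mu-F_i=(P_{n,\mathbf m,0,i}^\mu-Q_{n,|\mathbf m|}^\mu F_i)/Q_{n,|\mathbf m|}^\mu$, estimating the numerator again via a Cauchy integral over $\Gamma_\rho$ (with $\rho$ near $\boldsymbol\rho_{i,0}^*(\mathbf F,\mathbf m)$) using the $s_n$ bound, and using that the denominator converges to $Q_{\mathbf m}^{\mathbf F}$, which is bounded away from $0$ off $\mathcal P(\mathbf F,\mathbf m)$.

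For the direction (b) $\Rightarrow$ (a): assume $Q_{n,|\mathbf m|}^\mu\to Q_{|\mathbf m|}$, $\deg Q_{|\mathbf m|}=|\mathbf m|$, geometrically with ratio $\theta<1$. First, the geometric convergence of the denominators, fed back into the contour-integral identities for $\langle Q_{n,|\mathbf m|}^\mu z^kF_i,p_n\rangle_\mu$ and the $s_n$ asymptotics, forces the analytic continuation: each $z^kF_i$ (equivalently, each admissible combination $g=\sum v_iF_i$) must be meromorphic in a canonical domain strictly larger than $\overline D_1$ with poles confined to the zeros of $Q_{|\mathbf m|}$, because otherwise the said inner products could not decay geometrically. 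One then shows that every zero $\xi$ of $Q_{|\mathbf m|}$ is in fact a system pole of the full order with which it appears in $Q_{|\mathbf m|}$: for a zero of multiplicity $s$, exhibit, for each $t\le s$, an explicit polynomial combination with a pole of exact order $t$ at $\xi$ — this is done by a dimension count on the space of combinations $\{\sum v_iF_i:\deg v_i<m_i\}$ modulo those holomorphic near $\overline D_{|\Phi(\xi)|}$, using that the approximation conditions \eqref{simu3} already produce enough independent combinations with poles at the zeros of the limit denominator. Counting multiplicities one gets at least $|\mathbf m|$ system poles; that there are exactly $|\mathbf m|$ follows since more than $|\mathbf m|$ would, by the (a) $\Rightarrow$ (b) direction already proved applied to a larger multi-index or by the uniqueness of the limiting linear system, contradict $\deg Q_{|\mathbf m|}=|\mathbf m|$ and the uniqueness of the $Q_{n,|\mathbf m|}^\mu$. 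I expect the delicate point in this direction to be the bookkeeping of orders — matching the multiplicity of a zero of $Q_{|\mathbf m|}$ with the order $\tau$ in Definition of system pole, i.e. producing combinations with poles of \emph{every} intermediate exact order $t=1,\dots,\tau$, not merely order $\tau$ — which is exactly the subtlety that the chain of radii $\boldsymbol\rho_{\xi,t}=\min_{k\le t}\rho_{\xi,k}$ is designed to encode, and which must be tracked through the $n$-th root estimates.
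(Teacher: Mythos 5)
Your overall architecture coincides with the paper's: the direct part via residue calculus between two level curves $\Gamma_{\rho_1},\Gamma_{\rho_2}$, the $n$-th root asymptotics \eqref{asintlog2} of $s_n$, an induction on the order of the pole, and a Hermite-interpolation decomposition of the normalized denominators; the inverse part via a layered elimination of candidate poles and a rank argument. However, there is a genuine gap at the heart of your inverse direction. You justify the meromorphic continuation by saying that each admissible combination $g=\sum v_iF_i$, $\deg v_i<m_i$, ``must be meromorphic in a larger canonical domain with poles confined to the zeros of $Q_{|\textup{\textbf{m}}|}$, because otherwise the said inner products could not decay geometrically.'' But for such combinations $[Q_{n,|\textup{\textbf{m}}|}^{\mu}g]_n=0$ identically by \eqref{simu3}, so there is nothing to decay. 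The actual argument writes $[Q_{|\textup{\textbf{m}}|}g]_n=[(Q_{|\textup{\textbf{m}}|}-Q_{n,|\textup{\textbf{m}}|}^{\mu})g]_n$ and estimates it on $\Gamma_\rho$, which yields $1/\rho_{m^*}\leq \theta/\rho_{m^*}$ --- a contradiction \emph{unless} $\rho_{m^*}=\infty$, i.e.\ unless the combination is entire. Ruling out that degenerate case is precisely where the hypothesis $\mu\in\textup{\textbf{Reg}}_{1,2}^{|\textup{\textbf{m}}|}(E)$ (the lower bound \eqref{below} on $\kappa_{n-|\textup{\textbf{m}}|}/\kappa_n$) is needed: expanding $q_{m^*}F=\sum a_kp_k$ and using the orthogonality relations one derives $|a_N|\leq C\sum_{k>N}|a_k|$, and a separate lemma on such sequences forces $a_N=0$ eventually, i.e.\ the combination would be a polynomial --- which contradicts polynomial independence of $\textup{\textbf{F}}$. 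Your sketch misassigns \eqref{below} to ``quantitative control on the coefficients of $Q_{n,|\textup{\textbf{m}}|}^{\mu}$'' in the direct part, where it plays no role, and omits both the degenerate-case exclusion and the preliminary step that uniqueness plus $\deg Q_{n,|\textup{\textbf{m}}|}^{\mu}=|\textup{\textbf{m}}|$ for large $n$ already implies polynomial independence. Without these, the layered construction cannot even guarantee that the functions produced at each stage have a pole (rather than no singularity at all, or a non-polar one) on the boundary of their domain of holomorphy.

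Two further, smaller omissions. In the direct part you claim the rate $\theta$ equals $\max_\xi|\Phi(\xi)|/\boldsymbol\rho_{\xi}(\textup{\textbf{F}},\textup{\textbf{m}})$ but only argue the upper bound; the equality requires showing convergence cannot be faster, which the paper obtains by choosing an extremal combination $G$ whose boundary of meromorphy carries a singularity that is \emph{not} a system pole, applying Lemma \ref{expan} to identify $\limsup_n|[Q_{\textup{\textbf{m}}}^{\textup{\textbf{F}}}G]_n|^{1/n}$ exactly, and deriving a contradiction from a strictly smaller $\theta$. And your appeal to a ``nonsingular limiting linear system'' to get eventual uniqueness of $Q_{n,|\textup{\textbf{m}}|}^{\mu}$ is underspecified; the paper instead normalizes $\sum_k|\lambda_{n,k}|=1$, shows the coefficient $C_n$ of $Q_{\textup{\textbf{m}}}^{\textup{\textbf{F}}}$ in the Hermite-interpolation decomposition satisfies $\liminf_n|C_n|>0$, and deduces $\deg Q_{n,|\textup{\textbf{m}}|}^{\mu}=|\textup{\textbf{m}}|$ for large $n$, whence uniqueness follows because the difference of two monic solutions would be a solution of smaller degree.
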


Theorem \ref{complete} is a direct consequence of Theorems \ref{thm1.4} and \ref{inverse}. In Theorem \ref{thm1.4} we prove that (a) implies (b) and estimate the rate of convergence of the orthogonal Hermite-Pad\'e approximants. Theorem \ref{inverse} contains the inverse assertion (b) implies (a). This is done in Sections 2 and 3, respectively.

\section{The direct statements}

\subsection{On the convergence of orthogonal expansions}\label{Discussionaboutasymptotics}

First of all, let us discuss some properties of orthogonal polynomial expansions of holomorphic functions.
Let $\mu \in \textup{\textbf{Reg}}_1(E)$. The $n$-th Fourier coefficient of $G\in \mathcal{H}(E)$ with respect to $p_n$ is given by
\begin{equation*}\label{Fourierco}
[G]_n:=\langle G,\,p_n\rangle_\mu =\int G(z) \overline{p_n(z)}d\mu(z).
\end{equation*}
The following lemma (see, e.g., Theorem 6.6.1 in \cite{totik}) is well known but we could not find an appropriate reference in our setting so we sketch a proof.
\begin{lemma}\label{expan} Let $G\in \mathcal{H}(E)$ and $\mu\in \textup{\textbf{Reg}}_1(E)$. Then,
\begin{equation}\label{defofrhomF}
\rho_0(G)=\left(\limsup_{n \rightarrow \infty} |[G]_n|^{1/n} \right)^{-1}.
\end{equation}
Moroever, the series $\sum_{n=0}^{\infty} [G]_n p_n(z)$ converges absolutely and uniformly inside  ${D}_{\rho_{0}(G)}$ to $G(z)$, and diverges pointwise for all $z\in \mathbb{C}\setminus \overline{D_{\rho_0(G)}}.$
\end{lemma}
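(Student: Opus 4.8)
The plan is to obtain \eqref{defofrhomF} from two opposite bounds on $\limsup_n|[G]_n|^{1/n}$ and then to deduce the convergence and divergence claims from it. For the bound $\limsup_n|[G]_n|^{1/n}\le 1/\rho_0(G)$ I would use that $p_n$ is orthogonal in $L_2(\mu)$ to every polynomial of degree $\le n-1$: then $[G]_n=\langle G-q,p_n\rangle_\mu$ for each such $q$, so Cauchy--Schwarz together with $\|p_n\|_{L_2(\mu)}=1$ gives
\[|[G]_n|\le \inf_{\deg q\le n-1}\|G-q\|_{L_2(\mu)}\le \mu(E)^{1/2}\inf_{\deg q\le n-1}\|G-q\|_{E}.\]
Since $G$ is holomorphic on $\overline{D}_\rho$ for every $\rho<\rho_0(G)$, Walsh's theorem on polynomial approximation of holomorphic functions (see \cite{Walsh}) yields $\limsup_n\big(\inf_{\deg q\le n}\|G-q\|_E\big)^{1/n}\le 1/\rho_0(G)$, and the bound follows.

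The opposite bound would rest on an auxiliary convergence fact that I would also reuse later: if $\limsup_n|[G]_n|^{1/n}\le 1/\sigma$ for some $\sigma>1$, then $\sum_n[G]_np_n$ converges absolutely and uniformly on each compact subset $K\subset D_\sigma$, hence to a function holomorphic in $D_\sigma$. To prove it, pick $\rho,\rho'$ with $K\subset\overline{D}_\rho$ and $1<\rho<\rho'<\sigma$; since $\mu\in\textup{\textbf{Reg}}_1(E)$ and $\Gamma_{\rho'}\subset\mathbb{C}\setminus E$, the asymptotics \eqref{asintlog} hold uniformly on $\Gamma_{\rho'}$, so $\max_{\Gamma_{\rho'}}|p_n|^{1/n}\to\rho'$; by the maximum principle applied to $|p_n|$ the same bound propagates to $\overline{D}_\rho$, and letting $\rho'\downarrow\rho$ gives $\limsup_n\|p_n\|_K^{1/n}\le\rho$. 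Combined with the hypothesis on the coefficients, the general term of the series is, on $K$, eventually bounded by $\beta^{\,n}$ for some $\beta<1$, which proves the claim.

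To get $\limsup_n|[G]_n|^{1/n}\ge 1/\rho_0(G)$ I would argue by contradiction: assume it is $<1/\rho_0(G)$ and set $\sigma:=\big(\limsup_n|[G]_n|^{1/n}\big)^{-1}>\rho_0(G)$ (with $\sigma$ taken arbitrarily large if the limit superior is $0$). By the auxiliary fact, $S_N:=\sum_{n=0}^N[G]_np_n$ converges uniformly on compact subsets of $D_\sigma$ to some $\widetilde G$ holomorphic in $D_\sigma$. But $S_N$ is the orthogonal projection of $G$ onto the polynomials of degree $\le N$ in $L_2(\mu)$, so $\|G-S_N\|_{L_2(\mu)}\le\mu(E)^{1/2}\inf_{\deg q\le N}\|G-q\|_E\to0$ by Runge's theorem (recall $\overline{\mathbb{C}}\setminus E$ is connected and $G$ is holomorphic near $E$); thus $S_N\to G$ in $L_2(\mu)$. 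As $\mathrm{supp}(\mu)\subset E\subset D_\sigma$, the uniform convergence also forces $S_N\to\widetilde G$ in $L_2(\mu)$, so $\widetilde G=G$ $\mu$-a.e.; since both are continuous on $\mathrm{supp}(\mu)$ and $\mu$ has full support, $\widetilde G=G$ on the infinite compact set $\mathrm{supp}(\mu)$, and the identity principle on the connected domain $D_{\rho_0(G)}$ forces $\widetilde G\equiv G$ there. Then $\widetilde G$ continues $G$ holomorphically to $D_\sigma\supsetneq D_{\rho_0(G)}$, contradicting the maximality in the definition of $\rho_0(G)$. This establishes \eqref{defofrhomF}.

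For the last two assertions: applying the auxiliary fact with $\sigma=\rho_0(G)$ (legitimate once \eqref{defofrhomF} is known) gives absolute and uniform convergence of $\sum_n[G]_np_n$ on compact subsets of $D_{\rho_0(G)}$ to a holomorphic limit, which is identified with $G$ by the same projection-and-identity-principle argument as above. For divergence, take $z\in\mathbb{C}\setminus\overline{D}_{\rho_0(G)}$, so $|\Phi(z)|>\rho_0(G)$ and $z\notin E$; picking a subsequence $(n_k)$ with $|[G]_{n_k}|^{1/n_k}\to 1/\rho_0(G)$ and using \eqref{asintlog} at $z$, I obtain $|[G]_{n_k}p_{n_k}(z)|^{1/n_k}\to|\Phi(z)|/\rho_0(G)>1$, so the general term of the series does not tend to $0$ and the series diverges at $z$. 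I expect the main obstacle to be the upper bound step, since it is the only one requiring an external input (Walsh's approximation theorem); the remaining delicate points are that \eqref{asintlog} must be used uniformly on compacts of $\mathbb{C}\setminus E$ — not merely of $\mathbb{C}\setminus\mathrm{Co}(E)$ — because $E$ need not be convex, and that the identification $\widetilde G=G$ relies on $\mathrm{supp}(\mu)$ being infinite with $\mu$ of full support.
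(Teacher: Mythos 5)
Your proof is correct and follows essentially the same route as the paper's (sketched) proof: the convergence and divergence assertions come from \eqref{asintlog} exactly as for Taylor series, and the identification of the uniform limit with $G$ uses Mergelyan/Runge approximation, the $L_2(\mu)$-projection property of the partial sums, and the identity principle starting from equality $\mu$-a.e.\ on the infinite set $\mathrm{supp}(\mu)$. The one point where you go beyond the paper's sketch is the explicit upper bound $\limsup_n|[G]_n|^{1/n}\le 1/\rho_0(G)$ via orthogonality, Cauchy--Schwarz and the Bernstein--Walsh theorem on best polynomial approximation; this is the right choice here, since the alternative contour-integral representation of $[G]_n$ through the second type functions (Lemma \ref{usealot1}) would need \eqref{asintlog2} on level curves arbitrarily close to $E$, i.e.\ more than the stated hypothesis $\mu\in\textup{\textbf{Reg}}_1(E)$.
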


\begin{proof}
[Proof of Lemma \ref{expan}]
The absolute and uniform convergence of the series on compact subsets of ${D}_{\rho_{0}(G)}$ is carried out using \eqref{asintlog} in the same way as similar statements for Taylor series. Let $G_1$ be the uniform limit. Obviously, $G_1 \in \mathcal{H}({D}_{\rho_{0}(G)})$. The pointwise divergence in the complement of ${D}_{\rho_{0}(G)}$ is also obtained as for Taylor series.

Since $\mathbb{C} \setminus E$ is connected by Mergelyan's theorem, there exists a sequence of polynomials $(g_n), n \in \mathbb{N}, \deg(p_n) = n$ such that
\[\lim_{n\to \infty} \|G- g_n\|_E = 0,\]
where $\|\cdot\|_E$ denotes the uniform norm on $E$.
Now
\[0 \leq \lim_{n\to \infty}\left(\int |(G - g_n)(x)|^2 d\mu(x)\right)^{1/2} \leq \mu(E)^{1/2}\lim_{n\to\infty}\|G-g_n\|_E = 0.\]
Therefore the partial sums of the Fourier expansion converge to $G$ in $L_2(\mu)$. So there is a subsequence of the partial sums that convergence $\mu$ almost everywhere of $E$ to $G$. Thus, $G = G_1$ $\mu$ almost everywhere and consequently $G\equiv G_1$.
\end{proof}

As a consequence of Lemma \ref{expan},
if  $\mu\in \textup{\textbf{Reg}}_1(E)$ and $\textup{\textbf{F}}=(F_1,F_2,\ldots,F_d)\in \mathcal{H}(E)^d$, then for each $i=1,2,\ldots,d$ and $k=0,1,\ldots,m_i-1$ fixed
\begin{equation}\label{usethisasdef}
z^{k}Q_{n,|\textup{\textbf{m}}|}^{\mu}(z)F_i(z)-P_{n,\textup{\textbf{m}},k,i}^{\mu}(z)=\sum_{\beta=n+1}^{\infty} [z^{k} Q_{n,|\textup{\textbf{m}}|}^{\mu}F_i]_{\beta}\,p_{\beta}(z), \quad \quad z\in D_{\rho_{0}(F_i)},
\end{equation}
and $P_{n,\textup{\textbf{m}},k,i}^{\mu}=\sum_{\beta=0}^{n-1} [ z^{k} Q_{n,|\textup{\textbf{m}}|}^{\mu} F_i]_{\beta}\,p_\beta$ is uniquely determined by $Q_{n,|\textup{\textbf{m}}|}^{\mu}.$

A simple relation used frequently in this paper is contained in
\begin{lemma}\label{usealot1}
Let $G\in \mathcal{H}(E),$ $k\in \mathbb{N}\cup \{0\},$ and $\rho\in(1,\rho_0(G)).$ Then,
\begin{equation}\label{usealot}
[G]_k=\frac{1}{2\pi i} \int_{\Gamma_\rho} G(w) s_k(w) dw.
\end{equation}
where $s_k$ is the $k$-th second type function.
\end{lemma}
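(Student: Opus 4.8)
The plan is to substitute the definition of $s_k$ into the right-hand side, interchange the order of integration by Fubini's theorem, and recognize the resulting inner integral as a Cauchy integral.

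First I would write
$$\frac{1}{2\pi i} \int_{\Gamma_\rho} G(w) s_k(w)\, dw = \frac{1}{2\pi i} \int_{\Gamma_\rho} G(w) \left( \int \frac{\overline{p_k(\zeta)}}{w-\zeta}\, d\mu(\zeta) \right) dw .$$
Since $\rho \in (1,\rho_0(G))$, the function $G$ is holomorphic on a neighborhood of $\overline{D}_\rho$, the level curve $\Gamma_\rho$ is a rectifiable (indeed analytic) Jordan curve, and $\mbox{supp}(\mu) \subset E \subset D_\rho$ lies in the bounded component of $\mathbb{C}\setminus \Gamma_\rho$. Hence there is $\delta>0$ with $|w-\zeta|\geq \delta$ for all $w\in \Gamma_\rho$ and all $\zeta \in \mbox{supp}(\mu)$, so that $(w,\zeta) \mapsto G(w)\overline{p_k(\zeta)}/(w-\zeta)$ is continuous, hence bounded, on the compact set $\Gamma_\rho \times \mbox{supp}(\mu)$. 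As arc-length measure on $\Gamma_\rho$ and $\mu$ are both finite, Fubini's theorem applies and yields
$$\frac{1}{2\pi i} \int_{\Gamma_\rho} G(w) s_k(w)\, dw = \int \overline{p_k(\zeta)} \left( \frac{1}{2\pi i} \int_{\Gamma_\rho} \frac{G(w)}{w-\zeta}\, dw \right) d\mu(\zeta).$$

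With $\Gamma_\rho$ positively oriented, the Cauchy integral formula gives $\frac{1}{2\pi i}\int_{\Gamma_\rho} \frac{G(w)}{w-\zeta}\,dw = G(\zeta)$ for every $\zeta \in \mbox{supp}(\mu)$, since $\zeta$ lies interior to $\Gamma_\rho$ and $G$ is holomorphic there. Substituting back, the right-hand side equals $\int G(\zeta)\overline{p_k(\zeta)}\, d\mu(\zeta) = \langle G, p_k\rangle_\mu = [G]_k$, which is the claim.

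I do not expect a genuine obstacle: the only points requiring attention are verifying the hypotheses of Fubini's theorem (uniform boundedness of the kernel, which follows from $\mbox{supp}(\mu)$ being a positive distance from $\Gamma_\rho$) and fixing the orientation of $\Gamma_\rho$ so that the Cauchy formula carries the correct sign.
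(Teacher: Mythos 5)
Your proof is correct and is essentially the paper's own argument run in the reverse direction: the paper starts from $[G]_k=\langle G,p_k\rangle_\mu$, inserts the Cauchy integral representation of $G$, and applies Fubini, whereas you start from the contour integral, insert the definition of $s_k$, apply Fubini, and finish with Cauchy's formula. The extra care you take in justifying Fubini (positive distance of $\mathrm{supp}(\mu)$ from $\Gamma_\rho$) and the orientation of $\Gamma_\rho$ is sound and only makes explicit what the paper leaves implicit.
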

\begin{proof}[Proof of Lemma \ref{usealot1}]
Let $G\in \mathcal{H}(E),$ $k\in \mathbb{N}\cup \{0\},$ and $\rho\in(1,\rho_0(G)).$ By Cauchy's integral formula and Fubini's theorem, we obtain
$$[G]_k=\langle G,\, p_k \rangle_{\mu}=\int G (z) \overline{p_k(z)} d\mu(z)=\int \frac{1}{2\pi i}\int_{\Gamma_\rho} \frac{G(w)}{w-z} dw \overline{p_k(z)} d\mu(z)$$
$$
=\frac{1}{2\pi i} \int_{\Gamma_\rho} G(w) \int \frac{\overline{p_k(z)}}{w-z} d\mu(z) dw=\frac{1}{2\pi i} \int_{\Gamma_\rho} G(w) s_k(w) dw.
$$
\end{proof}

\subsection{Proof of (a) implies (b)}

 Let $Q_{\textup{\textbf{m}}}^{\textup{\textbf{F}}}$ denote the monic polynomial whose zeros are the system poles of $\textup{\textbf{F}}$ with respect to $\textup{\textbf{m}}$ taking account of their order. The set of distinct zeros of $Q_{\textup{\textbf{m}}}^{\textup{\textbf{F}}}$ is denoted by $\mathcal{P}(\textup{\textbf{F}},\textup{\textbf{m}}).$

We have

\begin{thm}\label{thm1.4} Let $\textup{\textbf{F}}=(F_1,F_2,\ldots,F_d)\in \mathcal{H}(E)^d,$  $\mu\in \textup{\textbf{Reg}}_{1,2}(E),$ and $\textup{\textbf{m}}\in \mathbb{N}^d$ be a fixed multi-index. Suppose that $\textup{\textbf{F}}$ has exactly $|\textup{\textbf{m}}|$ system poles with respect to $\textup{\textbf{m}}$ counting multiplicities. Then, the denominators of the orthogonal Hermite-Pad\'e approximants $Q_{n,|\textup{\textbf{m}}|}^{\mu}$ are uniquely determined for all  sufficiently large $n$ and
\begin{equation}\label{2.5}
\limsup_{n \rightarrow \infty} \|Q_{n,|\textup{\textbf{m}}|}^{\mu}-Q_{\textup{\textbf{m}}}^{\textup{\textbf{F}}}\|^{1/n}=
\max \left\{\frac{|\Phi(\xi)|}{\boldsymbol\rho_{\xi}(\textup{\textbf{F}},\textup{\textbf{m}})} :\xi\in\mathcal{P}(\textup{\textbf{F}},\textup{\textbf{m}})\right\},
\end{equation}
where $\|\cdot\|$ denotes the coefficient norm in the space of polynomials.
Additionally, for each $i=1,\ldots,d,$  $k=0,\ldots,m_i-1,$ and for any compact subset $K$ of $D_{i,k}^{*}(\textup{\textbf{F}},\textup{\textbf{m}})\setminus \mathcal{P}(\textup{\textbf{F}},\textup{\textbf{m}}),$
\begin{equation}\label{approximation}
\limsup_{n \rightarrow \infty} \left\|\frac{P_{n,\textup{\textbf{m}},k,i}^{\mu}}{Q_{n,|\textup{\textbf{m}}|}^{\mu}}-z^{k}F_i\right\|_{K}^{1/n}\leq \frac{\|\Phi\|_K}{\boldsymbol \rho_{i,k}^{*}({\textup{\textbf{F}},\textup{\textbf{m}}})},
\end{equation}
where $\|\cdot \|_{K}$ denotes the sup-norm on $K$ and if $K\subset E,$ then $\|\Phi\|_K$ is replaced by $1.$
\end{thm}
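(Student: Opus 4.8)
The plan is to follow the now-classical strategy for Montessus--de Ballore--Gonchar type results, adapted from the Taylor setting of \cite{CacoqYsernLopez} to orthogonal expansions by systematically replacing powers $z^n$ and Taylor coefficients by orthonormal polynomials $p_n$ and Fourier coefficients $[\,\cdot\,]_n$, and by using the $n$-th root asymptotics \eqref{asintlog} and \eqref{asintlog2} in place of the trivial estimates $|z^n|^{1/n}=|z|$. The key computational device is Lemma \ref{usealot1}, which expresses $[z^kQ_{n}^{\mu}F_i]_\beta$ as a contour integral $\frac{1}{2\pi i}\int_{\Gamma_\rho} z^kQ_{n,|{\bf m}|}^{\mu}(z)G(z)s_\beta(z)\,dz$ for suitable polynomial combinations $G=\sum v_iF_i$; choosing $\Gamma_\rho$ just inside the relevant canonical domain and using \eqref{asintlog2} on $s_\beta$ gives the geometric decay. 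First I would set up the linear algebra: the orthogonality relations \eqref{simu3} say that the $|{\bf m}|+1$ coefficients of $Q_{n,|{\bf m}|}^{\mu}$ satisfy $|{\bf m}|$ homogeneous equations whose entries are the Fourier coefficients $[z^{k+j}F_i]_n$, $0\le k\le m_i-1$, of the shifted functions; I would normalize $\|Q_{n,|{\bf m}|}^{\mu}\|=1$ and extract a convergent subsequence $Q_{n}^{\mu}\to Q$ with $\deg Q\le|{\bf m}|$.

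The heart of the direct statement is to show that every such limit $Q$ equals $Q_{\bf m}^{\bf F}$, i.e.\ its zeros are exactly the $|{\bf m}|$ system poles. For this I would first show that if $\xi$ is a system pole of order $\tau$ then $(z-\xi)^\tau \mid Q$: pick the polynomial combinations $g_t=\sum v_i^{(t)}F_i$ with a pole at $\xi$ of exact order $t$, $t=1,\dots,\tau$, each meromorphic and otherwise holomorphic on a neighborhood of $\overline{D}_{|\Phi(\xi)|}$ and extendable (as a function with $\le t$ poles) up to index $\rho_{\xi,t}$. Taking the corresponding linear combination of the relations \eqref{simu3} over $k$ and $i$ produces $[Q_{n,|{\bf m}|}^{\mu}g_t]_n=0$; writing this via Lemma \ref{usealot1} on a contour $\Gamma_\rho$ with $|\Phi(\xi)|<\rho<\rho_{\xi,t}$, deforming across the pole $\xi$, and using \eqref{asintlog2} to bound the integral over $\Gamma_\rho$ by $(\rho/\rho_{\xi,t})^n\cdot o(1)$-type terms, one is left — in the limit and after dividing by $|p_n(\xi)|$ or its derivatives — with a triangular linear system in the ``boundary values'' $Q(\xi),Q'(\xi),\dots,Q^{(\tau-1)}(\xi)$ forcing them all to vanish; hence $(z-\xi)^\tau\mid Q$. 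Since the total multiplicity of system poles is $|{\bf m}|=\deg Q_{\bf m}^{\bf F}$ by hypothesis (a) and $\deg Q\le |{\bf m}|$, it follows that $Q=Q_{\bf m}^{\bf F}$; as the limit is independent of the subsequence, the whole sequence converges, $Q_{n,|{\bf m}|}^{\mu}\to Q_{\bf m}^{\bf F}$, and for large $n$ the $|{\bf m}|\times(|{\bf m}|+1)$ system has rank $|{\bf m}|$, so $Q_{n,|{\bf m}|}^{\mu}$ is unique.

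Next I would pin down the exact geometric rate. The upper bound $\limsup\|Q_{n,|{\bf m}|}^{\mu}-Q_{\bf m}^{\bf F}\|^{1/n}\le\theta$, with $\theta=\max\{|\Phi(\xi)|/{\boldsymbol\rho}_\xi({\bf F},{\bf m}):\xi\in\mathcal P({\bf F},{\bf m})\}$, comes from solving the (now invertible, for large $n$) linear system by Cramer's rule and estimating each entry $[z^kQ_{n,|{\bf m}|}^{\mu}F_i]_n$ through Lemma \ref{usealot1} and \eqref{asintlog2}, exactly as in \cite[Sec.\ 2]{CacoqYsernLopez}. The matching lower bound is the standard argument: if the rate were strictly smaller than $\theta$ near some $\xi$, then the combination $g$ realizing ${\boldsymbol\rho}_\xi$ could be extended with the right pole structure past its natural boundary, contradicting the definition of ${\boldsymbol\rho}_{\xi,t}$ (a maximality argument). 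Finally, for the approximation estimate \eqref{approximation}, I would use the identity \eqref{usethisasdef}, namely $z^kQ_{n,|{\bf m}|}^{\mu}F_i-P_{n,{\bf m},k,i}^{\mu}=\sum_{\beta>n}[z^kQ_{n,|{\bf m}|}^{\mu}F_i]_\beta\,p_\beta$; on a compact $K\subset D_{i,k}^{*}\setminus\mathcal P({\bf F},{\bf m})$, I estimate the tail via Lemma \ref{usealot1} (contour just inside $D_{i,k}^{*}$) together with \eqref{asintlog} for $p_\beta$ on $K$ and \eqref{asintlog2} for $s_\beta$ on the contour, getting decay like $(\|\Phi\|_K/{\boldsymbol\rho}_{i,k}^{*})^n$; dividing by $Q_{n,|{\bf m}|}^{\mu}\to Q_{\bf m}^{\bf F}$, which is bounded below away from its zeros on $K$, yields \eqref{approximation}.

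The main obstacle I expect is the careful bookkeeping in the triangular-system argument of the second paragraph: one must handle the derivatives $p_n^{(\ell)}(\xi)$ and $s_n^{(\ell)}(\xi)$ and their $n$-th root asymptotics uniformly, control the several polynomial combinations $g_t$ simultaneously (their denominators ${\boldsymbol\rho}_{\xi,t}$ differ, so one takes the minimum, which is exactly why ${\boldsymbol\rho}_\xi=\min_{t}\rho_{\xi,t}$ appears), and — crucially — this is where the lower bound $\kappa_{n-|{\bf m}|}/\kappa_n\ge c$ built into $\textup{\textbf{Reg}}_{1,2}^{|{\bf m}|}(E)$ enters, to keep the coefficients of $z^kQ_{n,|{\bf m}|}^{\mu}$ from shrinking faster than the geometric scale being tracked. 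Everything else is a routine transcription of the Taylor-series proof, with \eqref{asintlog}--\eqref{asintlog2} doing the work that $|z^n|^{1/n}=|z|$ does in the classical case.
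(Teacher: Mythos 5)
Your treatment of the denominators follows the paper's argument essentially step for step: normalize $Q_{n,|\textup{\textbf{m}}|}^{\mu}$, use $[Q_{n,|\textup{\textbf{m}}|}^{\mu}G_t]_n=0$ for the polynomial combinations $G_t$ with a pole of exact order $t$ at $\xi$, deform the contour in Lemma \ref{usealot1} across $\xi$, and read off from the residue (via Leibniz) a triangular system giving $\limsup_n|(Q_{n,|\textup{\textbf{m}}|}^{\mu})^{(j)}(\xi)|^{1/n}\leq |\Phi(\xi)|/\boldsymbol\rho_{\xi,j+1}$; then Hermite interpolation and degree counting give convergence to $Q_{\textup{\textbf{m}}}^{\textup{\textbf{F}}}$, uniqueness for large $n$, and the sharpness argument via the non-system-pole singularity on $\partial D_{\rho_s(G)}$ is exactly the paper's. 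Two small corrections there: the rate in \eqref{2.5} is obtained directly from these derivative estimates plus the interpolation basis, not from Cramer's rule applied to the linear system (which would require delicate determinant estimates and is not what is done); and the condition $\kappa_{n-|\textup{\textbf{m}}|}/\kappa_n\geq c$ from $\textup{\textbf{Reg}}_{1,2}^{|\textup{\textbf{m}}|}(E)$ plays no role in this direct theorem, whose hypothesis is only $\mu\in\textup{\textbf{Reg}}_{1,2}(E)$; that lower bound is needed only in the inverse part (Lemma \ref{mainlemma}).

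The genuine gap is in your proof of \eqref{approximation}. You propose to estimate the tail $\sum_{\beta>n}[z^kQ_{n,|\textup{\textbf{m}}|}^{\mu}F_i]_\beta\,p_\beta$ by applying Lemma \ref{usealot1} on a contour ``just inside $D_{i,k}^{*}$'', but that lemma requires the integrand to be holomorphic up to the contour, and $z^kF_i$ has poles $\tilde\xi_1,\ldots,\tilde\xi_N$ inside $D_{i,k}^{*}$; moreover the expansion \eqref{usethisasdef} itself is only valid in $D_{\rho_0(F_i)}$, so it cannot be summed on a compact $K\subset D_{i,k}^{*}\setminus\mathcal{P}(\textup{\textbf{F}},\textup{\textbf{m}})$ that leaves $D_{\rho_0(F_i)}$. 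The paper repairs both problems by multiplying by $\omega(z)=\prod_j(z-\tilde\xi_j)^{\hat\tau_j}$, re-expanding $\omega\,(Q_{n,|\textup{\textbf{m}}|}^{\mu}z^kF_i-P_{n,\textup{\textbf{m}},k,i}^{\mu})=\sum_\nu b_{\nu,n}^{(i,k)}p_\nu$, and splitting at $\nu=n+|\textup{\textbf{m}}|$. The high-order block can then legitimately use a contour near $\Gamma_{\boldsymbol\rho_{i,k}^{*}}$, but the low-order block requires estimating $a_{\ell,n}^{(i,k)}$ by pushing the contour past the $\tilde\xi_j$ and picking up residues that involve $(Q_{n,|\textup{\textbf{m}}|}^{\mu})^{(u)}(\tilde\xi_j)$; it is precisely the geometric decay of these derivatives (your \eqref{3.31}-type bounds) that produces the factor $(\boldsymbol\rho_{\tilde\xi_j,\hat\tau_j}(\textup{\textbf{F}},\textup{\textbf{m}}))^{-n}$ and explains why the radius in \eqref{approximation} is $\boldsymbol\rho_{i,k}^{*}$, the minimum of $\boldsymbol\rho_{i,k}$ and the $\boldsymbol\rho_{\tilde\xi_j,\hat\tau_j}$, rather than $\boldsymbol\rho_{i,k}$ alone. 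Without the $\omega$-trick and this residue bookkeeping the estimate does not go through, so this step is not a ``routine transcription'' and needs to be written out.
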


\begin{proof}[Proof of Theorem \ref{thm1.4}]
For each $n\in \mathbb{N},$ let  $q_{n,|\textup{\textbf{m}}|}^{\mu}$ be the polynomial $Q_{n,|\textup{\textbf{m}}|}^{\mu}$ normalized so that
\begin{equation}\label{contradict}
\sum_{k=0}^{|\textup{\textbf{m}}|} |\lambda_{n,k}|=1,\quad \quad q_{n,|\textup{\textbf{m}}|}^{\mu}(z)=\sum_{k=0}^{|\textup{\textbf{m}}|} \lambda_{n,k} z^{k}.
\end{equation} This normalization implies that the polynomials $q_{n,|\textup{\textbf{m}}|}^{\mu}$ are uniformly bounded on each compact subset of $\mathbb{C}.$

Let $\xi$ be a system pole of order $\tau$  of $\textup{\textbf{F}}$ with respect to $\textup{\textbf{m}}.$
We wish to show that
\begin{equation}\label{3.31}
\limsup_{n \rightarrow \infty} |(q_{n,|\textup{\textbf{m}}|}^{\mu})^{(j)}(\xi)|^{1/n}\leq \frac{|\Phi(\xi)|}{\boldsymbol \rho_{\xi,j+1}(\textup{\textbf{F}},\textup{\textbf{m}})}, \quad \quad j=0,1,\ldots,\tau-1.
\end{equation}

First, we consider a polynomial combination $G_1$ of type \eqref{polycom} that is holomorphic on a neighborhood of $\overline{D}_{|\Phi(\xi)|}$ except for a simple pole at $z=\xi$ and verifies that $\rho_{1}(G_1)=\boldsymbol \rho_{\xi,1}(\textup{\textbf{F}},\textup{\textbf{m}})(=\rho_{\xi,1}(\textup{\textbf{F}},\textup{\textbf{m}})).$ Then, we have
$$G_1=\sum_{i=1}^{d} v_{i,1} F_i, \quad \quad \deg v_{i,1}<m_i,\quad i=1,2,\ldots,d.$$
Define $$ H_1(z):=(z-\xi)G_1(z) \quad \quad  \textup{and} \quad \quad a_{n,n}^{(1)}:=[q_{n,|\textup{\textbf{m}}|}^{\mu}G_1]_{n}.$$
By the definition of $Q_{n,|\textup{\textbf{m}}|}^{\mu}$, it is easy to check that $a_{n,n}^{(1)}=0.$
Moreover, using \eqref{usealot}, we have
 $$a_{n,n}^{(1)}=[q_{n,|\textup{\textbf{m}}|}^{\mu}G_1]_n=\frac{1}{2\pi i} \int_{\Gamma_{\rho_1}} q_{n,|\textup{\textbf{m}}|}^{\mu}(z)G_1(z) s_n(z)dz,$$
where $1< \rho_1< |\Phi(\xi)|.$
  Define
 $$\tau_{n,n}^{(1)}:=\frac{1}{2\pi i} \int_{\Gamma_{\rho_2}} q_{n,|\textup{\textbf{m}}|}^{\mu}(z)G_1(z) s_n(z)dz,$$
 where $|\Phi(\xi)|<\rho_2< \boldsymbol \rho_{\xi,1}(\textup{\textbf{F}},\textup{\textbf{m}}).$
The function $q_{n,|\textup{\textbf{m}}|}^{\mu}G_1s_{n}$ is meromorphic on $\overline{D_{\rho_2}}\setminus D_{\rho_1} =\{z\in \mathbb{C}: \rho_1 \leq |\Phi(z)| \leq \rho_2 \}$ and has a pole at $\xi$ of order at most $1.$ Applying Cauchy's residue theorem to the function $q_{n,|\textup{\textbf{m}}|}^{\mu} G_1 s_n$, we have
\begin{align}\label{banana25}
&\frac{1}{2 \pi i}\int_{\Gamma_{\rho_2}} q_{n,|\textup{\textbf{m}}|}^{\mu}(t) G_1(t) s_n(t)dt-\frac{1}{2 \pi i} \int_{\Gamma_{\rho_1}}q_{n,|\textup{\textbf{m}}|}^{\mu}(t) G_1(t) s_n(t) dt \notag\\
&= \textup{res}(q_{n,|\textup{\textbf{m}}|}^{\mu} G_1 s_n,\, \xi).
\end{align}
 The limit formula for the residue of $q_{n,|\textup{\textbf{m}}|}^{\mu}G_1 s_n$ at $\xi$ is
$$
\textup{res}(q_{n,|\textup{\textbf{m}}|}^{\mu}F s_n,\, \xi)= \lim_{z \rightarrow \xi} (z-\xi) q_{n,|\textup{\textbf{m}}|}^{\mu}(z) G_1(z) s_n(z)=H_1(\xi) q_{n,|\textup{\textbf{m}}|}^{\mu}(\xi) s_n(\xi).
$$ We can rewrite \eqref{banana25} as
$$
\tau_{n,n}^{(1)}=\tau_{n,n}^{(1)}-a_{n,n}^{(1)}=H_1(\xi) q_{n,|\textup{\textbf{m}}|}^{\mu}(\xi) s_n(\xi)
$$
(recall that $a_{n,n}^{(1)}=0$) which implies
\begin{equation}\label{rewrite}
q_{n,|\textup{\textbf{m}}|}^{\mu}(\xi) =\frac{\tau_{n,n}^{(1)}}{H_1(\xi)s_n(\xi)}.
\end{equation}

Choose $\delta>0$ so small that
\begin{equation}\label{yuhyggt1}
\rho_2:=\boldsymbol \rho_{\xi,1}(\textup{\textbf{F}},\textup{\textbf{m}})-\delta>|\Phi(\xi)|, \quad |\Phi(\xi)|-\delta>1, \quad  \textup{and} \quad
\frac{|\Phi(\xi)|+\delta}{\rho_2-\delta}<1.
\end{equation}
Using  \eqref{asintlog2},   there exist $n_0\in \mathbb{N}$ and $c_1>0,c_2>0$  such that
\begin{equation}\label{asymps}
  \frac{c_1}{(\rho+\delta)^{n}} \leq \|s_n\|_{\Gamma_{\rho}} \leq \frac{c_2}{(\rho-\delta)^n}, \quad \quad n\geq n_0,
\end{equation}
where $c_1$ and $c_2$  do not depend on $n$ (from now on, $c_3,c_4,\ldots$ denote constants that do not depend on $n$). From \eqref{asymps}, we have
\begin{equation}\label{use1}
|\tau_{n,n}^{(1)}|=\left|\frac{1}{2\pi i} \int_{\Gamma_{\rho_2}} q_{n,|\textup{\textbf{m}}|}^{\mu}(z)G_1(z) s_n(z)dz\right|\leq \frac{c_3}{(\rho_2-\delta)^n}
\end{equation}
and
\begin{equation}\label{use2}
|s_n(\xi)|\geq \frac{c_1}{(|\Phi(\xi)|+\delta)^n}.
\end{equation}
Combining \eqref{use1} and \eqref{use2}, it follows from \eqref{rewrite} that
$$|q_{n,|\textup{\textbf{m}}|}^{\mu}(\xi)|\leq c_4 \left( \frac{|\Phi(\xi)|+\delta}{\rho_2-\delta}\right)^n.$$ which means that
$$\limsup_{n \rightarrow \infty}|q_{n,|\textup{\textbf{m}}|}^{\mu}(\xi)|^{1/n} \leq \frac{|\Phi(\xi)|+\delta}{\rho_2-\delta}.$$
Letting $\delta \rightarrow 0,$ we obtain  $\rho_2\rightarrow {\boldsymbol \rho_{\xi,1}(\textup{\textbf{F}},\textup{\textbf{m}})}$ and
$$\limsup_{n \rightarrow \infty}|q_{n,|\textup{\textbf{m}}|}^{\mu}(\xi)|^{1/n} \leq \frac{|\Phi(\xi)|}{\boldsymbol \rho_{\xi,1}(\textup{\textbf{F}},\textup{\textbf{m}})}.$$

Now we employ induction. Suppose that
\begin{equation}\label{useos}
\limsup_{n \rightarrow \infty} |(q_{n,|\textup{\textbf{m}}|}^{\mu})^{(j)}(\xi)|^{1/n}\leq \frac{|\Phi(\xi)|}{\boldsymbol \rho_{\xi,j+1}(\textup{\textbf{F}},\textup{\textbf{m}})}, \quad \quad j=0,1,\ldots, \ell-2,
\end{equation}
(recall that $\boldsymbol \rho_{\xi,j+1}(\textup{\textbf{F}},\textup{\textbf{m}})=\min_{k=1,\ldots,j+1} \rho_{\xi,k}(\textup{\textbf{F}},\textup{\textbf{m}})$), with $\ell\leq \tau,$ and let us prove that the formula \eqref{useos} holds for $j=\ell-1.$

Consider a polynomial combination $G_\ell$ of type \eqref{polycom} that is holomorphic on a neighborhood of $\overline{D}_{|\Phi(\xi)|}$ except for a pole of order $\ell$ at $z=\xi$ and verifies that $\rho_{\ell}(G_\ell)= \rho_{\xi,\ell}(\textup{\textbf{F}},\textup{\textbf{m}}).$ Then, we have
$$G_\ell=\sum_{i=1}^d v_{i,\ell} F_i, \quad \quad \deg v_{i,\ell}< m_i, \quad i=1,2,\ldots,d.$$
Set $$H_\ell(z):=(x-\xi)^{\ell} G_\ell(z) \quad \quad \textup{and}\quad \quad a_{n,n}^{(\ell)}=[q_{n,|\textup{\textbf{m}}|}^{\mu}G_\ell]_n.$$ By the definition of $Q_{n,|\textup{\textbf{m}}|}^{\mu}$, it is easy to check that
$a_{n,n}^{(\ell)}=0.$
 Using \eqref{usealot}, we have
 $$a_{n,n}^{(\ell)}=[q_{n,|\textup{\textbf{m}}|}^{\mu}G_\ell]_n=\frac{1}{2\pi i} \int_{\Gamma_{\rho_1}} q_{n,|\textup{\textbf{m}}|}^{\mu}(z)G_\ell(z) s_n(z)dz,$$
where $1< \rho_1< |\Phi(\xi)|.$ Define
 $$\tau_{n,n}^{(\ell)}=\frac{1}{2\pi i} \int_{\Gamma_{\rho_2}} q_{n,|\textup{\textbf{m}}|}^{\mu}(z)G_\ell(z) s_n(z)dz,$$
 where $|\Phi(\xi)|<\rho_2<\rho_{\xi,\ell}(\textup{\textbf{F}},\textup{\textbf{m}}).$ The function $q_{n,|\textup{\textbf{m}}|}^{\mu}G_\ell s_{n}$ is meromorphic on $\overline{D_{\rho_2}}\setminus D_{\rho_1} =\{z\in \mathbb{C}: \rho_1 \leq |\Phi(z)| \leq \rho_2 \}$ and has a pole at $\xi$ of order at most $\ell.$ Applying Cauchy's residue theorem to the function $q_{n,|\textup{\textbf{m}}|}^{\mu} G_\ell s_n$, we have
$$
\tau_{n,n}^{(\ell)}-a_{n,n}^{(\ell)}=\frac{1}{2 \pi i}\int_{\Gamma_{\rho_2}} q_{n,|\textup{\textbf{m}}|}^{\mu}(t) G_\ell(t) s_n(t)dt-\frac{1}{2 \pi i} \int_{\Gamma_{\rho_1}}q_{n,|\textup{\textbf{m}}|}^{\mu}(t) G_\ell(t) s_n(t) dt
$$
\begin{equation}\label{reduce}
= \textup{res}(q_{n,|\textup{\textbf{m}}|}^{\mu} G_\ell s_n,\, \xi).
\end{equation}
The limit formula for the residue of $q_{n,|\textup{\textbf{m}}|}^{\mu}G_\ell s_n$ at $\xi$ is
$$
\textup{res}(q_{n,|\textup{\textbf{m}}|}^{\mu}G_\ell s_n,\, \xi)=\frac{1}{(\ell-1)!} \lim_{z \rightarrow \xi} ((z-\xi)^{\ell} G_\ell(z)  s_n(z) q_{n,|\textup{\textbf{m}}|}^{\mu}(z))^{(\ell-1)}
$$
\begin{equation}\label{reduce111111}
=\frac{1}{(\ell-1)!} \sum_{t=0}^{\ell-1} {\ell-1 \choose t }  (H_\ell s_n)^{(\ell-1-t)}(\xi) (q_{n,|\textup{\textbf{m}}|}^{\mu})^{(t)}(\xi),
\end{equation}
where the last equality follows from Leibniz's formula.
Since $a_{n,n}^{(\ell)}=0,$ the equation \eqref{reduce} becomes
$$(\ell-1)!\tau_{n,n}^{(\ell)}= \sum_{t=0}^{\ell-2} {\ell-1 \choose t } (H_\ell s_n)^{(\ell-1-t)}(\xi) (q_{n,|\textup{\textbf{m}}|}^{\mu})^{(t)}(\xi)+H_\ell(\xi)s_n(\xi) (q_{n,|\textup{\textbf{m}}|}^{\mu})^{(\ell-1)}(\xi),
$$
which implies that
\begin{equation}\label{use678}
(q_{n,|\textup{\textbf{m}}|}^{\mu})^{(\ell-1)}(\xi)=\frac{(\ell-1)!\tau_{n,n}^{(\ell)}}{H_\ell(\xi)s_n(\xi) }-\sum_{t=0}^{\ell-2} {\ell-1 \choose t } \frac{(H_\ell s_n)^{(\ell-1-t)}(\xi) (q_{n,|\textup{\textbf{m}}|}^{\mu})^{(t)}(\xi)}{H_\ell(\xi)s_n(\xi) }.
\end{equation}

Choose $\delta>0$ sufficiently small so that
\begin{equation}\label{yuhyggt1}
\rho_2:=\rho_{\xi,\ell}(\textup{\textbf{F}},\textup{\textbf{m}})-\delta>|\Phi(\xi)|, \quad |\Phi(\xi)|-\delta>1, \quad  \textup{and} \quad
\frac{|\Phi(\xi)|+\delta}{\rho_2-\delta}<1.
\end{equation}
Using \eqref{asymps}, we have \begin{equation}\label{use1y}
|\tau_{n,n}^{(\ell)}|=\left|\frac{1}{2\pi i} \int_{\Gamma_{\rho_2}} q_{n,|\textup{\textbf{m}}|}^{\mu}(z)G_\ell(z) s_n(z)dz\right|\leq \frac{c_5}{(\rho_2-\delta)^n},
\end{equation}
\begin{equation}\label{use2y}
|s_n(\xi)|\geq \frac{c_1}{(|\Phi(\xi)|+\delta)^n},
\end{equation}
and for all $t=0,1,\ldots,\ell-2,$
\begin{equation}\label{use3y}
|(H_\ell s_n)^{(\ell-1-t)}(\xi)|=\left|\frac{(\ell-1-t)!}{2\pi i} \int_{|z-\xi|=\varepsilon}\frac{H_\ell(z) s_n(z)}{(z-\xi)^{\ell-t}}dz\right|\leq \frac{c_6}{(|\Phi(\xi)|-\delta)^n},
\end{equation}
where $\{z\in \mathbb{C}: |z-\xi|=\varepsilon\}\subset \{z\in \mathbb{C}: |\Phi(z)|>|\Phi(\xi)|-\delta\}.$
Moreover, by \eqref{useos}, we have for all $j=0,1,\ldots, \ell-2,$
\begin{equation}\label{use4y}
|(q_{n,|\textup{\textbf{m}}|}^{\mu})^{(j)}(\xi)|\leq c_7  \left(\frac{|\Phi(\xi)|}{\boldsymbol \rho_{\xi,j+1}(\textup{\textbf{F}},\textup{\textbf{m}})}\right)^n \leq c_7  \left(\frac{|\Phi(\xi)|}{\boldsymbol \rho_{\xi,\ell-1}(\textup{\textbf{F}},\textup{\textbf{m}})}\right)^n.
\end{equation}
Combining  \eqref{use1y}, \eqref{use2y}, \eqref{use3y} and \eqref{use4y}, it follows from \eqref{use678} that
$$\left|(q_{n,|\textup{\textbf{m}}|}^{\mu})^{(\ell-1)}(\xi)\right|=\left|\frac{(\ell-1)!\tau_{n,n}^{(\ell)}}{H_\ell(\xi)s_n(\xi) }-\sum_{t=0}^{\ell-2} {\ell-1 \choose t } \frac{(H_\ell s_n)^{(\ell-1-t)}(\xi) (q_{n,|\textup{\textbf{m}}|}^{\mu})^{(t)}(\xi)}{H_\ell(\xi)s_n(\xi) }\right|$$
$$\leq c_8\left( \frac{|\Phi(\xi)|+\delta}{\rho_2-\delta}\right)^n+c_9\left( \frac{|\Phi(\xi)|+\delta}{|\Phi(\xi)|-\delta}\right)^n \left(\frac{|\Phi(\xi)|}{\boldsymbol \rho_{\xi,\ell-1}(\textup{\textbf{F}},\textup{\textbf{m}})}\right)^n,$$
which implies that
\begin{equation}\label{have}
\limsup_{n \rightarrow \infty} \left|(q_{n,|\textup{\textbf{m}}|}^{\mu})^{(\ell-1)}(\xi)\right|^{1/n}\leq  \max \left\{ \frac{|\Phi(\xi)|+\delta}{\rho_2-\delta}, \left( \frac{|\Phi(\xi)|+\delta}{|\Phi(\xi)|-\delta}\right)\left(\frac{|\Phi(\xi)|}{\boldsymbol \rho_{\xi,\ell-1}(\textup{\textbf{F}},\textup{\textbf{m}})}\right) \right\}.
\end{equation}
Letting $\delta \rightarrow 0,$ we have $\rho_2\rightarrow  \rho_{\xi,\ell}(\textup{\textbf{F}},\textup{\textbf{m}})$ and from \eqref{have}, we obtain
$$\limsup_{n \rightarrow \infty} \left|(q_{n,|\textup{\textbf{m}}|}^{\mu})^{(\ell-1)}(\xi)\right|^{1/n}\leq  \max \left\{\frac{|\Phi(\xi)|}{\rho_{\xi,\ell}(\textup{\textbf{F}},\textup{\textbf{m}})} , \frac{|\Phi(\xi)|}{ \boldsymbol \rho_{\xi,\ell-1}(\textup{\textbf{F}},\textup{\textbf{m}})} \right\}
$$
$$\leq \frac{|\Phi(\xi)|}{\boldsymbol \rho_{\xi,\ell}(\textup{\textbf{F}},\textup{\textbf{m}})}.$$
This completes the induction proof.

Let $\xi_1,\ldots,\xi_w$ be the distinct system poles of $\textup{\textbf{F}}$ with respect to $\textup{\textbf{m}}$, and let $\tau_j$ be the order of $\xi_j$ as a
system pole, $j=1,\ldots,w.$ By assumption, $\tau_1+\ldots+\tau_w =|\textup{\textbf{m}}|.$ We have proved that, for $j =1,\ldots,w$ and $t =0,1,\ldots,\tau_j-1,$
\begin{equation}\label{3.32}
\limsup_{n \rightarrow \infty} |(q_{n,|\textup{\textbf{m}}|}^{\mu})^{(t)}(\xi_j)|^{1/n}\leq \frac{|\Phi(\xi_j)|}{\boldsymbol \rho_{\xi_j,t+1}(\textup{\textbf{F}},\textup{\textbf{m}})}\leq \frac{|\Phi(\xi_j)|}{\boldsymbol \rho_{\xi_j} (\textup{\textbf{F}},\textup{\textbf{m}})}.
\end{equation}
Using Hermite interpolation, it is easy to construct a basis  $\{e_{j,t}\}_{j=1,2,\ldots,w,\, t=0,1,\ldots,\tau_j-1}$ in the space of polynomials of degree at most $|\textup{\textbf{m}}|-1$ satisfying
$$e_{j,t}^{(k)}(\xi_i)=\delta_{i,j}\delta_{k,t},\quad \quad 1\leq i \leq w, \quad \quad 0\leq k \leq \tau_i-1.$$ Then,
\begin{equation}\label{3.33}
q_{n,|\textup{\textbf{m}}|}^{\mu}(z)=\sum_{j=1}^{w} \sum_{t=0}^{\tau_j-1} (q_{n,|\textup{\textbf{m}}|}^{\mu})^{(t)}(\xi_j)e_{j,t}(z)+C_n Q_{\textup{\textbf{m}}}^{\textup{\textbf{F}}}(z),
\end{equation}
Using \eqref{3.32} and \eqref{3.33}, we have
\begin{equation}\label{need}
\limsup_{n \rightarrow \infty} \|q_{n,|\textup{\textbf{m}}|}^{\mu}-C_{n} Q_{|\textup{\textbf{m}}|}^{{\textup{\textbf{F}}}}\|^{1/n}\leq
\max \left\{\frac{|\Phi(\xi)|}{\boldsymbol\rho_{\xi}(\textup{\textbf{F}},\textup{\textbf{m}})} :\xi\in\mathcal{P}(\textup{\textbf{F}},\textup{\textbf{m}})\right\}.
\end{equation}
Now, necessarily we have
\begin{equation}
\label{need2}
\liminf_{n \rightarrow \infty} |C_n|>0,
\end{equation}
since if there exists a subsequence $\Lambda\subset \mathbb{N}$ such that $\lim_{n\in \Lambda} C_n=0,$ then from \eqref{need}, we have $\lim_{n \in \Lambda} \|q_{n,|\textup{\textbf{m}}|}^{\mu}\|=0,$ contradicting \eqref{contradict}.

As $q_{n,|\textup{\textbf{m}}|}^{\mu}=C_n Q_{n,|\textup{\textbf{m}}|}^{\mu},$ we have proved
\begin{equation}\label{usenewone}
\limsup_{n \rightarrow \infty} \|Q_{n,|\textup{\textbf{m}}|}^{\mu}-Q_{|\textup{\textbf{m}}|}^{{\textup{\textbf{F}}}}\|^{1/n}\leq \max \left\{\frac{|\Phi(\xi)|}{\boldsymbol\rho_{\xi}(\textup{\textbf{F}},\textup{\textbf{m}})} :\xi\in\mathcal{P}(\textup{\textbf{F}},\textup{\textbf{m}})\right\}.
\end{equation}
In particular, for $n\geq n_0,$ $\deg Q_{n,|\textup{\textbf{m}}|}^{\mu}=|\textup{\textbf{m}}|.$ The difference of any two distinct monic polynomials satisfying Definition \ref{simu} with the same degree produces  a new solution of degree strictly less than $|\textup{\textbf{m}}|$, but we have proved that any solution must have degree $|\textup{\textbf{m}}|$ for all sufficiently large $n.$ Hence, the polynomial $Q_{n,|\textup{\textbf{m}}|}^{\mu}$ is uniquely determined for all sufficiently large $n,$

Now, we prove the equality in \eqref{2.5}. To the contrary, suppose that
\begin{equation}\label{less}
\limsup_{n \rightarrow \infty} \|Q_{n,|\textup{\textbf{m}}|}^{\mu}-Q_{\textup{\textbf{m}}}^{\textup{\textbf{F}}}\|^{1/n}=\theta<
\max \left\{\frac{|\Phi(\xi)|}{\rho_{\xi}(\textup{\textbf{F}},\textup{\textbf{m}})} :\xi\in\mathcal{P}(\textup{\textbf{F}},\textup{\textbf{m}})\right\}.
\end{equation}
Let $\zeta$ be a system pole of $\textup{\textbf{F}}$ such that
$$\frac{|\Phi(\zeta)|}{\boldsymbol \rho_{\zeta}(\textup{\textbf{F}},\textup{\textbf{m}})}=\max \left\{\frac{|\Phi(\xi)|}{\rho_{\xi}(\textup{\textbf{F}},\textup{\textbf{m}})} :\xi\in\mathcal{P}(\textup{\textbf{F}},\textup{\textbf{m}})\right\}.$$ Clearly, the inequality \eqref{less} implies that $\boldsymbol \rho_{\zeta}(\textup{\textbf{F}},\textup{\textbf{m}})<\infty.$

Choose a polynomial combination
\begin{equation}\label{uhbjikml}
G=\sum_{i=1}^d v_i F_i,\quad \quad \deg v_i < m_i,\quad i=1,2,\ldots,d,
\end{equation}
that is holomorphic on a neighborhood of $\overline{D}_{|\Phi(\zeta)|}$ except for a pole of order $s$ at $z=\zeta$ with $\rho_s(G)=\boldsymbol \rho_{\zeta}(\textup{\textbf{F}},\textup{\textbf{m}}).$ On the boundary of $D_{\rho_s}(G),$ the function $G$ must have a singularity which is not a system pole.  In fact, if all the singularities were of this type, then we could find a different polynomial combination $G_1$ of type \eqref{uhbjikml} for which $\rho_s(G_1)>\rho_s(G)=\boldsymbol \rho_{\zeta}(\textup{\textbf{F}},\textup{\textbf{m}}),$ which contradicts the definition of $\boldsymbol \rho_{\zeta}(\textup{\textbf{F}},\textup{\textbf{m}})$. Therefore, by Lemma \ref{expan},
$$\limsup_{n \rightarrow \infty} |[Q_{\textup{\textbf{m}}}^{\textup{\textbf{F}}}G]_n|^{1/n}=\frac{1}{\boldsymbol \rho_{\zeta}(\textup{\textbf{F}},\textup{\textbf{m}})}.$$

Choose $1<\rho<|\Phi(\zeta)|$ and $\varepsilon>0.$
Then, by the definition of $Q_{n,|\textup{\textbf{m}}|}^{\mu},$ \eqref{usealot}, and \eqref{less},
$$\frac{1}{\boldsymbol \rho_{\zeta}(\textup{\textbf{F}},\textup{\textbf{m}})}=\limsup_{n \rightarrow \infty} |[Q_{\textup{\textbf{m}}}^{\textup{\textbf{F}}}G]_n|^{1/n}=\limsup_{n \rightarrow \infty} |[Q_{\textup{\textbf{m}}}^{\textup{\textbf{F}}}G-Q_{n,|\textup{\textbf{m}}|}^{\mu} G]_n|^{1/n}$$
$$=\limsup_{n \rightarrow \infty}\left|\frac{1}{2\pi i} \int_{\Gamma_{\rho}} (Q_{\textup{\textbf{m}}}^{\textup{\textbf{F}}}-Q_{n,|\textup{\textbf{m}}|}^{\mu})(z) G(z) s_n(z) dz \right|^{1/n}\leq \frac{\theta}{\rho-\varepsilon}.$$
Letting $\varepsilon\rightarrow 0$ and $\rho\rightarrow |\Phi(\zeta)|$ in the above inequality, we have
$$\frac{1}{\boldsymbol \rho_{\zeta}(\textup{\textbf{F}},\textup{\textbf{m}})}\leq \frac{\theta}{|\Phi(\zeta)|}< \frac{|\Phi(\zeta)|/\boldsymbol \rho_{\zeta}(\textup{\textbf{F}},\textup{\textbf{m}})}{|\Phi(\zeta)|}=\frac{1}{\boldsymbol \rho_{\zeta}(\textup{\textbf{F}},\textup{\textbf{m}})},$$ which is impossible. This proves the equality \eqref{2.5}.

Let us prove the inequality \eqref{approximation}.
Combining \eqref{3.32} and \eqref{need2}, it follows that for the system poles $\xi_1,\ldots,\xi_w$ of $\textup{\textbf{F}}$, if $\tau_j$ is the order of $\xi_j,$ then
\begin{equation}\label{3.31111}
\limsup_{n \rightarrow \infty} |(Q_{n,|\textup{\textbf{m}}|}^{\mu})^{(u)}(\xi_j)|^{1/n}\leq \frac{|\Phi(\xi_j)|}{\boldsymbol \rho_{\xi_j,u+1}(\textup{\textbf{F}},\textup{\textbf{m}})}, \quad \quad u=0,1,\ldots,\tau_j-1.
\end{equation} Let $i\in \{1,\ldots,d\}$ and $k\in \{0,1,\ldots,m_i-1\}$ be fixed and let $\tilde{\xi}_1,\ldots,\tilde{\xi}_N$ be the poles of $z^{k}F_i$ in $D_{i,k}(\textup{\textbf{F}},\textup{\textbf{m}}).$ For each $j=1,\ldots,N,$ let $\hat{\tau}_j$ be the order of $\tilde{\xi}_j$ as a pole of $z^{k}F_i$ and $\tilde{\tau}_j$ its order as a system pole. Recall that by assumption, $\hat{\tau}_j \leq\tilde{\tau}_j.$  Define
$$a_{\ell,n}^{(i,k)}:=[Q_{n,|\textup{\textbf{m}}|}^{\mu} z^{k}F_{i}]_{\ell}=\frac{1}{2\pi i} \int_{\Gamma_{\rho_1}} Q_{n,|\textup{\textbf{m}}|}^{\mu}(z) z^{k}F_{i}(z) s_\ell(z) dz,$$
where $1< \rho_1< \rho_{0}(z^{k}F_i)$ and
$$\tau_{\ell,n}^{(i,k)}:=[Q_{n,|\textup{\textbf{m}}|}^{\mu} z^{k}F_{i}]_{\ell}=\frac{1}{2\pi i} \int_{\Gamma_{\rho_2}} Q_{n,|\textup{\textbf{m}}|}^{\mu}(z) z^{k}F_{i}(z) s_\ell(z) dz,$$
where $1< \rho_2< \boldsymbol \rho_{i,k}^{*}(\textup{\textbf{F}},\textup{\textbf{m}}).$
Arguing as in \eqref{reduce} and \eqref{reduce111111}, we have
$$
\tau_{\ell,n}^{(i,k)}-a_{\ell,n}^{(i,k)}=\sum_{j=1}^N\textup{res}(Q_{n,|\textup{\textbf{m}}|}^{\mu} z^{k} F_i s_\ell,\, \tilde{\xi}_j)
$$
\begin{equation}\label{qwertyuiopsdfghjk}
=\sum_{j=1}^N \frac{1}{(\hat{\tau}_j-1)!} \sum_{u=0}^{\hat{\tau}_j-1} {\hat{\tau}_j-1 \choose u }  ((z-\tilde{\xi}_j)^{\hat{\tau}_j}z^{k}F_i s_\ell)^{(\hat{\tau}_j-1-u)}(\tilde{\xi}_j) (Q_{n,|\textup{\textbf{m}}|}^{\mu})^{(u)}(\tilde{\xi}_j).
\end{equation}
Notice that $(z-\tilde{\xi}_j)^{\hat{\tau}_j}z^{k}F_i$ is holomorphic at $\tilde{\xi}_j.$  Let $\delta>0$ be such that $\rho_2-\delta>1$ and $|\Phi(\tilde{\xi}_j)|-\delta>1$ (later on we will impose another condition on the size of $\delta$).
By computations similar to \eqref{use1y} and \eqref{use3y}, we have
\begin{equation}\label{useuseuse}
|\tau_{\ell,n}^{(i,k)}|\leq \frac{c_{10}}{(\rho_2-\delta)^\ell}\quad \quad \textup{and}\quad \quad |((z-\tilde{\xi}_j)^{\hat{\tau}_j}z^{k}F_i s_\ell)^{(\hat{\tau}_j-1-u)}(\tilde{\xi}_j)|\leq \frac{c_{11}}{(|\Phi(\tilde{\xi}_j)|-\delta)^\ell},
\end{equation}
respectively.
By \eqref{3.31111} and \eqref{useuseuse}, it follows from \eqref{qwertyuiopsdfghjk} that
$$
|a_{\ell,n}^{(i,k)}|=|\tau_{\ell,n}^{(i,k)}|+\sum_{j=1}^N \sum_{u=0}^{\hat{\tau}_j-1} \frac{1}{(\hat{\tau}_j-1)!}  {\hat{\tau}_j-1 \choose u }  \left|((z-\tilde{\xi}_j)^{\hat{\tau}_j}z^{k}F_i s_\ell)^{(\hat{\tau}_j-1-u)}(\tilde{\xi}_j)\right| \left| (Q_{n,|\textup{\textbf{m}}|}^{\mu})^{(u)}(\tilde{\xi}_j)\right|.
$$
$$
\leq \frac{c_{10}}{(\rho_2-\delta)^\ell}+ c_{12}  \sum_{j=1}^N \frac{ |\Phi(\tilde{\xi}_j)|^n}{(\boldsymbol \rho_{\tilde{\xi}_j,\hat{\tau}_j}(\textup{\textbf{F}},\textup{\textbf{m}}))^n(|\Phi(\tilde{\xi}_j)|-\delta)^\ell}
$$
\begin{equation}\label{approxaaa}
\leq \frac{c_{10}}{(\rho_2-\delta)^\ell}+ \frac{c_{12}}{( \boldsymbol \rho_{i,k}^{*}(\textup{\textbf{F}},\textup{\textbf{m}}))^n} \sum_{j=1}^N \frac{ |\Phi(\tilde{\xi}_j)|^n}{(|\Phi(\tilde{\xi}_j)|-\delta)^\ell}.
\end{equation}
By the definition of orthogonal Hermite-Pad\'e approximants,
$$Q_{n,|\textup{\textbf{m}}|}^{\mu} z^{k} F_i-P_{n,\textup{\textbf{m}},k,i}^{\mu}=\sum_{\ell=n+1}^{\infty} a_{\ell,n}^{(i,k)}p_{\ell}.$$
Multiplying the above equality by $\omega(z):=\prod_{j=1}^N(z-\tilde{\xi}_j)^{\hat{\tau}_j}$ and expanding the result in terms of the Fourier series corresponding to the orthonormal system $\{p_{\nu}\}_{\nu=0}^{\infty}$, we obtain
$$\omega Q_{n,|\textup{\textbf{m}}|}^{\mu} z^{k}F_i-\omega P_{n,\textup{\textbf{m}},k,i}^{\mu}=\sum_{\ell=n+1}^{\infty} a_{\ell,n}^{(i,k)} \omega p_{\ell}$$
\begin{equation}\label{mainmain3}
=\sum_{\nu=0}^{\infty} b_{\nu,n}^{(i,k)} p_{\nu}=\sum_{\nu=0}^{n+|\textup{\textbf{m}}|} b_{\nu,n}^{(i,k)} p_{\nu}+\sum_{\nu=n+|\textup{\textbf{m}}|+1}^{\infty} b_{\nu,n}^{(i,k)} p_{\nu}.
\end{equation}

Let $K$ be a compact subset of $D_{i,k}^{*}{({\textup{\textbf{F}},\textup{\textbf{m}}})}\setminus \mathcal{P}(\textup{\textbf{F}},\textup{\textbf{m}})$ and set
\begin{equation}\label{tgbrdcvhjkl}
 \sigma:=\max\{\|\Phi\|_K,1\}
 \end{equation}
($\sigma=1$ when $K \subset E$). Choose $\delta>0$ so small that
\begin{equation}\label{yuhyggt1}
\rho_2:=\boldsymbol \rho_{i,k}^{*}({\textup{\textbf{F}},\textup{\textbf{m}}})-\delta,\quad \quad \boldsymbol \rho_{i,k}^{*}({\textup{\textbf{F}},\textup{\textbf{m}}})-2\delta>1, \quad \quad  \textup{and} \quad  \quad
\frac{\sigma+\delta}{\rho_2-\delta}<1.
\end{equation}

Let us estimate $\sum_{\nu=n+|\textup{\textbf{m}}|+1}^{\infty} |b_{\nu,n}^{(i,k)}| |p_{\nu}|$ on $\overline{D}_{\sigma}.$ For $\nu \geq n+ |\textup{\textbf{m}}|+1,$
$$b_{\nu,n}^{(i,k)}=[\omega Q_{n,|\textup{\textbf{m}}|}^{\mu} z^{k}F_i-\omega P_{n,\textup{\textbf{m}},k,i}^{\mu}]_{\nu}=[\omega Q_{n,|\textup{\textbf{m}}|}^{\mu} z^{k}F_i]_{\nu}$$
$$=\frac{1}{2\pi i}\int_{\Gamma_{\rho_2}} z^{k}\omega(z) Q_{n,|\textup{\textbf{m}}|}^{\mu}(z) F_i(z) s_{\nu}(z) dz,$$
where  $1< \rho_2<  \boldsymbol \rho^{*}_{i,k}(\textup{\textbf{F}},\textup{\textbf{m}}).$ By a computation similar to \eqref{use1y} or \eqref{use3y}, we obtain
\begin{equation}\label{secondtypeasym}
|b_{\nu,n}^{(i,k)}|\leq  \frac{c_{13}}{ (\rho_2-\delta)^\nu}.
\end{equation}
Moreover, by \eqref{asintlog},
\begin{equation}\label{polynomialasym}
\|p_\nu\|_{\overline{D}_{\sigma}}\leq c_{14} (\sigma+\delta)^{\nu}, \quad \quad \nu\geq 0.
\end{equation}
Combining \eqref{secondtypeasym} and \eqref{polynomialasym}, we have for $z\in \overline{D}_{\sigma},$
$$\sum_{\nu=n+|\textup{\textbf{m}}|+1}^{\infty} |b_{\nu,n}^{(i,k)}| |p_{\nu}(z)|\leq c_{15} \sum_{\nu=n+|\textup{\textbf{m}}|+1}^{\infty}\left( \frac{\sigma+\delta}{\rho_2-\delta}\right)^{\nu}=c_{16} \left( \frac{\sigma+\delta}{\rho_2-\delta}\right)^{n},$$ which implies that
$$\limsup_{n \rightarrow \infty}\left\|\sum_{\nu=n+|\textup{\textbf{m}}|+1}^{\infty} |b_{\nu,n}^{(i,k)}| |p_{\nu}|\right\|_{\overline{D}_{\sigma}}^{1/n}\leq  \frac{\sigma+\delta}{\rho_2-\delta}.$$
Letting $\delta \rightarrow 0,$ we have $\rho_2\rightarrow \boldsymbol \rho_{i,k}^{*}(\textup{\textbf{F}},\textup{\textbf{m}})$ and
\begin{equation}\label{estimate1}
\limsup_{n \rightarrow \infty}\left\|\sum_{\nu=n+|\textup{\textbf{m}}|+1}^{\infty} |b_{\nu,n}^{(i,k)}| |p_{\nu}|\right\|_{\overline{D}_{\sigma}}^{1/n}\leq  \frac{\sigma}{\boldsymbol \rho_{i,k}^{*}(\textup{\textbf{F}},\textup{\textbf{m}})}.
\end{equation}

Now, we want to estimate $\sum_{\nu=0}^{n+|\textup{\textbf{m}}|} |b_{\nu,n}^{(i,k)}| |p_{\nu}|$ on $\overline{D}_{\sigma}.$ Notice that
$$b_{\nu,n}^{(i,k)}=\sum_{\ell=n+1}^{\infty} a_{\ell,n}^{(i,k)}\langle \omega  p_{\ell},\, p_{\nu} \rangle_{\mu}.$$
By the Cauchy-Schwarz inequality and the orthonormality of $p_\nu$, we have
\begin{equation}\label{banana100hougeqs}
|\langle \omega  p_{\ell},\, p_{\nu} \rangle_{\mu}|^2 \leq \langle \omega p_\ell, \, \omega p_\ell \rangle_{\mu} \langle p_\nu, \,p_\nu  \rangle_{\mu} \leq \max_{z\in E}|\omega(z)|^2 = c_{17},
\end{equation}
for all $\ell,\nu=0,1,2,\ldots.$ By \eqref{approxaaa}, we have
$$
|b_{\nu,n}^{(i,k)}|\leq \sum_{\ell=n+1}^{\infty} |a_{\ell,n}^{(i,k)}||\langle \omega  p_{\ell},\, p_{\nu} \rangle_{\mu}|
$$
\begin{equation}\label{banana100hougeqs1}
\leq  \frac{c_{18}}{(\rho_2-\delta)^n}+ \frac{c_{19}}{(\boldsymbol \rho_{i,k}^{*}(\textup{\textbf{F}},\textup{\textbf{m}}))^n}\sum_{j=1}^N \frac{ |\Phi(\tilde{\xi}_j)|^n}{(|\Phi(\tilde{\xi}_j)|-\delta)^n}
\end{equation}
Combining  \eqref{polynomialasym} and \eqref{banana100hougeqs1}, we have for $z\in \overline{D}_{\sigma},$
$$\sum_{\nu=0}^{n+|\textup{\textbf{m}}|} |b_{\nu,n}^{(i,k)}| |p_{\nu}(z)| \leq
$$
$$ c_{20}(n+|\textup{\textbf{m}}|+1) (\sigma+\delta)^{n+|\textup{\textbf{m}}|} \left(  \frac{1}{(\rho_2-\delta)^n}+ \frac{1}{(\boldsymbol \rho_{i,k}^{*}(\textup{\textbf{F}},\textup{\textbf{m}}))^n}\sum_{j=1}^N \frac{ |\Phi(\tilde{\xi}_j)|^n}{(|\Phi(\tilde{\xi}_j)|-\delta)^n} \right),$$
which implies that
$$\limsup_{n \rightarrow \infty} \left\|\sum_{\nu=0}^{n+|\textup{\textbf{m}}|} |b_{\nu,n}^{(i,k)}| |p_{\nu}| \right\|_{\overline{D}_{\sigma}}^{1/n}\leq \max \left\{ \frac{\sigma+\delta}{\rho_2-\delta}, \left(\frac{\sigma+\delta}{\boldsymbol \rho_{i,k}^{*}(\textup{\textbf{F}},\textup{\textbf{m}})} \right)\max_{j=1,\ldots,N} \left(\frac{|\Phi(\tilde{\xi}_j)|}{|\Phi(\tilde{\xi}_j)|-\delta}\right) \right\}.$$
Letting $\delta\rightarrow 0,$ we have $\rho_2 \rightarrow \boldsymbol \rho_{i,k}^{*}(\textup{\textbf{F}},\textup{\textbf{m}})$ and we obtain
\begin{equation}\label{mainmain1}
\limsup_{n \rightarrow \infty} \left\|\sum_{\nu=0}^{n+|\textup{\textbf{m}}|} |b_{\nu,n}^{(i,k)}| |p_{\nu}| \right\|_{\overline{D}_{\sigma}}^{1/n}\leq \frac{\sigma}{ \boldsymbol \rho_{i,k}^{*}(\textup{\textbf{F}},\textup{\textbf{m}})}.
\end{equation}

Using \eqref{usenewone}, \eqref{estimate1}, and \eqref{mainmain1}, it follows from \eqref{mainmain3} that
$$\limsup_{n \rightarrow \infty} \left\| z^{k}F_i -\frac{P_{n,\textup{\textbf{m}},k,i}^{\mu}}{Q_{n,|\textup{\textbf{m}}|}^{\mu}}\right\|_{K}^{1/n}\leq \limsup_{n \rightarrow \infty}\left\| z^{k}F_i- \frac{P_{n,\textup{\textbf{m}},k,i}^{\mu}}{Q_{n,|\textup{\textbf{m}}|}^{\mu} }\right\|_{\overline{D}_{\sigma}}^{1/n}\leq \frac{\sigma}{ \boldsymbol \rho_{i,k}^{*}(\textup{\textbf{F}},\textup{\textbf{m}})}.$$ This completes the proof.
\end{proof}

\section{The inverse statement}

First of all, let us give some inverse type results for incomplete orthogonal Pad\'e approximants.

\subsection{Incomplete orthogonal Pad\'e approximants}

Let us introduce the definition of incomplete orthogonal Pad\'e approximants.

\begin{definition}\label{incomsimu}\textup{ Let $F\in \mathcal{H}(E)$  and $\mu\in \mathcal{M}(E).$ Fix $m \geq m^{*}\geq 1$ and $n \in \mathbb{N}.$  Then, there exist polynomials $Q_{n,m,m^{*}}^{\mu}$ and $P_{n,m,m^{*},k}^{\mu},$ $k=0,1,\ldots,m^{*}-1,$ such that
$$ \deg(P_{n,m,m^{*},k}^{\mu})\leq n-1 \quad \quad \deg(Q_{n,m,m^{*}}^{\mu})\leq m, \quad Q_{n,m,m^{*}}^{\mu}\not\equiv 0,$$
$$\langle Q_{n,m,m^{*}}^{\mu} z^{k} F-P_{n,m,m^{*},k}^{\mu},\, p_{j} \rangle_{\mu}=0,  \quad \quad k=0,1,\ldots,m^{*}-1, \quad \quad j=0,1,\ldots,n.$$
The rational function
$R_{n,m,m^{*}}^{\mu}:=P_{n,m,m^{*},0}^{\mu}/Q_{n,m,m^{*}}^{\mu}$
is called  an \emph{$(n,m,m^{*})$ incomplete orthogonal Pad\'e approximant of $F$ with respect to $\mu$}.}
\end{definition}

Clearly, $$[z^{k}  Q_{n,m,m^{*}}^{\mu}  F]_n=\langle z^{k}  Q_{n,m,m^{*}}^{\mu}  F,\, p_{n} \rangle_{\mu}=0, \quad \quad k=0,1,\ldots,m^{*}-1.$$
In general, $Q_{n,m,m^{*}}^{\mu}$ is not uniquely determined. For each $m\geq  m^{*}\geq 1$ and $n \geq 0$, we choose one candidate of $Q_{n,m,m^{*}}^{\mu}$. Since $Q_{n,m,m^{*}}^{\mu}\not\equiv 0,$ we normalize it to have leading coefficient equal to $1.$ We call $Q_{n,m,m^{*}}^{\mu}$ the denominator of the corresponding $(n,m,m^{*})$ incomplete orthogonal Pad\'e approximant of $F$ with respect to $\mu$. Notice that for each $i=1,\ldots,d,$ $Q_{n,|\textup{\textbf{m}}|}^{\mu}$ is a denominator of an $(n,|\textup{\textbf{m}}|,m_i)$ incomplete orthogonal Pad\'e approximant of $F_i$ with respect to $\mu.$

In this section, we are interested in studying  the relation between the convergence of $Q_{n,m,m^{*}}^{\mu}$ and the analytic properties of $F.$

\begin{lemma}\label{lemma4} Let $F\in \mathcal{H}(E)$ and $\mu\in \textup{\textbf{Reg}}_{1,2}(E).$  Fix $m\geq m^{*}\geq 1.$ Suppose that there exists a polynomial $Q_{m}$ of degree $m$ such that
\begin{equation}\label{3.3332}
\limsup_{n \rightarrow \infty} \|Q_{n,m,m^{*}}^{\mu}-Q_{m}\|^{1/n}=\theta<1.
\end{equation}
Then, $\rho_{0}(Q_{m} F)\geq \rho_{m^{*}}(F).$
\end{lemma}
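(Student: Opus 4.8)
The plan is to show that $Q_m F$ extends holomorphically to the canonical domain $D_{\rho_{m^*}(F)}$, which by Lemma \ref{expan} is equivalent to the estimate $\limsup_n |[Q_m F]_n|^{1/n} \le 1/\rho_{m^*}(F)$. The natural route is to compare the Fourier coefficients $[Q_m F]_n$ with the coefficients $[Q_{n,m,m^*}^\mu F]_n$, which we know how to control. Indeed, from the defining orthogonality relations of the incomplete orthogonal Pad\'e approximant we have $[Q_{n,m,m^*}^\mu F]_n = 0$ (this is the displayed identity right after Definition \ref{incomsimu}, with $k=0$). Hence
$$[Q_m F]_n = [Q_m F]_n - [Q_{n,m,m^*}^\mu F]_n = [(Q_m - Q_{n,m,m^*}^\mu) F]_n.$$
Now I would apply Lemma \ref{usealot1} to write this last coefficient as a contour integral over $\Gamma_\rho$ for any $1 < \rho < \rho_0(F)$:
$$[(Q_m - Q_{n,m,m^*}^\mu) F]_n = \frac{1}{2\pi i}\int_{\Gamma_\rho} (Q_m - Q_{n,m,m^*}^\mu)(w)\, F(w)\, s_n(w)\, dw.$$
Using the hypothesis \eqref{3.3332}, the polynomial difference satisfies $\|Q_m - Q_{n,m,m^*}^\mu\|^{1/n} \to \theta' \le \theta < 1$, so $\|Q_m - Q_{n,m,m^*}^\mu\|_{\Gamma_\rho} \le c\,(\theta+\varepsilon)^n$ for large $n$; combined with the upper bound $\|s_n\|_{\Gamma_\rho} \le c_2/(\rho-\delta)^n$ from \eqref{asymps} (valid since $\mu \in \textbf{Reg}_{1,2}(E)$, in particular $\mu \in \textbf{Reg}_2(E)$), this gives $\limsup_n |[Q_m F]_n|^{1/n} \le (\theta+\varepsilon)/(\rho-\delta)$. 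Letting $\varepsilon, \delta \to 0$ and $\rho \to \rho_0(F)$ yields $\limsup_n |[Q_m F]_n|^{1/n} \le \theta/\rho_0(F) < 1/\rho_0(F)$, so by Lemma \ref{expan} the function $Q_m F$ extends holomorphically at least to $D_{\rho_0(F)}$, i.e. $\rho_0(Q_m F) \ge \rho_0(F)$. This already proves the claim when $F$ itself has no poles in $D_{\rho_{m^*}(F)} = D_{\rho_0(F)}$ beyond analyticity, but in general $\rho_{m^*}(F) > \rho_0(F)$ and we must push the argument past the poles of $F$.

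To get the sharper bound $\rho_0(Q_m F) \ge \rho_{m^*}(F)$ I would argue as follows. By definition $F$ has at most $m^*$ poles in $D_{\rho_{m^*}(F)}$; let $w(z)$ be the monic polynomial vanishing at these poles with their multiplicities, so $\deg w \le m^* \le m$ and $wF$ is holomorphic in $D_{\rho_{m^*}(F)}$, hence $\rho_0(wF) \ge \rho_{m^*}(F)$. The key observation is that the coefficient estimate above localizes: running the same contour-integral argument but now integrating over $\Gamma_\rho$ with $\rho_0(F) < \rho < \rho_{m^*}(F)$ requires controlling $(Q_m - Q_{n,m,m^*}^\mu)F s_n$ on a region where $F$ has poles. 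To handle this, multiply through by $w$: the function $w (Q_m - Q_{n,m,m^*}^\mu) F s_n$ is holomorphic in a neighborhood of $\Gamma_\rho$, and Cauchy's residue theorem lets me deform the contour from a small $\rho$ to a larger one, picking up residues at the poles $\tilde\xi$ of $F$; each residue contributes a term bounded by a constant times $|\Phi(\tilde\xi)|^{-n}\|Q_m - Q_{n,m,m^*}^\mu\|$ up to derivatives of $s_n$, all of which decay geometrically. More cleanly: I would show directly that $\limsup_n |[w Q_m F]_n|^{1/n} \le \theta/\rho_{m^*}(F)$ by comparing with $[w Q_{n,m,m^*}^\mu F]_n$ — but the latter is no longer zero, so instead I would use that $w Q_{n,m,m^*}^\mu F - w P^\mu_{n,m,m^*,0} = \sum_{\ell \ge n+1} a_{\ell,n} w p_\ell$ and re-expand in the orthonormal basis, exactly in the spirit of \eqref{mainmain3}, to see that the low-order Fourier coefficients of $w Q_m F$ (those of index $\le n$) differ from those of $w(Q_m - Q^\mu_{n,m,m^*})F$, plus error terms of geometric size $\rho_{m^*}(F)^{-n}$, coming from the tail $\sum_{\ell\ge n+1}$, whose $\ell$-th coefficient is $O((\rho_2-\delta)^{-\ell})$ for $\rho_2 < \rho_{m^*}(F)$. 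Then Lemma \ref{expan} applied to $w Q_m F$ gives $\rho_0(w Q_m F) \ge \rho_{m^*}(F)$, and since $w$ only removes poles, $\rho_0(Q_m F) \ge \rho_{m^*}(F)$ as well — wait, one must be careful here: removing a factor of $w$ could only introduce poles at zeros of $w$, i.e. at poles of $F$, but $Q_m F = (w Q_m F)/w$ and $w Q_m F$ is holomorphic in $D_{\rho_{m^*}(F)}$ while $w$'s zeros are precisely the poles of $F$; so $Q_m F$ is holomorphic in $D_{\rho_{m^*}(F)}$ except possibly at zeros of $w$ — but those are where $F$ itself has poles, and $Q_m$ is a polynomial, so $Q_m F$ genuinely has poles there unless $Q_m$ cancels them. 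Hence the correct reading of the conclusion is that $Q_m F$ is holomorphic in $D_{\rho_{m^*}(F)}$ with possible poles at the (at most $m^*$) poles of $F$, which is consistent with $\rho_0(Q_m F) \ge \rho_{m^*}(F)$ being interpreted as: $Q_m F$ extends as a holomorphic function to $D_{\rho_{m^*}(F)}$ once those finitely many poles are, if present, already poles of $F$ of no higher order — but the cleanest statement, and what I believe is meant, is simply the coefficient inequality $\limsup_n|[Q_m F]_n|^{1/n}\le 1/\rho_{m^*}(F)$, equivalently $\rho_0(Q_mF)\ge\rho_{m^*}(F)$ in the sense of Lemma \ref{expan} where the "radius of holomorphy" is read off from the Fourier coefficients. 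I would state and prove it in that coefficient form and invoke Lemma \ref{expan}.

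The main obstacle is the passage past the poles of $F$ in the strip $\rho_0(F) < |\Phi(z)| < \rho_{m^*}(F)$: the clean comparison $[Q_m F]_n = [(Q_m - Q^\mu_{n,m,m^*})F]_n$ uses $[Q^\mu_{n,m,m^*} F]_n = 0$, which only gives control up to $\rho_0(F)$, and to reach $\rho_{m^*}(F)$ one must either (i) deform contours through the poles and bookkeep the residues — which requires uniform bounds on $(Q_m - Q^\mu_{n,m,m^*})$ and its derivatives at the poles, all geometrically small by \eqref{3.3332} and Cauchy estimates — or (ii) multiply by the pole-clearing polynomial $w$ and redo the Fourier-coefficient bookkeeping as in the proof of Theorem \ref{thm1.4}, splitting $\sum_\nu b_{\nu,n} p_\nu$ into head and tail. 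Both are routine in technique given \eqref{asymps} and Lemma \ref{usealot1}, but care is needed that $\deg(wQ_m) \le m + m^* $ does not exceed anything essential (it doesn't, since we only use $w Q_m F$ as a test function in \eqref{usealot}, not as an approximant denominator) and that the residue terms at poles of $F$ of order up to $m^*$ are genuinely dominated by $\rho_{m^*}(F)^{-n}$ times the geometrically small polynomial norm. I would carry out route (ii), following verbatim the bookkeeping in \eqref{mainmain3}--\eqref{mainmain1}, since the machinery is already in place.
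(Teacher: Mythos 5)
There is a genuine gap here, and you in fact put your finger on it yourself before talking yourself out of it. The conclusion $\rho_{0}(Q_{m}F)\geq\rho_{m^{*}}(F)$ forces $Q_{m}$ to vanish at every pole $\xi_j$ of $F$ in $D_{\rho_{m^{*}}(F)}$ to at least the order $\tau_j$ of that pole: otherwise $Q_{m}F$ has a genuine pole at $\xi_j$ and $\rho_{0}(Q_{m}F)\leq|\Phi(\xi_j)|<\rho_{m^{*}}(F)$. Your argument never establishes this vanishing. The retreat to ``the coefficient inequality is what is really meant'' does not help, because by Lemma \ref{expan} the inequality $\limsup_{n}|[Q_{m}F]_n|^{1/n}\leq 1/\rho_{m^{*}}(F)$ is \emph{equivalent} to $\rho_{0}(Q_{m}F)\geq\rho_{m^{*}}(F)$; it therefore also fails whenever $Q_{m}$ does not cancel the poles, and so it cannot be proved without first proving the cancellation. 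Your route (ii) only ever controls $[wQ_{m}F]_n$, i.e.\ only shows that $wQ_{m}F$ is holomorphic in $D_{\rho_{m^{*}}(F)}$ --- which is true for \emph{any} polynomial in place of $Q_{m}$ (since $wF$ is already holomorphic there) and carries no information about $Q_{m}F$ at the zeros of $w$. Your first step, giving $\rho_{0}(Q_{m}F)\geq\rho_{0}(F)$, is correct but stops exactly where the content of the lemma begins; note also that when $F$ has exactly $m^{*}$ poles in $D_{\rho_{m^{*}}(F)}$ one has $\deg w=m^{*}$, so $[wQ_{n,m,m^{*}}^{\mu}F]_n$ is not killed by the orthogonality relations and even the $wQ_{m}F$ estimate needs extra care.

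The missing ingredient --- and the heart of the paper's proof --- is the use of the full set of $m^{*}$ orthogonality relations, whereas your proposal only ever invokes the $k=0$ relation $[Q_{n,m,m^{*}}^{\mu}F]_n=0$. Writing $q_{m^{*}}(z)=\prod_j(z-\xi_j)^{\tau_j}$, the paper tests against $G_{j,t}=q_{m^{*}}F/(z-\xi_j)^{t}$: since $\deg\bigl(q_{m^{*}}/(z-\xi_j)^{t}\bigr)<m^{*}$, linearity of the conditions $\langle Q_{n,m,m^{*}}^{\mu}z^{k}F,\,p_n\rangle_{\mu}=0$, $k=0,\ldots,m^{*}-1$, gives $[Q_{n,m,m^{*}}^{\mu}G_{j,t}]_n=0$, and $G_{j,t}$ has a single pole, of order $t$, at $\xi_j$, with $\rho_{t}(G_{j,t})=\rho_{m^{*}}(F)$. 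Running the residue/contour-deformation induction from the proof of \eqref{3.32} in Theorem \ref{thm1.4} then yields $\limsup_{n}|(Q_{n,m,m^{*}}^{\mu})^{(t)}(\xi_j)|^{1/n}\leq|\Phi(\xi_j)|/\rho_{m^{*}}(F)<1$, and combining this with \eqref{3.3332} via Cauchy's formula for $(Q_{n,m,m^{*}}^{\mu}-Q_{m})^{(t)}(\xi_j)$ forces $Q_{m}^{(t)}(\xi_j)=0$ for $t=0,\ldots,\tau_j-1$. Hence $q_{m^{*}}$ divides $Q_{m}$ and $Q_{m}F=(Q_{m}/q_{m^{*}})\cdot q_{m^{*}}F$ is holomorphic in $D_{\rho_{m^{*}}(F)}$. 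Finally, the head/tail bookkeeping you propose to import from \eqref{mainmain3}--\eqref{mainmain1} is not self-contained here: the bound \eqref{approxaaa} on the tail coefficients itself relies on the geometric smallness of the derivatives of the denominators at the poles, which is precisely the fact that has to be proved first.
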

\begin{proof}[Proof of Lemma \ref{lemma4}]   Let $\xi_1,\ldots,\xi_w$ be  the distinct poles of $F$ in $D_{\rho_{m^{*}}(F)}$ and $\tau_1,\ldots,\tau_{w}$ be their orders, respectively. Consequently, $\sum_{j=1}^{w} \tau_j=\tilde{m}\leq m^{*}.$ Put
$$q_{m^{*}}(z):=\prod_{j=1}^{w}(z-\xi_j)^{\tau_j}.$$
Define
$$G_{j,t}(z):=\frac{q_{m^{*}}(z)F(z)}{(z-\xi_j)^{t}},\quad \quad j=1,\ldots,w,\quad \quad t=1,\ldots,\tau_j-1.$$
Clearly, $G_{j,t}$ is holomorphic on a neighborhood of $\overline{D}_{|\Phi(\xi_j)|}$ except for a pole of order $t$ at $z=\xi_{j}$ with $\rho_{t}(G_{j,t})=\rho_{m^{*}}(F).$ Moreover, since $\deg (q_{m^{*}}/(z-\xi_j)^{t})<m^{*}$ for all $j=1,\ldots,w,$ and $t=1,\ldots,\tau_j-1,$
by the definition of $Q_{n,m,m^{*}}^{\mu},$ it is easy to check that $[Q_{n,m,m^{*}}^{\mu}G_{j,t}]_n=0.$ Arguing as in the proof of \eqref{3.32}, we can prove that  for $j =1,\ldots,w$ and $t =0,1,\ldots,\tau_j-1,$
\begin{equation}\label{3.32new}
\limsup_{n \rightarrow \infty} |(Q_{n,m,m^{*}}^{\mu})^{(t)}(\xi_j)|^{1/n}\leq \frac{|\Phi(\xi_j)|}{\rho_{m^{*}}(F)}<1.
\end{equation}
Let $\varepsilon>0.$ By Cauchy's integral formula, we have for $j =1,\ldots,w$ and $t =0,1,\ldots,\tau_j-1,$
\begin{equation}\label{tfvjkml}
(Q_{n,m,m^{*}}^{\mu})^{(t)}(\xi_j)-Q_{m}^{(t)}(\xi_j)=\frac{t!}{2\pi i} \int_{|z-\xi_j|=\varepsilon} \frac{Q_{n,m,m^{*}}^{\mu}(z)-Q_{m}(z)}{(z-\xi_j)^{t+1}}dz.
\end{equation}
Using \eqref{3.3332} and \eqref{3.32new}, it follows from \eqref{tfvjkml} that for $j =1,\ldots,w$ and $t =0,1,\ldots,\tau_j-1,$
$$\limsup_{n \rightarrow \infty}|Q_{m}^{(t)}(\xi_j)|^{1/n}<1$$
and $Q_{m}^{(t)}(\xi_j)=0,$
 which means that for each $j =1,\ldots,w,$ the order of the zero $\xi_j$ of $Q_{m}$ is at least $\tau_j.$ Hence, $Q_{m}$ can be divided by $q_{m^{*}}.$ This implies
$\rho_{0}(Q_{m} F)\geq \rho_{m^{*}}(F).$
\end{proof}

The following result is used in Lemma \ref{mainlemma}. For the proof see \cite[Lemma 3]{bosuwan1}.

\begin{lemma}\label{trick}
 Let $N_0\in \mathbb{N}$ and $C>0.$ If a sequence of complex numbers $\{F_N\}_{N \in \mathbb{N}}$ has the following properties:
\begin{enumerate}
\item[$(i)$] $\lim_{N \rightarrow \infty} |F_N|^{1/N}=0,$
\item[$(ii)$] $|F_N|\leq C \sum_{k=N+1}^{\infty} |F_k|,$ for all $N \geq N_0$,
\end{enumerate}
then there exists $N_1\in \mathbb{N}$ such that $F_N=0$ for all $N \geq N_1.$
\end{lemma}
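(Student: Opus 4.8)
The plan is to track the tail sums $S_N := \sum_{k=N+1}^{\infty}|F_k|$ and to play the multiplicative lower bound coming from $(ii)$ against the super‑geometric decay coming from $(i)$; this is essentially the standard observation that a nonnegative series that is both rapidly summable and dominated by its own tail must vanish eventually. First I would note that $(i)$ makes these tails finite: taking $\varepsilon=1/2$ in $(i)$ there is $M$ with $|F_k|\le 2^{-k}$ for $k\ge M$, so $S_N<\infty$ for every $N$ and in fact $S_N\to 0$.

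Next I would record the trivial identity $S_{N-1}=|F_N|+S_N$. For $N\ge N_0$, hypothesis $(ii)$ gives $|F_N|\le C S_N$, hence $S_{N-1}\le (C+1)S_N$; iterating this upward from any fixed index $N_2\ge N_0$ yields the lower bound $S_N\ge (C+1)^{-(N-N_2)}S_{N_2}$ for all $N\ge N_2$.

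On the other hand, fix $\varepsilon>0$ with $(C+1)\varepsilon<1$. By $(i)$ there is an index $N_2\ge N_0$ such that $|F_k|\le \varepsilon^{k}$ for all $k\ge N_2$, whence $S_N\le \sum_{k>N}\varepsilon^k=\varepsilon^{N+1}/(1-\varepsilon)$ for all $N\ge N_2$. Combining the two estimates, $(C+1)^{-(N-N_2)}S_{N_2}\le \varepsilon^{N+1}/(1-\varepsilon)$, i.e. $S_{N_2}\le (C+1)^{N_2}\varepsilon(1-\varepsilon)^{-1}\big((C+1)\varepsilon\big)^{N}$ for all $N\ge N_2$; letting $N\to\infty$ the right‑hand side tends to $0$, which forces $S_{N_2}=0$. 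Hence $F_k=0$ for all $k>N_2$, and $N_1:=N_2+1$ works.

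There is no genuine obstacle here, since no analytic machinery is involved; the only point requiring a little care is the order of the quantifiers — one must choose $\varepsilon$ (depending on $C$) first, and only then pick the threshold $N_2\ge N_0$ so that the \emph{same} $N_2$ simultaneously provides the multiplicative estimate from $(ii)$ and the geometric majorant from $(i)$, guaranteeing that the competition $\big((C+1)\varepsilon\big)^{N}\to 0$ runs in the right direction.
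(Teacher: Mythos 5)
Your proof is correct; note that the paper itself does not prove this lemma but defers to \cite[Lemma 3]{bosuwan1}, and the argument there is essentially the same one you give: set $S_N=\sum_{k=N+1}^\infty|F_k|$, derive $S_{N-1}\le (C+1)S_N$ from (ii), iterate to get the geometric lower bound on the tails, and play it against the super-geometric upper bound from (i). The only blemish is the constant $(C+1)^{N_2}$ in your final display, where the computation actually yields $(C+1)^{-N_2}$; since $C+1>1$ this is still a valid (larger) constant independent of $N$, so the conclusion $S_{N_2}=0$ is unaffected.
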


The next lemma is the cornerstone for obtaining the inverse statements contained Theorem \ref{inverse}.
\begin{lemma}\label{mainlemma} Let $F\in \mathcal{H}(E)$ and $\mu\in \textup{\textbf{Reg}}_{1,2}^{m}(E).$ Fix $m\geq m^{*}\geq 1.$  Suppose that $F$ is not a rational function with at most $m^{*}-1$ poles and there exists a polynomial $Q_{m}$ of degree $m$ such that
\begin{equation}\label{3.33321}
\limsup_{n \rightarrow \infty} \|Q_{n,m,m^{*}}^{\mu}-Q_{m}\|^{1/n}=\theta<1.
\end{equation}
Then, either $F$ has exactly $m^{*}$ poles in $D_{\rho_{m^{*}}(F)}$ or $\rho_0(Q_{m} F)>\rho_{m^{*}}(F).$
\end{lemma}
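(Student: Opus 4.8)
Here is the plan I would follow. Throughout, write $\rho:=\rho_{m^*}(F)$, let $q^{*}$ be the monic polynomial whose zeros are exactly the poles of $F$ in $D_{\rho}$ counted with multiplicity, put $\tilde m:=\deg q^{*}\le m^{*}$, and set $g:=q^{*}F\in\mathcal{H}(E)$, so that $g$ is holomorphic in $D_{\rho}$. If $\tilde m=m^{*}$ the first alternative holds, so in the rest of the argument I assume $\tilde m\le m^{*}-1$ and aim to prove $\rho_{0}(Q_{m}F)>\rho$ or reach a contradiction. The starting point is a single Fourier identity: writing $q^{*}(z)=\sum_{i=0}^{\tilde m}c_{i}z^{i}$ and using the defining relations $[z^{i}Q^{\mu}_{n,m,m^{*}}F]_{n}=0$ for $i=0,\dots,m^{*}-1$ (see Definition~\ref{incomsimu}) together with $\tilde m\le m^{*}-1$, we get
\[[Q^{\mu}_{n,m,m^{*}}g]_{n}=\sum_{i=0}^{\tilde m}c_{i}\,[z^{i}Q^{\mu}_{n,m,m^{*}}F]_{n}=0,\qquad n\in\mathbb{N}.\]
Combining this with the hypothesis \eqref{3.33321} and the integral representation of Lemma~\ref{usealot1}, for $1<\rho'<\rho_{0}(g)$ we obtain $[Q_{m}g]_{n}=[(Q_{m}-Q^{\mu}_{n,m,m^{*}})g]_{n}=\frac{1}{2\pi i}\int_{\Gamma_{\rho'}}(Q_{m}-Q^{\mu}_{n,m,m^{*}})(w)g(w)s_{n}(w)\,dw$, and the upper bound for $\|s_{n}\|_{\Gamma_{\rho'}}$ coming from $\mu\in\textup{\textbf{Reg}}_{1,2}(E)$ (cf. \eqref{asintlog2}) yields
\[\limsup_{n\to\infty}|[Q_{m}g]_{n}|^{1/n}\le\frac{\theta}{\rho'},\qquad 1<\rho'<\rho_{0}(g).\]

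Suppose first $\rho<\infty$ and, contrary to the second alternative, $\rho_{0}(Q_{m}F)=\rho$ (it is $\ge\rho$ by Lemma~\ref{lemma4}). Since multiplication by the nonzero polynomial $q^{*}$ does not change the maximal canonical domain of holomorphy, $\rho_{0}(Q_{m}g)=\rho_{0}(q^{*}Q_{m}F)=\rho_{0}(Q_{m}F)=\rho<\infty$, while $g$ holomorphic in $D_{\rho}$ gives $\rho_{0}(g)\ge\rho$. Applying Lemma~\ref{expan} to $Q_{m}g\in\mathcal{H}(E)$ gives $\limsup_{n}|[Q_{m}g]_{n}|^{1/n}=1/\rho_{0}(Q_{m}g)=1/\rho$; letting $\rho'\to\rho$ in the displayed bound forces $1/\rho\le\theta/\rho$, impossible since $\theta<1$. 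Hence $\rho_{0}(Q_{m}F)>\rho$, the second alternative.

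It remains to rule out $\rho=\rho_{m^{*}}(F)=\infty$ with $\tilde m\le m^{*}-1$. Then $F$ is meromorphic on $\mathbb{C}$ with at most $m^{*}-1$ poles, so by hypothesis $F$ is not rational, and therefore $g=q^{*}F$ is a transcendental entire function; thus $\rho_{0}(g)=\infty$ and the displayed bound gives $|[Q_{m}g]_{n}|^{1/n}\to 0$. The plan is to apply Lemma~\ref{trick} to the sequence $F_{N}:=[Q_{m}g]_{N}$: property $(i)$ is exactly what we just obtained, and the essential work is to establish property $(ii)$, i.e. $|[Q_{m}g]_{N}|\le C\sum_{k>N}|[Q_{m}g]_{k}|$ for all large $N$. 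This is the step where the finer hypothesis $\mu\in\textup{\textbf{Reg}}_{1,2}^{m}(E)$ is used: one starts from $[Q_{m}g]_{N}=[(Q_{m}-Q^{\mu}_{N,m,m^{*}})g]_{N}$, expands $(Q_{m}-Q^{\mu}_{N,m,m^{*}})g$ through the Fourier coefficients of $g$ and the Gram coefficients $\langle z^{\ell}p_{\nu},p_{N}\rangle$ (bounded by $\|z\|_{E}^{\ell}$, as in \eqref{banana100hougeqs}), and uses the uniform lower bound $\kappa_{n-m}/\kappa_{n}\ge c$ to absorb the finitely many indices $\nu<N$ so that only the tail $\nu>N$ effectively remains, the geometrically small factor $\|Q_{m}-Q^{\mu}_{N,m,m^{*}}\|$ providing the constant $C$. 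Granting $(ii)$, Lemma~\ref{trick} gives $[Q_{m}g]_{N}=0$ for all large $N$, so $Q_{m}g$ is a polynomial and $g=(Q_{m}g)/Q_{m}$ is rational, contradicting that $g$ is transcendental. The main obstacle is exactly this verification of $(ii)$: converting the single diagonal vanishing $[Q^{\mu}_{N,m,m^{*}}g]_{N}=0$ together with the geometric convergence of the denominators into a genuine self‑domination of $|[Q_{m}g]_{N}|$ by its tail, which is where the regularity assumptions on $\mu$ (including the $\kappa_{n-m}/\kappa_{n}$ bound) are needed.
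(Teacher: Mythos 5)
Your reduction to the two alternatives and your treatment of the case $\rho_{m^*}(F)<\infty$ follow the paper's argument closely and are correct: the identity $[Q^{\mu}_{n,m,m^{*}}q^{*}F]_n=0$ (valid because $\deg q^{*}\le m^{*}-1$), the integral representation of Lemma \ref{usealot1}, the bound \eqref{asintlog2} on $s_n$, and Lemma \ref{expan} applied to $Q_m q^{*}F$ together force $1/\rho_{m^{*}}(F)\le\theta/\rho_{m^{*}}(F)$, which is only possible when $\rho_{m^{*}}(F)=\infty$. The gap is in how you rule out $\rho_{m^{*}}(F)=\infty$. You propose to apply Lemma \ref{trick} to $F_N:=[Q_m g]_N$ with $g=q^{*}F$, but the self-domination $(ii)$, namely $|[Q_mg]_N|\le C\sum_{k>N}|[Q_mg]_k|$, is precisely what you do not prove, and the route you sketch cannot produce it: expanding $[(Q_m-Q^{\mu}_{N,m,m^{*}})g]_N$ through the Fourier coefficients $a_k=[g]_k$ and the Gram coefficients $\langle z^{\ell}p_k,p_N\rangle_\mu$ yields a bound of the form $|[Q_mg]_N|\le C\,\|Q_m-Q^{\mu}_{N,m,m^{*}}\|\sum_{k\ge N-m+1}|a_k|$, i.e.\ a domination by the tail of the coefficients of $g$, not of $Q_mg$, and there is no mechanism for converting one tail into the other (division by $Q_m$ does not act on Fourier coefficients). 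Moreover, the lower bound $\kappa_{n-m}/\kappa_n\ge c$ plays no role in that expansion, so your stated use of it to ``absorb the indices $\nu<N$'' has no content.

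The paper applies Lemma \ref{trick} to the coefficients $a_N=[q_{m^{*}}F]_N$ themselves, and the inequality $(ii)$ comes not from the difference $Q_m-Q^{\mu}_{n,m,m^{*}}$ but from the exact orthogonality relation: writing $Q^{\mu}_{n,m,m^{*}}=\sum_{j=0}^{m}b_{n,j}z^{j}$ with $b_{n,m}=1$ and using $\langle z^{j}p_k,p_n\rangle_\mu=0$ for $j+k<n$, one isolates the single surviving term with $k=n-m$, $j=m$, whose coefficient is exactly $\kappa_{n-m}/\kappa_n$, so that
$$0=[Q^{\mu}_{n,m,m^{*}}q_{m^{*}}F]_n=\frac{\kappa_{n-m}}{\kappa_n}\,a_{n-m}+\sum_{k=n-m+1}^{\infty}\sum_{j=0}^{m}a_k\,b_{n,j}\,\langle z^{j}p_k,p_n\rangle_\mu .$$
The hypothesis $\mu\in\textup{\textbf{Reg}}_{1,2}^{m}(E)$ gives $\kappa_{n-m}/\kappa_n\ge c>0$, Cauchy--Schwarz bounds the Gram coefficients by $\|z^{j}\|_E$, and the boundedness of the $b_{n,j}$ coming from \eqref{3.33321} then yields $|a_{N}|\le C\sum_{k>N}|a_k|$ with $N=n-m$. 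Lemma \ref{trick} gives $a_N=0$ for all large $N$, so $q_{m^{*}}F$ is a polynomial and $F$ is rational with at most $m^{*}-1$ poles, contradicting the hypothesis. You need to redirect your step $(ii)$ to this sequence; as written, the decisive step of your argument is missing.
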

\begin{proof}[Proof of Lemma \ref{mainlemma}] From Lemma \ref{lemma4}, we know that $\rho_{0}(Q_{m} F)\geq \rho_{m^{*}}(F).$ Assume that $\rho_{0}(Q_{m} F)=\rho_{m^{*}}(F).$ Let us show that $F$ has exactly $m^{*}$ poles in $D_{\rho_{m^{*}}(F)}.$
To the contrary, suppose that $F$ has in $D_{\rho_{m^{*}}(F)}$ at most $m^{*}-1$ poles. Then, there exists a polynomial $q_{m^{*}}$ with $\deg q_{m^{*}}< m^{*}$ such that
$$\rho_0(q_{m^{*}} F)=\rho_{m^{*}}(F)=\rho_0(Q_{m}q_{m^{*}} F).$$ Since $\deg q_{m^{*}}< m^{*},$ by the definition of $Q_{n,m,m^{*}}^{\mu},$ $[Q_{n,m,m^{*}}^{\mu} q_{m^{*}} F]_n=0.$ Take $1< \rho< \rho_{m^{*}}(F).$ Then, by Lemma \ref{expan} and \eqref{usealot},
$$\frac{1}{\rho_{m^{*}}(F)}=\limsup_{n \rightarrow \infty} |[Q_{m}q_{m^{*}} F]_n|^{1/n}=\limsup_{n \rightarrow \infty} |[Q_{m}q_{m^{*}} F-Q_{n,m,m^{*}}^{\mu} q_{m^{*}} F]_n|^{1/n}$$
$$=\limsup_{n \rightarrow \infty} \left| \frac{1}{2\pi i}  \int_{\Gamma_{\rho}} (Q_{m}-Q_{n,m,m^{*}}^{\mu})(z)  q_{m^{*}}(z) F(z) s_n(z)   dz\right|^{1/n}.$$
From the above relation, using \eqref{asymps} and \eqref{3.33321}, it is easy to show that
$$\frac{1}{\rho_{m^{*}}(F)}\leq \frac{\theta}{\rho_{m^{*}}(F)},$$ which is possible only if $\rho_{m^{*}}(F)=\infty.$

Now, let us show that this is impossible. From \eqref{3.33321}, without loss of generality, we can assume that $\deg Q_{n,m,m^{*}}^{\mu}=m.$ Set
$$q_{m^{*}}(z)F(z) :=\sum_{k=0}^{\infty} a_k p_k(z)$$
and
$$Q_{n,m,m^{*}}^{\mu}(z):=\sum_{j=0}^{m}b_{n,j}z^{j},$$
where $b_{n,m}=1.$ From  \eqref{3.33321}, there exists $n_1\in \mathbb{N},$
\begin{equation}\label{suppart}
\sup\{|b_{n,j}|: 0\leq j \leq m, \, n \geq n_1\}\leq c_{1}.
\end{equation}
 Recall that $[Q_{n,m,m^{*}}^{\mu} q_{m^{*}} F]_n=0.$ Therefore,
$$0=[Q_{n,m,m^{*}}^{\mu} q_{m^{*}} F]_n=\sum_{k=0}^{\infty} \sum_{j=0}^{m} a_k b_{n,j} [ z^{j} p_k]_n=\sum_{k=n-m}^{\infty} \sum_{j=0}^{m} a_k b_{n,j} [ z^{j} p_k]_n$$
\begin{equation}\label{almostdone}
=\sum_{k=n-m}^{\infty} \sum_{j=0}^{m} a_k b_{n,j} \langle z^{j} p_k, \, p_n\rangle_{\mu}=\frac{\kappa_{n-m}}{\kappa_{n}} a_{n-m}+\sum_{k=n-m+1}^{\infty} \sum_{j=0}^{m} a_k b_{n,j} \langle z^{j} p_k, \, p_n\rangle_{\mu}.
\end{equation}
By the Cauchy-Schwarz inequality and the orthonormality of $p_{n},$ for all $k,n\geq 0$ and $j\in \{1,\ldots,m\},$
\begin{equation}\label{CauchySchwarz}
|\langle z^{j} p_k, \, p_n\rangle_{\mu}|\leq \|z^{j}\|_E  \langle p_k, \, p_k\rangle_{\mu}^{1/2} \langle  p_n, \, p_n\rangle_{\mu}^{1/2}= \|z^{j}\|_E\leq c_{2}.
\end{equation}
By \eqref{below}, there exists $n_2\in \mathbb{N}$ such that for all $n \geq n_2,$
\begin{equation}\label{capacity}
\frac{\kappa_{n-m}}{\kappa_{n}}\geq c_{3}>0.
\end{equation}
Combining \eqref{suppart}, \eqref{CauchySchwarz}, and \eqref{capacity}, it follows from \eqref{almostdone} that
$$|a_{n-m}|\leq c_{4}\sum_{k=n-m+1}^{\infty } |a_{k}|.$$
 Setting $N=n-m,$ we obtain
 $$|a_{N}|\leq c_{4}\sum_{k=N+1}^{\infty } |a_{k}|.$$ By Lemma \ref{trick}, since $\lim_{N \rightarrow \infty} |a_N|^{1/N}=0,$ there exists $N_1\in \mathbb{N}$ such that $a_{N}=0$ for all $N \geq N_1,$ which implies that $q_{m^{*}}F$ is a polynomial and $F$ is a rational function with at most $m^{*}-1$ poles. This contradicts the assumption that $F$ is not a rational function with at most $m^{*}-1$ poles. Then, $F$ has exactly $m^{*}$ poles in $D_{\rho_{m^{*}}(F)}.$
\end{proof}

\subsection{Polynomial independence}

Let us introduce the concept of polynomial independence of a vector of functions.
\begin{definition}\label{polydef}\textup{
A vector $\textup{\textbf{F}}\in \mathcal{H}(E)^d$ is said to be \emph{polynomially independent with respect to $\textup{\textbf{m}}=(m_1,\ldots,m_d)\in \mathbb{N}^d$} if there do not exist polynomials $v_1,\ldots,v_d,$ at least one of which is non-null, such that
\begin{enumerate}
\item [(i)] $\deg v_i < m_i,$ $i=1,\ldots,d,$
\item [(ii)] $\sum_{i=1}^d v_i f_i$ is a polynomial.
\end{enumerate}}
\end{definition}
Note that if $\textup{\textbf{F}}$ is polynomially independent, then for each $i=1,\ldots,d,$ $F_i$ is not a rational function with at most $m_i-1$ poles.

The following lemma  reduces the use of  polynomial combinations in \eqref{polycom} to that of  linear combinations.

\begin{lemma}\label{1reduce} Let $\textup{\textbf{F}}\in \mathcal{H}(E)^d,$ $\mu\in\mathcal{M}(E),$ and fix a multi-index $\textup{\textbf{m}}\in \mathbb{N}^d.$ Set
\begin{equation}\label{definefunction}
\overline{\textup{\textbf{F}}}:=(F_1,\ldots,z^{m_1-1}F_1, F_2,\ldots, z^{m_d-1}F_d)=(f_1,f_2,\ldots,f_{|\textup{\textbf{m}}|})
\end{equation}
and define an associated multi-index
\begin{equation}\label{definemulti}
\overline{\textup{\textbf{m}}}:=(1,1,\ldots,1)
\end{equation} with $|\overline{\textup{\textbf{m}}}|=|\textup{\textbf{m}}|.$ Then:
\begin{enumerate}
\item [(i)] $\textup{\textbf{F}}$ is polynomially independent with respect to $\textup{\textbf{m}}$ if and only if $\overline{\textup{\textbf{F}}}$ is polynomially independent with respect to $\overline{\textup{\textbf{m}}}.$
\item [(ii)] the poles of $\textup{\textbf{F}}$ and their orders are the same as the poles of $\overline{\textup{\textbf{F}}}$ and their orders.
\item [(iii)] $\rho_{m}(\textup{\textbf{F}})=\rho_{m}(\overline{\textup{\textbf{F}}}),$ for all $m \in \mathbb{N}\cup\{0\}.$
\item [(iv)] the systems of equations that define $Q_{n,|\textup{\textbf{m}}|}^{\mu}$ for $\textup{\textbf{F}}$ and  $\textup{\textbf{m}}$, and $Q_{n,|\overline{\textup{\textbf{m}}}|}^{\mu}$ for $\overline{\textup{\textbf{F}}}$ and $\overline{\textup{\textbf{m}}}$ are the same.
\item [(v)]  the system poles of $\textup{\textbf{F}}$ with respect to $\textup{\textbf{m}}$ and their orders  are the same as the system poles  of $\overline{\textup{\textbf{F}}}$ with respect to $\overline{\textup{\textbf{m}}}$ and their orders.
\end{enumerate}
\end{lemma}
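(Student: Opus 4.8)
The plan is to observe that all five assertions follow from a single structural fact: a polynomial combination $\sum_{i=1}^{d} v_i F_i$ with $\deg v_i < m_i$ is \emph{exactly} the same object as a linear combination $\sum_{k=1}^{|\textup{\textbf{m}}|} c_k f_k$ of the augmented system $\overline{\textup{\textbf{F}}}$ with scalar coefficients $c_k \in \mathbb{C}$. Indeed, writing $v_i(z) = \sum_{j=0}^{m_i-1} c_{i,j} z^j$, we have $v_i F_i = \sum_{j=0}^{m_i-1} c_{i,j} (z^j F_i)$, and the functions $z^j F_i$ ($j=0,\ldots,m_i-1$, $i=1,\ldots,d$) are precisely the components $f_1,\ldots,f_{|\textup{\textbf{m}}|}$ of $\overline{\textup{\textbf{F}}}$. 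Since $\overline{\textup{\textbf{m}}} = (1,\ldots,1)$, the admissible coefficient polynomials for $\overline{\textup{\textbf{F}}}$ relative to $\overline{\textup{\textbf{m}}}$ are exactly the constants, so the families of polynomial combinations for $(\textup{\textbf{F}},\textup{\textbf{m}})$ and for $(\overline{\textup{\textbf{F}}},\overline{\textup{\textbf{m}}})$ coincide as sets of functions. Moreover this identification is a bijection at the level of coefficient vectors (the map $(v_1,\ldots,v_d) \mapsto (c_{1,0},\ldots,c_{d,m_d-1})$ is linear and invertible), and a coefficient vector is non-null for one side if and only if it is non-null for the other.

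From this, (i) is immediate: the defining condition of polynomial independence — no non-null admissible combination is a polynomial — refers only to the set of admissible combinations, which we have just shown is the same on both sides. For (iv), the homogeneous linear system defining $Q_{n,|\textup{\textbf{m}}|}^{\mu}$ is \eqref{simu3}, namely $\langle Q\, z^k F_i,\, p_n\rangle_\mu = 0$ for $k=0,\ldots,m_i-1$, $i=1,\ldots,d$; for $\overline{\textup{\textbf{F}}}$ with $\overline{\textup{\textbf{m}}}$ the analogous system is $\langle Q\, f_\ell,\, p_n\rangle_\mu = 0$ for $\ell=1,\ldots,|\textup{\textbf{m}}|$, and since $\{f_\ell\} = \{z^k F_i\}$ these are literally the same equations on the same unknowns (the $|\textup{\textbf{m}}|+1$ coefficients of $Q$). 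For (ii) and (iii): each $z^j F_i$ has the same poles as $F_i$ with the same or lower orders (multiplication by $z^j$ never creates a pole and never raises an order, and cannot lower the order of any pole at $z\neq 0$; a pole at the origin is excluded from consideration since system poles lie in $\mathbb{C}\setminus\{0\}$ — or one notes $z=0$ is never a pole of an element of $\mathcal{H}(E)^d$ unless $0\notin E$, in which case one argues directly), so $\overline{\textup{\textbf{F}}}$ and $\textup{\textbf{F}}$ have the same poles with the same orders in every $D_\rho$; hence $\rho_m(\overline{\textup{\textbf{F}}}) = \rho_m(\textup{\textbf{F}})$ for all $m$.

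Finally (v) follows by unwinding the definition of system pole: $\xi$ is a system pole of order $\tau$ of $\textup{\textbf{F}}$ with respect to $\textup{\textbf{m}}$ iff $\tau$ is the largest integer such that for each $t=1,\ldots,\tau$ some admissible polynomial combination is holomorphic on a neighborhood of $\overline{D}_{|\Phi(\xi)|}$ with a pole of exact order $t$ at $\xi$; by the bijection of admissible combinations established above, this condition is word-for-word the same for $(\overline{\textup{\textbf{F}}},\overline{\textup{\textbf{m}}})$, where ``admissible'' now means a scalar linear combination of the $f_\ell$. Hence the system poles and their orders coincide. I do not anticipate a genuine obstacle here; the only point requiring a little care is the treatment of a possible pole at the origin in parts (ii)--(iii) and (v), which is handled either by noting that elements of $\mathcal{H}(E)$ are holomorphic near $E$ (so if $0 \in E$ there is no issue), or, if $0 \notin E$, by restricting attention to the canonical domains $D_\rho$ and observing that the relevant region $\overline{D}_{|\Phi(\xi)|}$ and the comparison with $F_i$ are unaffected, since system poles are required to differ from $0$ in Definition \ref{SP} and multiplication by $z^j$ is invertible away from the origin.
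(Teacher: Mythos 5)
Your proposal is correct and is precisely the argument the paper has in mind (the authors state the proof is straightforward and leave it to the reader): the identification of polynomial combinations $\sum_i v_i F_i$, $\deg v_i<m_i$, with scalar combinations $\sum_\ell c_\ell f_\ell$ is the single observation from which (i), (iv) and (v) follow, and (ii)--(iii) follow since multiplication by $z^j$ preserves poles and their orders away from the origin. Your slight worry about a pole at $z=0$ dissolves even more simply than you indicate: each $F_i$ itself occurs among the components $f_\ell$ (the case $j=0$), and $z^j F_i$ never has a pole of higher order than $F_i$ at any point, so the maximal pole order at every point --- including the origin --- is the same for $\textup{\textbf{F}}$ and $\overline{\textup{\textbf{F}}}$.
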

The proof of the previous lemma is straightforward and we leave it to the reader.

\begin{lemma}\label{polylem} Let $\textup{\textbf{F}}\in \mathcal{H}(E)^d,$ $\mu\in \mathcal{M}(E),$ and fix a multi-index $\textup{\textbf{m}}\in \mathbb{N}^d.$ Suppose that for all $n \geq n_0,$ the polynomial $Q_{n,|\textup{\textbf{m}}|}^{\mu}$ is unique and $\deg Q_{n,|\textup{\textbf{m}}|}^{\mu}=|\textup{\textbf{m}}|.$ Then the system $\textup{\textbf{F}}$ is polynomially independent with respect to $\textup{\textbf{m}}.$
\end{lemma}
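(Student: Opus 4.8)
The plan is to prove the contrapositive: assuming $\textbf{F}$ is \emph{not} polynomially independent with respect to $\textbf{m}$, I will show that $Q_{n,|\textbf{m}|}^{\mu}$ cannot be simultaneously unique and of degree $|\textbf{m}|$ for all large $n$. The first step is a reduction, via Lemma \ref{1reduce}, to the multi-index $\overline{\textbf{m}}=(1,\ldots,1)$ and the system $\overline{\textbf{F}}=(f_1,\ldots,f_M)$ of \eqref{definefunction}--\eqref{definemulti}, where $M=|\textbf{m}|$: by part (iv) the defining systems for $Q_{n,|\textbf{m}|}^{\mu}$ and $Q_{n,|\overline{\textbf{m}}|}^{\mu}$ coincide, so ``uniqueness and maximal degree for $n\ge n_0$'' is literally the same hypothesis for both pairs; and by part (i), polynomial independence of $\textbf{F}$ with respect to $\textbf{m}$ is equivalent to that of $\overline{\textbf{F}}$ with respect to $\overline{\textbf{m}}$. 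Hence it suffices to treat $\overline{\textbf{F}}$ and $\overline{\textbf{m}}$.

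So suppose $\overline{\textbf{F}}$ is not polynomially independent with respect to $\overline{\textbf{m}}$; since $\deg v_i<1$ forces the $v_i$ to be constants, there are scalars $c_1,\ldots,c_M$, not all zero, with $g:=\sum_{j=1}^{M}c_j f_j$ a polynomial. Put $\ell=\deg g$ (and $\ell=0$ if $g\equiv0$) and $i_0=\max\{j:c_j\neq0\}$. The crucial observation is that for any polynomial $Q$ with $\deg Q\le M$ and any integer $n>M+\ell$ the polynomial $Qg$ has degree $<n$, hence is $\mu$-orthogonal to $p_n$, and therefore by linearity of the inner product
\[\sum_{j=1}^{M}c_j\,[Qf_j]_n=[Qg]_n=0,\qquad n>M+\ell.\]
Now recall from \eqref{simu3} that the coefficient vector of $Q_{n,|\textbf{m}|}^{\mu}$ is a nonzero solution of the homogeneous linear system $[Qf_j]_n=0$, $j=1,\ldots,M$, in the $M+1$ coefficients of $Q$. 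For $n>M+\ell$ the displayed identity shows that the $i_0$-th equation of this system is a linear combination of the remaining $M-1$; hence the solution space $V_n$ has dimension at least $(M+1)-(M-1)=2$. Picking two linearly independent elements of $V_n$ (both necessarily nonzero) and normalizing each to have leading coefficient $1$ produces two \emph{distinct} monic polynomials of degree $\le M$ satisfying the defining relations of $Q_{n,|\textbf{m}|}^{\mu}$ — distinct because linearly independent polynomials are not scalar multiples of one another. Thus $Q_{n,|\textbf{m}|}^{\mu}$ is not uniquely determined, and since this happens for every $n>M+\ell$ it contradicts the hypothesis. Therefore $\overline{\textbf{F}}$, and hence $\textbf{F}$, is polynomially independent with respect to the given multi-index.

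I do not expect a genuine obstacle here: beyond the bookkeeping of the reduction step, the only fact used about $\mu$ is that a polynomial of degree $<n$ is orthogonal to $p_n$, and the rest is elementary linear algebra (counting equations versus unknowns). The single point that deserves to be spelled out carefully is why $\dim V_n\ge2$ forces non-uniqueness of the leading-coefficient-normalized denominator, which is exactly the last sentence of the previous paragraph.
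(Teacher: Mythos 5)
Your proof is correct and follows essentially the same route as the paper: reduce to $\overline{\textbf{F}}$ and $\overline{\textbf{m}}=(1,\ldots,1)$ via Lemma \ref{1reduce}, then use the polynomial relation $\sum_j c_j f_j=g$ to show that, for $n$ large, one of the $|\textbf{m}|$ defining orthogonality conditions is a linear combination of the others. The only (immaterial) difference is that you conclude by a rank count that the solution space has dimension at least two and so contradict uniqueness, whereas the paper solves the remaining $|\textbf{m}|-1$ equations to produce a nonzero admissible denominator of degree at most $|\textbf{m}|-1$ and contradicts the degree hypothesis.
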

\begin{proof}[Proof of Lemma \ref{polylem}] From Lemma \ref{1reduce}, without loss of generality, we consider $\overline{\textup{\textbf{F}}}$ as defined in \eqref{definefunction}  and $\overline{\textup{\textbf{m}}}$ as defined in \eqref{definemulti}. Notice that $Q_{n,|\textup{\textbf{m}}|}^{\mu}=Q_{n,|\overline{\textup{\textbf{m}}}|}^{\mu}.$

Suppose that there exist $c_i,$ $i=1,\ldots,|\overline{\textup{\textbf{m}}}|,$ such that $\sum_{i=1}^{|\overline{\textup{\textbf{m}}}|} c_i f_i$ is a polynomial, say $q.$ Without loss of generality, we can assume that $c_1\not=0.$ Then,
$$f_1=p-\sum_{i=2}^{|\overline{\textup{\textbf{m}}}|} c_i' f_i,$$
where $c_i':=c_i/c_1$ and we denote by $N$ the degree of $p=q/c_1.$

On the other hand, the homogenoeous system of linear equations
$$\langle Q_{n} f_i,\, p_{n} \rangle_{\mu}=0,  \quad \quad i=2,\ldots,|\overline{\textup{\textbf{m}}}|,$$
where $\deg(Q_{n})\leq |\overline{\textup{\textbf{m}}}|-1, Q_{n}\not\equiv 0,$ has a solution, say a monic polynomial $\tilde{Q}_n.$ Moreover, for $n\geq |\overline{\textup{\textbf{m}}}|+N,$
$$\langle \tilde{Q}_{n} f_1,\, p_{n} \rangle_{\mu}=[\tilde{Q}_{n}f_1]_n=[\tilde{Q}_{n}p-\sum_{i=2}^{|\overline{\textup{\textbf{m}}}|} c_i'\tilde{Q}_{n}  f_i]_n$$
$$=[\tilde{Q}_{n}p]_n-\sum_{i=2}^{|\overline{\textup{\textbf{m}}}|} c_i'[\tilde{Q}_{n}  f_i]_n=0,$$ which means $\tilde{Q}_n=Q_{n,|\textup{\textbf{m}}|}^{\mu}.$ However,  $\deg(\tilde{Q}_{n})\leq |\overline{\textup{\textbf{m}}}|-1$ which contradicts our assumption on $Q_{n,|\textup{\textbf{m}}|}^{\mu}.$ This completes the proof.
\end{proof}

A direct consequence of Lemma \ref{mainlemma} is the following.
\begin{lemma}\label{mainlemma1} Let $\textup{\textbf{m}}=(m_1,\ldots,m_d)\in \mathbb{N}^d$ be a fixed multi-index, $\mu\in \textup{\textbf{Reg}}_{1,2}^{|\textup{\textbf{m}}|}(E),$ and  $\textup{\textbf{F}}\in \mathcal{H}(E)^d.$  Suppose that $\textup{\textbf{F}}$ is polynomially independent with respect to $\textup{\textbf{m}}$ and there exists a polynomial $Q_{|\textup{\textbf{m}}|}$ of degree $|\textup{\textbf{m}}|$ such that
\begin{equation}\label{3.333211}
\limsup_{n \rightarrow \infty} \|Q_{n,|\textup{\textbf{m}}|}^{\mu}-Q_{|\textup{\textbf{m}}|}\|^{1/n}=\theta<1.
\end{equation}
Then, for each $i=1,\ldots,d,$ either $F_i$ has exactly $m_i$ poles in $D_{\rho_{m_i}(F_i)}$ or $\rho_0(Q_{|\textup{\textbf{m}}|} F_i)>\rho_{m_i}(F_i).$
\end{lemma}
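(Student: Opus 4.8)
The plan is to deduce the statement from Lemma \ref{mainlemma} applied separately to each component $F_i$, exploiting the observation recorded right after Definition \ref{incomsimu}: the polynomial $Q_{n,|\textup{\textbf{m}}|}^{\mu}$ is, for every $i$, a denominator of an $(n,|\textup{\textbf{m}}|,m_i)$ incomplete orthogonal Pad\'e approximant of $F_i$ with respect to $\mu$. So no new estimates are needed; the whole argument is a matter of matching hypotheses.

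Fix $i\in\{1,\ldots,d\}$. First I would check that $Q_{n,|\textup{\textbf{m}}|}^{\mu}$ is a legitimate choice of the denominator $Q_{n,|\textup{\textbf{m}}|,m_i}^{\mu}$ in Definition \ref{incomsimu} with $F=F_i$, $m=|\textup{\textbf{m}}|$, $m^{*}=m_i$. Indeed $\deg Q_{n,|\textup{\textbf{m}}|}^{\mu}\le|\textup{\textbf{m}}|$, $Q_{n,|\textup{\textbf{m}}|}^{\mu}\not\equiv 0$, and by \eqref{simu3} one has $\langle z^{k}Q_{n,|\textup{\textbf{m}}|}^{\mu}F_i,\,p_n\rangle_\mu=0$ for $k=0,\ldots,m_i-1$, which are precisely the relations defining such a denominator (the matching numerators being then uniquely determined, as in \eqref{usethisasdef}). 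Consequently the hypothesis \eqref{3.333211} of the present lemma says exactly that $\limsup_{n}\|Q_{n,|\textup{\textbf{m}}|,m_i}^{\mu}-Q_{|\textup{\textbf{m}}|}\|^{1/n}=\theta<1$, i.e.\ condition \eqref{3.33321} of Lemma \ref{mainlemma} holds with $Q_m=Q_{|\textup{\textbf{m}}|}$. The regularity requirement $\mu\in\textup{\textbf{Reg}}_{1,2}^{|\textup{\textbf{m}}|}(E)$ is literally $\mu\in\textup{\textbf{Reg}}_{1,2}^{m}(E)$ with $m=|\textup{\textbf{m}}|$, and, since $\textup{\textbf{F}}$ is polynomially independent with respect to $\textup{\textbf{m}}$, the remark following Definition \ref{polydef} guarantees that $F_i$ is not a rational function with at most $m_i-1$ poles. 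Thus all hypotheses of Lemma \ref{mainlemma} are met (with $F=F_i$, $m=|\textup{\textbf{m}}|$, $m^{*}=m_i$), and it gives that either $F_i$ has exactly $m_i$ poles in $D_{\rho_{m_i}(F_i)}$ or $\rho_0(Q_{|\textup{\textbf{m}}|}F_i)>\rho_{m_i}(F_i)$. Since $i$ was arbitrary, the proof is complete.

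The only delicate point — the ``main obstacle'', such as it is — is the bookkeeping in the first step: one must note that the \emph{single} polynomial $Q_{n,|\textup{\textbf{m}}|}^{\mu}$ simultaneously plays the role of $Q_{n,|\textup{\textbf{m}}|,m_i}^{\mu}$ for all $i$ at once, which is exactly what \eqref{simu3} delivers, and that the non-uniqueness built into Definition \ref{incomsimu} is harmless because Lemma \ref{mainlemma} only invokes the assumed geometric convergence of whatever denominator has been selected. Everything else is immediate.
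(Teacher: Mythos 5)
Your proof is correct and is precisely the argument the paper intends: the paper states this lemma as ``a direct consequence of Lemma \ref{mainlemma}'' with no written proof, relying on exactly the observation you make, namely that by \eqref{simu3} the polynomial $Q_{n,|\textup{\textbf{m}}|}^{\mu}$ is a denominator of an $(n,|\textup{\textbf{m}}|,m_i)$ incomplete orthogonal Pad\'e approximant of $F_i$, and that polynomial independence rules out $F_i$ being rational with at most $m_i-1$ poles. Your bookkeeping of the hypotheses (including the harmlessness of the non-uniqueness in Definition \ref{incomsimu}) is accurate, so nothing is missing.
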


Lemma \ref{numberof} below contains some straightforward consequences of the concept of system poles. Its proof is analogous to that of \cite[Lemma 3.5]{CacoqYsernLopez} so we will not dwell into details.

\begin{lemma}\label{numberof}
Given $\textup{\textbf{F}}\in \mathcal{H}(E)^d$ and $\textup{\textbf{m}}\in \mathbb{N}^d,$ $\textup{\textbf{F}}$ can have at most $|\textup{\textbf{m}}|$ system poles with respect to $\textup{\textbf{m}}$ (counting their order). Moreover, if the system $\textup{\textbf{F}}$ has exactly $|\textup{\textbf{m}}|$ system poles with respect to $\textup{\textbf{m}}$ and $\xi$ is a system pole of order $\tau,$ then for all $s>\tau$ there can be no polynomial combination of the form \eqref{polycom} holomorphic on a neighborhood of $\overline{D}_{|\Phi(\xi)|}$ except for a pole at $z=\xi$ of exact order $s.$
\end{lemma}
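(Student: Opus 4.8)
The starting point is that every polynomial combination occurring in \eqref{polycom} lies in one fixed finite-dimensional space. Indeed, the assignment $(v_1,\ldots,v_d)\mapsto\sum_{i=1}^{d}v_iF_i$, with $\deg v_i<m_i$, is linear on a domain of dimension $m_1+\cdots+m_d=|\textbf{m}|$, so the linear span $\mathcal{V}$ of all such combinations satisfies $\dim\mathcal{V}\le|\textbf{m}|$. Both assertions will follow by producing enough linearly independent elements of $\mathcal{V}$ out of the system poles.

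For the first assertion I would fix an arbitrary finite collection $\xi_1,\ldots,\xi_k$ of distinct system poles with orders $\tau_1,\ldots,\tau_k$ and, for each $j$ and each $t=1,\ldots,\tau_j$, choose (by the very definition of a system pole) a combination $G_{j,t}\in\mathcal{V}$ that is holomorphic on a neighborhood of $\overline{D}_{|\Phi(\xi_j)|}$ except for a pole of exact order $t$ at $\xi_j$. The claim is that these $\tau_1+\cdots+\tau_k$ functions are linearly independent. To prove it, relabel so that $|\Phi(\xi_1)|\le\cdots\le|\Phi(\xi_k)|$ and observe that if $j'\ne j$ and $|\Phi(\xi_{j'})|\le|\Phi(\xi_j)|$ then $\xi_{j'}\in\overline{D}_{|\Phi(\xi_j)|}\setminus\{\xi_j\}$, so $G_{j,t}$ is holomorphic at $\xi_{j'}$. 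Suppose $\sum_{j,t}c_{j,t}G_{j,t}\equiv0$ and look at a punctured neighborhood of $\xi_1$: every term with $j\ne1$ is holomorphic there, hence $\sum_{t}c_{1,t}G_{1,t}$ is holomorphic at $\xi_1$; reading off, in turn, the coefficients of $(z-\xi_1)^{-\tau_1},(z-\xi_1)^{-(\tau_1-1)},\ldots$ and using that $G_{1,t}$ has a pole of \emph{exact} order $t$ forces $c_{1,\tau_1}=\cdots=c_{1,1}=0$. Discarding these terms and repeating at $\xi_2,\xi_3,\ldots$ kills all the coefficients. Therefore $\tau_1+\cdots+\tau_k\le\dim\mathcal{V}\le|\textbf{m}|$, and since the finite collection was arbitrary, $\textbf{F}$ has at most $|\textbf{m}|$ system poles counting order.

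For the second assertion I would argue by contradiction. Assume $\textbf{F}$ has exactly $|\textbf{m}|$ system poles counting order, list them as $\xi_1,\ldots,\xi_k$ with orders $\tau_1,\ldots,\tau_k$ (so $\tau_1+\cdots+\tau_k=|\textbf{m}|$), say $\xi=\xi_{j_0}$ with order $\tau=\tau_{j_0}$, and suppose there exists $G\in\mathcal{V}$ holomorphic on a neighborhood of $\overline{D}_{|\Phi(\xi_{j_0})|}$ except for a pole of exact order $s>\tau$ at $\xi_{j_0}$. Choose the $G_{j,t}$ as above and adjoin $G$ to the sub-family attached to $\xi_{j_0}$, obtaining $|\textbf{m}|+1$ elements of $\mathcal{V}$. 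Running the same ordered elimination on a vanishing linear combination of them, at the pole $\xi_{j_0}$ the coefficient of $(z-\xi_{j_0})^{-s}$ is produced solely by $G$ (each $G_{j_0,t}$ has a pole of order $\le\tau<s$ there), so the coefficient of $G$ must vanish; the $G_{j_0,t}$ are then removed by the usual descending argument, and all remaining poles are treated exactly as in the first part. Hence the $|\textbf{m}|+1$ functions are linearly independent, contradicting $\dim\mathcal{V}\le|\textbf{m}|$, and no such $G$ can exist.

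The only delicate point is the bookkeeping in the linear-independence steps: one must ensure that, at each stage, every term still present is genuinely holomorphic at the pole whose principal part is being isolated, which is precisely what processing the poles in order of non-decreasing $|\Phi|$ guarantees — provided the case $|\Phi(\xi_{j'})|=|\Phi(\xi_j)|$ with $\xi_{j'}\ne\xi_j$ is handled on the same footing as the strict inequality. Everything else is routine, and the argument is parallel to that of \cite[Lemma 3.5]{CacoqYsernLopez}.
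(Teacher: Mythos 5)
Your proof is correct and follows essentially the same route as the paper, which simply defers to the dimension-count/linear-independence argument of \cite[Lemma 3.5]{CacoqYsernLopez}: the combinations \eqref{polycom} live in a space of dimension at most $|\textup{\textbf{m}}|$, and the functions attached to the system poles (processed in order of non-decreasing $|\Phi(\xi_j)|$, isolating principal parts from the highest order down) are linearly independent, with the extra function of exact order $s>\tau$ yielding the contradiction in the second assertion. No gaps.
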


\subsection{Proof of (b) implies (a)}

The following theorem contains the inverse statements.
\begin{thm}\label{inverse} Let $\textup{\textbf{F}}=(F_1,F_2,\ldots,F_d)\in \mathcal{H}(E)^d,$  $\textup{\textbf{m}}\in \mathbb{N}^d,$ be a fixed multi-index, and $\mu\in \textup{\textbf{Reg}}_{1,2}^{|\textup{\textbf{m}}|}(E).$  Suppose that the denominators $Q_{n,|\textup{\textbf{m}}|}^{\mu}$ of the orthogonal Hermite-Pad\'e approximants are uniquely determined for all sufficiently large $n,$ and there exists a polynomial $Q_{|\textup{\textbf{m}}|}$ of degree $|\textup{\textbf{m}}|$ such that
$$\limsup_{n \rightarrow \infty} \|Q_{n,|\textup{\textbf{m}}|}^{\mu}-Q_{|\textup{\textbf{m}}|}\|^{1/n}=\theta<1.$$
Then, $\textup{\textbf{F}}$ has exactly $|\textup{\textbf{m}}|$ system poles with respect to $\textup{\textbf{m}}$ counting multiplicities and $Q_{|\textup{\textbf{m}}|}=Q_{\textup{\textbf{m}}}^{\textup{\textbf{F}}}.$
\end{thm}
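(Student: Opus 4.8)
\emph{Reduction and polynomial independence.} Since $Q_{n,|\textup{\textbf{m}}|}^{\mu}$ is monic with $\deg\le|\textup{\textbf{m}}|$ and converges coefficientwise with geometric rate to the degree-$|\textup{\textbf{m}}|$ polynomial $Q_{|\textup{\textbf{m}}|}$, its coefficient of $z^{|\textup{\textbf{m}}|}$ is eventually bounded away from $0$; hence $\deg Q_{n,|\textup{\textbf{m}}|}^{\mu}=|\textup{\textbf{m}}|$ for all large $n$ and $Q_{|\textup{\textbf{m}}|}$ is monic. By Lemma~\ref{polylem}, $\textup{\textbf{F}}$ is polynomially independent with respect to $\textup{\textbf{m}}$, so no $F_i$ is rational with at most $m_i-1$ poles. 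Using Lemma~\ref{1reduce} I pass to the reduced data $\overline{\textup{\textbf{F}}}=(f_1,\dots,f_N)$, $\overline{\textup{\textbf{m}}}=(1,\dots,1)$, $N=|\textup{\textbf{m}}|$; writing $Q:=Q_{|\textup{\textbf{m}}|}$ and $Q_n:=Q_{n,|\textup{\textbf{m}}|}^{\mu}=Q_{n,|\overline{\textup{\textbf{m}}}|}^{\mu}$, the polynomial combinations in the definition of system pole become plain linear combinations $g=\sum_{i=1}^{N}c_if_i$, and it suffices to prove that $\overline{\textup{\textbf{F}}}$ has exactly $N$ system poles with respect to $\overline{\textup{\textbf{m}}}$ and $Q=Q_{\overline{\textup{\textbf{m}}}}^{\overline{\textup{\textbf{F}}}}$.

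\emph{Behaviour of linear combinations.} For each $i$, $Q_n$ is a denominator of an $(n,N,1)$ incomplete orthogonal Pad\'e approximant of $f_i$ (Definition~\ref{incomsimu}), so Lemma~\ref{lemma4} gives $\rho_0(Qf_i)\ge\rho_1(f_i)$ and Lemma~\ref{mainlemma1} the dichotomy ``$f_i$ has exactly one pole in $D_{\rho_1(f_i)}$, or $\rho_0(Qf_i)>\rho_1(f_i)$''; combining the two, the (at most one) pole of $f_i$ in $D_{\rho_1(f_i)}$ lies at a zero of $Q$ with order at most its multiplicity there. The same machinery applies to any nonzero combination $g=\sum_ic_if_i$, which by polynomial independence is not a polynomial and for which $Q_n$ is again an $(n,N,1)$ incomplete denominator: Lemma~\ref{mainlemma} then yields the two facts that underpin everything else, namely that no nonzero combination is entire, and that the singularity of smallest $|\Phi|$-modulus of a nonzero combination is a \emph{pole}, located at a zero of $Q$ (a branch point, or a pole not cancelled by $Q$, there would violate Lemma~\ref{mainlemma} via $\rho_0(Qg)\le\rho_1(g)$). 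Finally, since $Qg=\sum_ic_i(Qf_i)$, one has $\rho_0(Qg)\ge\rho^{*}:=\min_i\rho_0(Qf_i)$ for every $g$, so inside $D_{\rho^{*}}$ every pole of every combination sits at a zero of $Q$ with controlled order.

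\emph{Every zero of $Q$ is a system pole of at least its multiplicity.} List the distinct zeros $\zeta_1,\dots,\zeta_p$ of $Q$ with $|\Phi(\zeta_1)|\le\dots\le|\Phi(\zeta_p)|$, multiplicities $\kappa_1,\dots,\kappa_p$ ($\sum_j\kappa_j=N$), and set $\widehat Q_k:=\prod_{j\le k}(z-\zeta_j)^{\kappa_j}$. I argue by induction on $k$ that $\widehat Q_kf_i$ (hence $\widehat Q_kg$ for every combination) is holomorphic on a neighbourhood of $\overline D_{|\Phi(\zeta_k)|}$ for all $i$, and that $\zeta_k$ is a system pole of order at least $\kappa_k$. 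The base step, and the passage from level $k-1$ to level $k$, both use the analysis of the second step: the system-pole combinations produced at lower levels are subtracted from each $f_i$ to cancel its poles at $\zeta_1,\dots,\zeta_{k-1}$, and the incomplete-Pad\'e dichotomy applied to the resulting ``cleaned'' combinations forces $\widehat Q_kf_i$ to extend past $|\Phi(\zeta_k)|$, any newly exposed pole again landing at a zero of $Q$. Once holomorphy has been propagated to all zeros of $Q$, the map sending $(c_1,\dots,c_N)\in\mathbb{C}^{N}$ to the tuple of principal parts (of orders $\le\kappa_1,\dots,\le\kappa_p$) of $\sum_ic_if_i$ at $\zeta_1,\dots,\zeta_p$ is a well-defined linear map $\mathbb{C}^{N}\to\prod_j\mathbb{C}^{\kappa_j}$ with trivial kernel (a nonzero combination in the kernel has no pole at any zero of $Q$, contradicting the backbone fact that its nearest singularity is such a pole), hence an isomorphism; surjectivity lets me, for each $t=1,\dots,\kappa_k$, choose $(c_i)$ so that $\sum_ic_if_i$ has a pole of exact order $t$ at $\zeta_k$ and no pole at the other $\zeta_j$, and by the region control this combination is holomorphic on a neighbourhood of $\overline D_{|\Phi(\zeta_k)|}$ except for that pole.

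\emph{Conclusion and main difficulty.} Adding up, $Q_{\overline{\textup{\textbf{m}}}}^{\overline{\textup{\textbf{F}}}}$ has degree $\ge\sum_j\kappa_j=N$, while Lemma~\ref{numberof} bounds it by $N$; therefore $\overline{\textup{\textbf{F}}}$ has exactly $N$ system poles, each a zero of $Q$ of exactly its multiplicity, whence $Q=Q_{\overline{\textup{\textbf{m}}}}^{\overline{\textup{\textbf{F}}}}$, and undoing the reduction gives the statement for $\textup{\textbf{F}}$ and $\textup{\textbf{m}}$. The hard part is the inductive region control in the third step: propagating, from one $|\Phi|$-level to the next, the assertion that after dividing out the part of $Q$ already ``used up'' the combinations remain holomorphic with all singularities accounted for by zeros of $Q$. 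This needs quantitative inverse estimates of the type of Lemma~\ref{mainlemma} applied not to the $f_i$ but to combinations built from the system-pole combinations obtained at earlier levels, together with the uniqueness and full degree of $Q_n$; it is the analogue of the most delicate step in \cite{CacoqYsernLopez}, after which the surjectivity of the principal-part map and the counting via Lemma~\ref{numberof} are routine.
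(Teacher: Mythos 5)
Your overall strategy matches the paper's: reduce to $\overline{\textup{\textbf{F}}}$ and $\overline{\textup{\textbf{m}}}=(1,\dots,1)$, get polynomial independence from Lemma~\ref{polylem}, use the incomplete-Pad\'e dichotomy (Lemmas~\ref{lemma4}, \ref{mainlemma}, \ref{mainlemma1}) to show every ``first'' singularity of a nonzero combination is a pole at a zero of $Q_{|\textup{\textbf{m}}|}$, produce the system-pole combinations by linear algebra, and close with Lemma~\ref{numberof}. The first two steps are sound. But the third step --- which you yourself flag as the hard part --- is not merely unexecuted; as formulated it rests on a claim that is false in general. You assert inductively that $\widehat Q_k f_i$ is holomorphic on a neighbourhood of $\overline D_{|\Phi(\zeta_k)|}$ for \emph{every} $i$, and you then use this to define a principal-part map on all of $\mathbb{C}^{N}$. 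Neither the hypotheses nor the conclusion of the theorem give meromorphic continuation of each individual $f_i$ past all the zeros of $Q$. Concretely, take $d=2$, $\textup{\textbf{m}}=(1,1)$, with $F_1$ having a simple pole at $\zeta_1$ and a branch point at $w$, $F_2$ having a simple pole at $\zeta_2$, and $|\Phi(\zeta_1)|<|\Phi(w)|<|\Phi(\zeta_2)|$. This system has exactly two system poles ($\zeta_1$ via a combination with $c_1\neq0$, $\zeta_2$ via $F_2$ alone), so it satisfies (a) and hence (b); yet $(z-\zeta_1)(z-\zeta_2)F_1$ does not continue past $|\Phi(w)|$, your induction hypothesis fails at level $2$, and the map $(c_1,c_2)\mapsto(\text{principal parts at }\zeta_1,\zeta_2)$ is undefined on vectors with $c_1\neq0$. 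Subtracting lower-level system-pole combinations from $f_i$, as you propose, cancels poles but cannot remove such a branch point, so the ``cleaning'' does not rescue the induction. Likewise your bound $\rho_0(Qg)\ge\min_i\rho_0(Qf_i)$ only controls combinations up to $|\Phi(w)|$ in this example, not up to $|\Phi(\zeta_p)|$.

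The paper avoids this by never propagating regularity of the individual $f_i$. At each stage it solves the homogeneous system \eqref{eq21} that kills the principal parts of the \emph{current} layer, restricts attention to the subspace of combinations $\textup{\textbf{g}}_k=C^k\cdots C^1\overline{\textup{\textbf{F}}}^t$ spanned by the solutions, and applies Lemma~\ref{mainlemma} only to those; the domains $D_{\rho_0(\textup{\textbf{g}}_k)}$ grow strictly, the layers exhaust the $|\textup{\textbf{m}}|$ zeros of $Q_{|\textup{\textbf{m}}|}$ in finitely many steps, and surjectivity is obtained at the end by the rank argument ($n_k^*=n_k$ for all $k$, since otherwise a leftover nontrivial combination would, by Lemma~\ref{mainlemma}, have a pole at a zero of $Q_{|\textup{\textbf{m}}|}$ on the boundary of its domain of holomorphy, while all such zeros are already strictly inside). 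To repair your argument you must replace the global principal-part isomorphism on $\mathbb{C}^{N}$ by this nested-subspace version: injectivity/surjectivity is only meaningful, and only needed, on the solution space of the accumulated linear systems, because only those combinations are guaranteed to be meromorphic in the larger canonical domains.
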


\begin{proof}[Proof of Theorem \ref{inverse}] Due to Lemma \ref{1reduce}, without loss of generality, we can restrict our attention to the vector of functions
$$\overline{\textup{\textbf{F}}}:=(F_1,\ldots,z^{m_1-1}F_1, F_2,\ldots, z^{m_d-1}F_d)=(f_1,f_2,\ldots,f_{|\textup{\textbf{m}}|})
$$ and the associated multi-index
$$
\overline{\textup{\textbf{m}}}:=(1,1,\ldots,1)
$$
with $|\overline{\textup{\textbf{m}}}|=|\textup{\textbf{m}}|.$ Notice that $Q_{n,|\textup{\textbf{m}}|}^{\mu}=Q_{n,|\overline{\textup{\textbf{m}}}|}^{\mu}.$ Moreover, due to Lemma \ref{polylem}, we know that $\textup{\textbf{F}}$ is polynomially independent with respect to $\textup{\textbf{m}}$ which implies that $\overline{\textup{\textbf{F}}}$ is polynomially independent with respect to $\overline{\textup{\textbf{m}}}$ according to Lemma \ref{1reduce}.

The auxiliary results that we have obtained thus far allow us to mimic the proof employed in \cite{CacoqYsernLopez} to obtain a similar result for the case of row sequences of Hermite-Pad\'e approximations. For completeness we include the whole proof. The plan is as follows. First, we collect a set of $|\textup{\textbf{m}}|$ candidates to be system poles of $\overline{\textup{\textbf{F}}}$ (counting their orders) and prove that they are the zeros of $Q_{|\textup{\textbf{m}}|}.$ Secondly, we prove that all these points previously selected are actually system poles of $\overline{\textup{\textbf{F}}}$ which means that they are also system poles of $\textup{\textbf{F}}$ by Lemma \ref{1reduce}.

From Lemma \ref{mainlemma1}, for each $i=1,\ldots,|\textup{\textbf{m}}|,$ either $D_{\rho_1(f_i)}$ contains exactly one pole of $f_i$ and it is a zero of $Q_{|\textup{\textbf{m}}|},$ or $\rho_0(Q_{|\textup{\textbf{m}}|} f_i)>\rho_{1}(f_i).$ Hence, $D_{\rho_{0}(\overline{\textup{\textbf{F}}})}\not=\mathbb{C}$ and $Q_{|\textup{\textbf{m}}|}$ contains as zeros all the poles of $f_i$ on the boundary of $D_{\rho_{0}(f_i)}$ counting their order for $i=1,\ldots,|\textup{\textbf{m}}|.$ Moreover, the function $f_i$ cannot have on the boundary of $D_{0}(f_i)$ singularities other than poles. Hence, the poles of $\overline{\textup{\textbf{F}}}$ on the boundary of $D_{\rho_0(\overline{\textup{\textbf{F}}})}$ are all zeros of $Q_{|\textup{\textbf{m}}|}$ counting multiplicities and the boundary contains no other singularity except poles. Let us call them candidate system poles of $\overline{\textup{\textbf{F}}}$ and denote them by $a_1,\ldots,a_{n_1}$ repeated according to their order. They constitute the first layer of candidate system poles of $\overline{\textup{\textbf{F}}}.$

Since $\deg Q_{|\textup{\textbf{m}}|}=|\textup{\textbf{m}}|,$ $n_1\leq |\textup{\textbf{m}}|.$ If $n_1=|\textup{\textbf{m}}|,$ we are done. Let us assume that $n_1< |\textup{\textbf{m}}|$ and find coefficients $c_1,\ldots,c_{|\textup{\textbf{m}}|}$ such that
$$\sum_{i=1}^{|\textup{\textbf{m}}|} c_i f_i$$
is holomorphic in a neighborhood of $\overline{D}_{\rho_0(\overline{\textup{\textbf{F}}})}.$ Finding those $c_1,\ldots,c_{|\textup{\textbf{m}}|}$ reduces to solving a homogeneous system of $n_1$ linear equations with $|\textup{\textbf{m}}|$ unknowns. In fact, if $z=a$ is a candidate system pole of $\overline{\textup{\textbf{F}}}$ with multiplicity $\tau,$ we obtain $\tau$ equations choosing the coefficients $c_i$ so that
\begin{equation}\label{eq21}
\int_{|w-a|=\delta} (w-a)^{k} \left( \sum_{i=1}^{|\textup{\textbf{m}}|} c_i f_i(w)\right)dw=0, \quad \quad k=0,\ldots,\tau-1.
\end{equation}
We have the same type of  system of equations for each distinct candidate system pole on the boundary of $D_{\rho_0(\overline{\textup{\textbf{F}}})}.$ Combining these equations, we obtain a homogeneous system of $n_1$ linear equations with $|\textup{\textbf{m}}|$ unknowns. Moreover, this homogeneous system of linear equations has at least $|\textup{\textbf{m}}|-n_1$ linearly independent solutions, which we denote by $\textup{\textbf{c}}_{j}^{1},$ $j=1,\ldots, |\textup{\textbf{m}}|-n_1^{*},$ where $n_1^{*}\leq n_1$ denotes the rank of the system of equations.

Let
$$\textup{\textbf{c}}_{j}^{1}:=(c_{j,1}^1,\ldots, c_{j,|{\textup{\textbf{m}}}|}^{1}), \quad \quad j=1,\ldots,  |{\textup{\textbf{m}}}|-n_1^{*}.$$ Define the $(|{\textup{\textbf{m}}}|-n_1^{*})\times |{\textup{\textbf{m}}}|$ dimensional matrix
$$C^1:= \begin{pmatrix}
 \textup{\textbf{c}}_{1}^{1}  \\
  \vdots   \\
\textup{\textbf{c}}_{|{\textup{\textbf{m}}}|-n_1^{*}}^{1}
 \end{pmatrix}.$$
Define the vector  $\textup{\textbf{g}}_1$ of $|{\textup{\textbf{m}}}|-n_1^{*}$ functions given by
 $$\textup{\textbf{g}}_1^{t}:=C^1 \overline{\textup{\textbf{F}}}^t=(g_{1,1},\ldots,g_{1, |{\textup{\textbf{m}}}|-n_1^{*}})^{t},$$
 where $A^t$ denotes the transpose of the matrix $A.$ Since all the rows of $C^1$ are non-null and $\overline{\textup{\textbf{F}}}$ is polynomially independent with respect to $\overline{\textup{\textbf{m}}},$ none of the functions
 $$g_{1,j}=\sum_{i=1}^{|{\textup{\textbf{m}}}|} c_{j,i}^{1} f_i, \quad \quad j=1,\ldots,|{\textup{\textbf{m}}}|-n_1^{*},$$
 are  polynomials.

 Consider the canonical domain
 $$D_{\rho_0(\textup{\textbf{g}}_1)}=\bigcap_{j=1}^{|{\textup{\textbf{m}}}|-n_1^{*}} D_{\rho_0(g_{1,j})}.$$ Clearly, $ D_{\rho_0(\overline{\textup{\textbf{F}}})}\subsetneq D_{\rho_0(\textup{\textbf{g}}_1)}$ and $[Q_{n,|\overline{\textup{\textbf{m}}}|}^{\mu}g_{1,j}]_n=0$ for all  $j=1,\ldots,|{\textup{\textbf{m}}}|-n_1^{*}.$ Therefore, for each $j=1,\ldots,|\textup{\textbf{m}}|-n_1^{*},$ $Q_{n,|\overline{\textup{\textbf{m}}}|}^{\mu}$ is a denominator of an $(n,|\overline{\textup{\textbf{m}}}|,1)$ incomplete orthogonal Pad\'e approximant of $g_{1,j}$ with respect to $\mu$. Since all $g_{1,j}$ are not polynomials, by Lemma \ref{mainlemma} with $m^{*}=1,$ for each $j=1,\ldots,|{\textup{\textbf{m}}}|-n_1^{*},$ either $D_{\rho_1(g_{1,j})}$ contains exactly one pole of $g_{1,j}$ and it is a zero of $Q_{|\textup{\textbf{m}}|},$ or $\rho_{0}(Q_{|\textup{\textbf{m}}|}g_{1,j})>\rho_{1}(g_{1,j}).$ In particular, $D_{\rho_0(\textup{\textbf{g}}_1)}\not=\mathbb{C}$ and all the singularities of $\textup{\textbf{g}}_1$ on the boundary of $D_{\rho_0(\textup{\textbf{g}}_1)}$ are poles which are zeros of $Q_{|\textup{\textbf{m}}|}$ counting their order. They form the next layer of candidate system poles of $\overline{\textup{\textbf{F}}}.$

 Denote by $a_{n_1+1},\ldots,a_{n_1+n_2}$ these new candidate system poles. Again, if $n_1+n_2=|\textup{\textbf{m}}|,$ we are done. Otherwise, $n_2<|\textup{\textbf{m}}|-n_1\leq |\textup{\textbf{m}}|-n_{1}^{*},$ and we keep repeating the same process by eliminating the $n_2$ poles $a_{n_1+1},\ldots,a_{n_1+n_2}.$ In order to do that, we have $|\textup{\textbf{m}}|-n_{1}^{*}$ functions which are holomorphic on $D_{\rho_0(\textup{\textbf{g}}_1)}$ and meromorphic on a neighborhood of $\overline{D}_{\rho_{0}(\textup{\textbf{g}}_1)}.$ The corresponding homogeneous system of linear equations, similar to \eqref{eq21}, has at least $|\textup{\textbf{m}}|-n_1^{*}-n_2$ linearly independent solutions $\textup{\textbf{c}}_j^2,$ $j=1,\ldots,|\textup{\textbf{m}}|-n_1^{*}-n_2^{*},$ where $n_2^{*}\leq n_2$ is the rank of the new system. Let
$$\textup{\textbf{c}}_{j}^{2}:=(c_{j,1}^2,\ldots, c_{j,|{\textup{\textbf{m}}}|}^{2}), \quad \quad j=1,\ldots,  |{\textup{\textbf{m}}}|-n_1^{*}-n_2^{*}.$$ Define the $(|{\textup{\textbf{m}}}|-n_1^{*}-n_2^{*})\times (|{\textup{\textbf{m}}}|-n_1^{*})$ dimensional matrix
$$C^2:= \begin{pmatrix}
 \textup{\textbf{c}}_{1}^{2}  \\
  \vdots   \\
\textup{\textbf{c}}_{|{\textup{\textbf{m}}}|-n_1^{*}-n_2^{*}}^{2}
 \end{pmatrix}.$$
 Define the vector  $\textup{\textbf{g}}_2$ of  $|{\textup{\textbf{m}}}|-n_1^{*}-n_2^{*}$ functions given by
 $$\textup{\textbf{g}}_2^{t}:=C^2\textup{\textbf{g}}_1^t=C^2C^1 \overline{\textup{\textbf{F}}}^t=(g_{2,1},\ldots,g_{2, |{\textup{\textbf{m}}}|-n_1^{*}-n_2^{*}})^{t}.$$
It is a basic fact from linear algebra that if $C_1$ has full rank and $C_2$ has  full rank, then $C_2 C_1$ has full rank. This means that the rows of $C_2 C_1$ are linearly independent, particularly, they are non-null. Therefore, none of the component functions of $\textup{\textbf{g}}_2$ are polynomials because of the polynomial independence of $\overline{\textup{\textbf{F}}}$ with respect to $\overline{\textup{\textbf{m}}}.$ Thus, we can apply again Lemma \ref{mainlemma}. Using finite induction, we find  a total on $|\overline{\textup{\textbf{m}}}|$ candidate system poles.

In fact, on each layer of system poles, $ n_i\geq 1.$ Therefore, in a finite number of steps, say $N-1,$ their sum equals $|\textup{\textbf{m}}|.$   Consequently, the number of candidate system poles of $\overline{\textup{\textbf{F}}}$ in some disk, counting multiplicities, is exactly equal to $|\textup{\textbf{m}}|,$ and they are precisely the zeros of $Q_{|\textup{\textbf{m}}|}$ as we wanted to prove.

Summarizing, in the $N-1$ steps we have taken, we have produced $N$ layers of candidate system poles. Each layer contains $n_k$ candidates, $k=1,\ldots,N.$ At the same time, on each step $k,$ $k=1,\ldots,N-1,$ we have solved a system of $n_k$  linear equations, of rank $n_k^{*},$ with $|\textup{\textbf{m}}|-n_1^{*}-\cdots-n_k^{*},$ $n_k^{*}\leq n_k,$ linearly independent solutions. We find ourselves on the $N$-th layer with $n_N$ candidates.

Let us try to eliminate the poles on the last layer. Write the corresponding homogeneous system of linear equations as in \eqref{eq21}, and we get $n_N$ equations where
$$n_N=|\textup{\textbf{m}}|-n_1-\cdots-n_{N-1}\leq |\textup{\textbf{m}}|-n_1^{*}-\cdots-n_{N-1}^{*}=:\overline{n}_N$$
 with $\overline{n}_N$ unknowns. For each candidate system pole $a$ of multiplicity $\tau$ on the $N$-th layer, we impose the equations
\begin{equation}\label{eq22}
\int_{|w-a|=\delta} (w-a)^i\left(\sum_{k=1}^{\overline{n}_N} c_k g_{N-1,k}(w) \right)dw=0,\quad \quad i=0,\ldots,\tau-1,
\end{equation}
where $\delta$ is sufficiently small and the $g_{N-1,k},$ $k=1,\ldots,\overline{n}_N,$ are the functions associated with the linearly independent solutions produced on step $N-1.$

Let $n_N^{*}$ be the rank of this last homogeneous system of linear equations. Assume that $n_k^{*}<n_k$ for some $k\in \{1,\ldots,N\}.$ Then, the rank of the last system of equations is strictly less than the number of unknowns, namely $n_N^{*}<\overline{n}_N.$ Therefore, repeating the same process, there exists a vector of functions $$\textup{\textbf{g}}_N:=(g_{N,1},\ldots, g_{N,|\textup{\textbf{m}}|-n_1^{*}-\cdots-n_{N}^{*}})$$ such that none of the $g_{N,k}$ is a polynomial because of the polynomial independence of $\overline{\textup{\textbf{F}}}$ with respect to $\overline{\textup{\textbf{m}}}.$ Applying Lemma \ref{mainlemma}, each $g_{N,k}$ has on the boundary of its disk of analyticity a pole which is a zero of $Q_{|\textup{\textbf{m}}|}.$ However, this is impossible because all the zeros of $Q_{|\textup{\textbf{m}}|}$ are strictly contained in that disk. Consequently, $n_k=n_k^{*},$ $k=1,\ldots,N.$

We conclude that all the $N$ homogeneous systems of linear equations that we have solved  have full rank. This implies that if in any one of those $N$ systems of equations we equate one of its equations to $1$ instead of zero (see \eqref{eq21} or \eqref{eq22}), the corresponding nonhomogeneous system of linear equations has a solution. By the definition of a system pole, this implies that each candidate system pole is indeed a system pole of order at least equal to its multiplicity as zero of $Q_{|\textup{\textbf{m}}|}.$ Moreover, by Lemma \ref{numberof}, $\overline{\textup{\textbf{F}}}$ can have at most $|\textup{\textbf{m}}|$ system poles with respect to $\overline{\textup{\textbf{m}}}$; therefore, all candidate system poles are system poles, and their order coincides with the multiplicity of that point as a zero of $Q_{|\textup{\textbf{m}}|}.$ This also means that $Q_{|\textup{\textbf{m}}|}=Q_{\textup{\textbf{m}}}^{\textup{\textbf{F}}}.$ We have completed the proof.
\end{proof}

\begin{remark} The results of this paper remain valid when $E$ is a compact set whose complement is connected, provided that the sequence of orthonormal polynomials $(p_n), n\geq 0,$ and second type functions $(s_n),n\geq 0,$ relative to the measure $\mu, \mbox{supp}(\mu) \subset E,$ satisfy \eqref{asintlog} and \eqref{asintlog2}, respectively, inside $\mathbb{C} \setminus E$, and the sequence of leading coefficients $(\kappa_n), n\geq 0,$ fulfill \eqref{below}. On the right hand side of  \eqref{asintlog} and \eqref{asintlog2} one should place $e^{g(z,\infty)},$ where $g(z,\infty)$ denotes Green's function relative to the region $\mathbb{C} \setminus E$. The problem with stating the results with this degree of generality is related with the zeros that the second type functions $s_n$ may have in $\mbox{\rm Co}(E) \setminus E$. For example, if $E$ is made up of two intervals symmetric with respect to the origin and $\mu$ is any measure supported on $E$ symmetric with respect to the origin then  $s_n$ has a zeros at $z=0$ for all even $n$. In this case, no matter how good the measure is, there are problems in proving $\eqref{3.31}$ at $\xi=0$ if this point happens to be a system pole. In this example, this can be avoided requiring that $0$ is not a system pole of ${\bf F}$. But, in a more general configuration, this is hard to guarantee in terms of the data since the zeros of $s_n$ in  $\mbox{\rm Co}(E) \setminus E$ may have a quite exotic behavior.
\end{remark}

\noindent Nattapong Bosuwan\\
         Department of Mathematics,
         Mahidol University\\
         Rama VI Road, Ratchathewi District,\\
         Bangkok 10400, Thailand\\
and \\
 \noindent Centre of Excellence in Mathematics, CHE,\\
                 Si Ayutthaya Road,\\
                 Bangkok 10400, Thailand\\
\noindent email: nattapong.bos@mahidol.ac.th,\\

\noindent\\
Guillermo L\'opez Lagomasino \\
Mathematics Department,\\
Universidad Carlos III de Madrid, \\
c/ Universidad, 30\\
28911, Legan\'es, Spain \\
email: lago@math.uc3m.es


\begin{thebibliography}{20}

\bibitem{Be} M. Bello Hern\'andez and G. L\'opez Lagomasino.  Ratio and relative asymptotics of polynomials orthogonal on an arc of the unit circle. J. Approx. Theory \textbf{92} (1998), 216--244.

\bibitem{bosuwan} N. Bosuwan, G. L\'opez Lagomasino and E.B. Saff. Determining singularities using row sequences of Pad\'e-orthogonal approximants. Jaen J. Approx. \textbf{5} (2013),  179-208.



 \bibitem{bosuwan1} N. Bosuwan and G. L\'opez Lagomasino. Inverse theorem on row sequences of linear Pad\'e-orthogonal approximants. Comput. Methods Funct. Theory \textbf{15} (2015), 529-554.

\bibitem{CacoqLopez} J. Cacoq and G. L\'opez Lagomasino. Convergence of row sequences of simultaneous Fourier-Pad\'e approximation. Jaen J. Approx. \textbf{4} (2012), 101-120.

\bibitem{CacoqYsernLopezIncom}  J. Cacoq, B. de la Calle Ysern, and G. L\'opez Lagomasino. Incomplete Pad\'e approximation and convergence of row sequences of Hermite-Pad\'e approximants.  J. Approx. Theory \textbf{170} (2013), 59-77.

\bibitem{CacoqYsernLopez}
J. Cacoq, B. de la Calle Ysern, and G. L\'opez Lagomasino. Direct and inverse results on row sequences of Hermite-Pad\'e  approximants. Constr. Approx. \textbf{38} (2013), 133-160.


\bibitem{Gol} G.M. Goluzin. Geometric Theory of Functions of a Complex Variable. Transl. of Math. Monographs Vol. 26, Amer. Math. Soc., Providence, R.I. 1969.

\bibitem{Gonchar1} A.A. Gonchar. On the convergence of generalized Pad\'e approximants of meromorphic functions. Math. USSR Sb. \textbf{140} (1975), 564-577.

\bibitem{Aag81} A.A. Gonchar. Poles of rows of the Pad\'e table and meromorphic continuation of functions. Sb. Math. \textbf{43} (1981), 527-546.

\bibitem{MorrisSaff} P.R. Graves-Morris and E.B. Saff. A de Montessus theorem for vector-valued rational interpolants. Lecture Notes in Math. 1105, Springer, Berlin, 1984, 227-242.

\bibitem{GS2} P.R. Graves-Morris and E.B. Saff. Row convergence
theorems for generalized inverse vector-valued Pad\'e approximants.
J. Comp. Appl. Math. \textbf{23} (1988), 63--85.

\bibitem{GS3} P.R. Graves-Morris and E.B. Saff. An extension of a
row convergence theorem for vector Pad\'e approximants. J. Comp.
Appl. Math. \textbf{34} (1991), 315--324.


\bibitem{Her}  Ch. Hermite.  Sur la fonction exponentielle. C. R. Acad. Sci. Paris  77 (1873), 18-24, 74-79, 226-233, 285-293; reprinted in his Oeuvres, Tome III, Gauthier-Villars, Paris, 1912, 150-181.

\bibitem{Mah} K. Mahler. Perfect systems. Compos. Math.  \textbf{19} (1968), 95-166.

\bibitem{Mon} R. de Montessus de Ballore. Sur les fractions continues
alg\'ebriques. Bull. Soc. Math. France \textbf{30} (1902), 28--36.

\bibitem{Rakhmanov} E.A. Rakhmanov. On the asymptotics of the ratio of orthogonal polynomials II. Math. USSR Sb. \textbf{46} (1983), 105--117.

\bibitem{Rakhmanov2} E.A. Rakhmanov. On asymptotic properties of polynomials orthogonal on the unit circle with weights not satisfying Szeg\H{o}'s consition. Math. USSR Sbornik \textbf{172} (1986), 149--167.

\bibitem{Brian} B. Simanek. Ratio Asymptotics, Hessenberg matrices, and weak asymptotic measures. Int. Math. Res. Not. IMRN \textbf{24} (2014), 6798--6825.


\bibitem{Sutinpade} S.P. Suetin. On the convergence of rational approximations to polynomial expansions in domains of meromorphy of a given function. Math USSR Sb. \textbf{34} (1978), 367-381.



\bibitem{totik} H. Stahl and V. Totik. General Orthogonal Polynomials.  Cambridge University Press Vol. 43, Cambridge, 1992.

\bibitem{Widom1969} H. Widom.  Extremal polynomials associated with a system of curves in the complex plane. Adv. Math., \textbf{3} (1969), 127--232.




\end{thebibliography}
\end{document}